\documentclass{amsart}
\usepackage{graphicx,amsmath,amssymb,curves,color}
\usepackage[pdftex]{hyperref}
\vfuzz2pt 
\hfuzz2pt 

\begin{document}

\newtheorem{thm}{Theorem}[section]
\newtheorem{cor}[thm]{Corollary}
\newtheorem{lem}[thm]{Lemma}
\newtheorem{prop}[thm]{Proposition}
\renewcommand\[{\begin{equation}}
\renewcommand\]{\end{equation}}

\theoremstyle{remark}
\newtheorem{notations}[thm]{Notations}
\newtheorem{notation}[thm]{Notation}

\theoremstyle{definition}
\newtheorem{defn}[thm]{Definition}
\newtheorem{defns}[thm]{Definitions}

\theoremstyle{remark}
\newtheorem{rem}[thm]{Remark}
\newtheorem{eg}[thm]{Example}
\newtheorem{ob}[thm]{Observation}
\numberwithin{equation}{section}
\newcommand{\norm}[1]{\left\Vert#1\right\Vert}
\newcommand{\abs}[1]{\left\vert#1\right\vert}
\newcommand{\set}[1]{\left\{#1\right\}}
\newcommand{\Real}{\mathbb R}
\newcommand{\C}{\mathbb{C}}
\newcommand{\Rational}{\mathbb{Q}}
\newcommand{\Pro}{\mathbb{P}}
\newcommand{\eps}{\varepsilon}
\newcommand{\To}{\longrightarrow}
\newcommand{\TTo}{\dashrightarrow}
\newcommand{\BX}{\mathbf{B}(X)}
\newcommand{\Z}{\mathbb{Z}}
\newcommand{\K}{\mathbb{K}}
\newcommand{\tb}{\textbf}
\newcommand{\mc}{\mathcal}
\newcommand{\mr}{\mathrm}
\newcommand{\mf}{\mathbf}
\newcommand{\mb}{\mathbb}
\newcommand{\ta}{\theta}
\newcommand{\al}{\alpha}
\newcommand{\be}{\beta}
\newcommand{\TC}{(\mathbb{C}^*)}
\newcommand{\TK}{(\mathbb{K}^*)}
\newcommand{\Sev}{\mathrm{Sev}(\Delta,\delta )}
\newcommand{\TS}{\mathrm{Trop}(\mathrm{Sev})}
\newcommand{\p}{\phi}
\newcommand{\w}{\omega}
\newcommand{\n}{\mathrm{in}_}
\newcommand{\D}{\Delta}
\newcommand{\Tr}{\mathrm{Trop}}
\newcommand{\bs}{\boldsymbol}
\newcommand{\bsm}{\boldsymbol{m}}
\newcommand{\SD}{\mathcal{S}(\Delta)}
\newcommand{\TSD}{\mathcal{T}\mathcal{S}(\Delta)}
\newcommand{\T}{\mathbb{T}}

\title[]{Tropical Severi Varieties}%

\author{Jihyeon Jessie Yang}
\address{Department of Mathematics and
Statistics\\ McMaster University\\ 1280 Main
Street West\\ Hamilton, Ontario L8S4K1\\ Canada}
\email{jyang@math.mcmaster.ca}

\thanks{We would like to thank the referees for careful reading of the manuscript
and giving numerous helpful suggestions.
}

\begin{abstract}
We study the tropicalizations of Severi
varieties, which we call {\em tropical Severi
varieties}.  In this paper, we give a partial
answer to the following question,  ``describe the
tropical Severi varieties explicitly.'' We obtain
a description of tropical Severi varieties in
terms of regular subdivisions of polygons. As an
intermediate step,  we construct  explicit
parameter spaces of  curves. These parameter
spaces are much simpler objects than the
corresponding Severi variety and they are closely
related to flat degenerations of the Severi
variety, which in turn describes the tropical
Severi variety. As an application, we understand
G.Mikhalkin's correspondence theorem for the
degrees of Severi varieties in terms of  tropical
intersection theory. In particular, this provides
a proof of the independence of
point-configurations in the enumeration of
tropical nodal curves.
\end{abstract}

\maketitle

\section{Introduction}

The advent of tropical geometry and its fast
development suggest to look at the classical
algebraic geometry in a different perspective.
Tropicalization is an operation that turns
subvarieties of an algebraic torus  into
polyhedral objects in  a real vector space  along
with a locally-constant integral-valued function
on it.  This procedure enables us to build an
intersection theory on the algebraic torus called
\emph{tropical intersection theory} which can be
used to solve classical enumerative questions.
There are many attractive properties of this
intersection theory. First of all, we work with
polyhedral objects instead of algebraic
varieties. Also we sometimes do not need to
consider compactifications.
The case of hypersurfaces is  closely related to
Newton polytope theory: the tropicalization of a
hypersurface defined by a Laurent polynomial $f$
is the codimension $1$ skeleton of the
outer-normal fan of
 the Newton polytope of $f$. Moreover, the locally-constant
integral-valued   function on it  is determined
by the lattice lengths of the edges of the Newton
polytope of $f$.
Using tropical geometry, G. Mikhalkin\cite{Mikhalkin}
found a purely combinatorial method to compute
the degree of a Severi variety (or Gromov-Witten
invariants of plane $\Pro^2$). This celebrated
work, \emph{correspondence theorem}
(Theorem \ref{thm:correspondence}),  has brought tropical
methods to the attention of geometers and
motivated active systematic developments of
tropical geometry.
A Severi variety $\Sev$ (Definition \ref{defn:Severi}) is a complex projective
variety which parameterizes  curves on the toric
surface $X_{\D}$ of an integral polygon $\D$
with a given number $\delta$ of nodal singularities. It is
known that the degree of a Severi variety is
equal to the number of such nodal curves passing
through a certain number of generic points in
$\C^2$. Mikhalkin's correspondence theorem,
simply speaking, says that this enumerative
number is equal to the number of \emph{tropical}
plane curves passing through the same number of
generic points in $\Real^2$  counted with certain
multiplicities, called Mikhalkin's multiplicity
(\S\ref{sec:Severi degree}).  These tropical plane
curves are tropicalizations of classical
algebraic curves and they are purely
combinatorial objects. Therefore, the enumerative
problem becomes a purely combinatorial one.
 This approach  brings up some questions such as, ``why does
this count give the solution for the original
classical problem?'' and ``what is the meaning of
such multiplicities assigned to tropical
curves?''. The answers for these questions are
given in the proof of Mikhalkin's correspondence
theorem \cite{Mikhalkin} or Shustin's proof based
on algebraic geometry \cite{Shustin2}. However,
their proofs on the independence of
point-configuration relies on the well-known fact
 that the number of  {\em classical} nodal curves
 does not depend on the position of points, which is
 a
fact from classical intersection theory. Another
proof of the independence of point-configuration
 is
proved in \cite{GM} by considering  moduli spaces
of tropical curves.

In this paper,  we take another approach: we
study the tropicalization of a
 Severi variety $\Tr(\Sev)$ and understand the enumeration
of nodal curves in terms of tropical intersection
theory. The subset $\Tr(\Sev)$ of a  real vector
space has a   natural weighted  fan  structure
induced from the Gr\"{o}bner fan  of $\Sev$. The
main ingredients of the definition of $\Tr(\Sev)$
are the initial schemes $\n\w\Sev$ of the very
affine Severi variety, $\Sev^{\circ}$, which is
the intersection of the Severi variety $\Sev$ and
the big open torus of the ambient projective
space of $\Sev$. When $\w$ is a {\em regular}
point of $\Tr(\Sev)$, that is, it is in the
relative interior of a maximal cone of
$\Tr(\Sev)$, we know that the initial scheme
$\n\w\Sev$ is supported on the union of finitely
many translates of a subtorus of the big torus
(for example, see \cite[\S6.]{Katz2}). The number
of such translations of a subtorus (counted with
multiplicities) is called the {\em weight} of
$\w$.
 The first simple description of (the support of) $\Tr(\Sev)$ is
given by  a positive integer $\mr{rank}(\w)$
(Definition \ref{defn:rank}) assigned to every
point in the ambient real vector space of
$\Tr(\Sev)$:

\begin{thm}[Theorem \ref{thm:support}]
If an integral point $\w$ has $\mr{rank}(\w) >
\mr{dim}(\Sev)$, then $\w$ is not in $\Tr(\Sev)$.
\end{thm}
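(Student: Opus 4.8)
The plan is to show that if $\mathrm{rank}(\w) > \dim(\Sev)$, then $\w$ cannot lie in $\Tr(\Sev)$. Since $\Tr(\Sev)$ is the tropicalization of the very affine variety $\Sev^\circ$, a point $\w$ is in $\Tr(\Sev)$ iff the initial ideal $\mathrm{in}_\w(\Sev)$ does not contain a monomial, equivalently iff the initial scheme $\mathrm{in}_\w\Sev$ is nonempty (in the torus). The core strategy is therefore to connect $\mathrm{rank}(\w)$ to a dimension bound. I would first recall the definition of $\mathrm{rank}(\w)$ (Definition \ref{defn:rank}) and its combinatorial meaning; presumably $\mathrm{rank}(\w)$ counts something like the number of independent linear conditions that the weight vector $\w$ imposes, or equivalently the dimension of a certain linear space cut out by the induced regular subdivision of $\Delta$. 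The key principle to exploit is that tropicalization preserves dimension: $\dim \Tr(\Sev) = \dim \Sev$, and every cone of $\Tr(\Sev)$ containing $\w$ has dimension at most $\dim(\Sev)$.

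The main steps, in order, are as follows. First, I would translate membership $\w \in \Tr(\Sev)$ into the nonvanishing of the initial scheme, so that the goal becomes: a high rank forces $\mathrm{in}_\w\Sev$ to be empty. Second, I would analyze the structure of $\mathrm{in}_\w\Sev$ using the parameter spaces of curves constructed in the paper (the ``much simpler objects'' advertised in the abstract): the weight $\w$ induces a regular subdivision of $\Delta$, and the initial degeneration should be governed by the combinatorics of this subdivision together with the nodal conditions defining $\Sev$. Third, I would argue that $\mathrm{rank}(\w)$ is precisely an upper bound (or exact count) for the dimension of the relevant piece of the initial degeneration, so that when $\mathrm{rank}(\w) > \dim(\Sev)$ the dimension count becomes contradictory: the fiber or stratum indexed by $\w$ would need dimension exceeding $\dim(\Sev)$ while being constrained to lie inside an object of dimension $\dim(\Sev)$. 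Concretely, I expect $\mathrm{rank}(\w)$ to measure the number of free parameters (e.g.\ the count of lattice points or cells imposing independent constraints), and the contradiction arises from $\dim \Tr(\Sev) = \dim \Sev$.

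The hard part will be the second step: making the passage from the abstract initial scheme $\mathrm{in}_\w\Sev$ to an explicit combinatorial count that identifies with $\mathrm{rank}(\w)$. This requires understanding exactly how the nodal (tangency/singularity) conditions defining $\Sev$ degenerate under the weight $\w$, and showing that a large rank genuinely forces the degenerate system to be overdetermined rather than merely appearing so. I would handle this by leveraging the explicit parameter spaces and their flat degenerations alluded to in the introduction, which are designed precisely to make such dimension counts transparent. A subtle point to guard against is that $\mathrm{in}_\w\Sev$ could be nonempty for accidental reasons (e.g.\ embedded components or non-reduced structure) even when the naive parameter count suggests emptiness; so I would phrase the dimension bound carefully at the level of the support of the scheme, which is all that is needed since membership in $\Tr(\Sev)$ only detects the support. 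Once the identification of $\mathrm{rank}(\w)$ with the appropriate dimension bound is established, the conclusion is immediate from $\dim \Tr(\Sev) = \dim \Sev$.
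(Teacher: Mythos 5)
Your opening reduction---$\w\in\Tr(\Sev)$ iff the initial scheme $\n\w\Sev$ is nonempty---matches the paper, but the engine of your argument, a dimension count, has a genuine gap, and in fact the inequalities such counts produce point in the wrong direction. Concretely: if $\n\w\Sev\ne\emptyset$ then, being a flat degeneration of $\Sev^{\circ}$, it has dimension exactly $\mr{dim}(\Sev)$; and the intermediate parameter space $\mb{V}_{\partial\D_{\w},nodal}$ that would contain it (when $\D_{\w}$ is even nodal, so that this space is defined, and when the containment can be established) has dimension $|\mr{Vertices}(\D_{\w})|-1-|\mr{Parallelograms}(\D_{\w})|=\mr{rank}(\w)$. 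So the containment only yields $\mr{dim}(\Sev)\le\mr{rank}(\w)$, which is perfectly consistent with $\mr{rank}(\w)>\mr{dim}(\Sev)$: no contradiction. Under the other reading of your ``stratum'' argument---that $\Tr(\Sev)$ would have to contain through $\w$ a polyhedron of dimension $\ge\mr{rank}(\w)$ (a lift of the tropical cone $\mc{T}C(\D_{\w})$), contradicting $\mr{dim}\Tr(\Sev)=\mr{dim}(\Sev)$---you would need to show that every nearby weight vector inducing the same subdivision also lies in $\Tr(\Sev)$. That is a realizability statement: each such vector must be lifted to a $\delta$-nodal curve over $\K$, which is exactly Shustin's patchworking (\S\ref{sec:Patchworking}), whose transversality hypotheses include $\mr{rank}=\mr{dim}(\Sev)$; the argument would be circular, and the lifting is in fact impossible in precisely the regime $\mr{rank}(\w)>\mr{dim}(\Sev)$ that you need it.

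The missing ingredient is geometric rather than dimension-theoretic. The paper's proof is short because it invokes Shustin's characterization (Theorem \ref{thm:Shustin's characterization}, with the base-point condition replaced by the rank condition as justified in \S\ref{sec:nodal curves}): if $\w\in\Tr(\Sev)$, then by Lemma \ref{lem:geometric} there is an actual $\delta$-nodal curve $f$ over $\K$ whose coefficient valuations are $\w$, and Shustin's theorem says that for such a curve the hypothesis $\mr{rank}(\D_f)\ge r=\mr{dim}(\Sev)$ forces $\mr{rank}(\D_f)=r$ exactly (together with $\D_f$ being simple and nodal). The proof of that theorem compares an upper bound for the Euler characteristic $\check{\chi}(C^{(t)})$ of the normalization of the degenerating curve, valid for any flat deformation, against the lower bound $\check{\chi}(C^{(t)})=2-2g=2|\partial\D\cap\Z^2|-2r\ge 2|\partial\D\cap\Z^2|-2\,\mr{rank}(\D_f)$ coming from $g=|\mr{Int}(\D)\cap\Z^2|-\delta$. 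It is this genus/Euler-characteristic constraint---a restriction on which tropical curves are realizable by nodal curves---that excludes $\mr{rank}(\w)>\mr{dim}(\Sev)$; no count of dimensions of the parameter spaces $\mb{V}_{\bullet}$ or of flat degenerations can substitute for it.
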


The following two descriptions of $\Tr(\Sev)$
uses the regular subdivisions of polygons (\S
\ref{sec:subdivision}).

\begin{thm}[Theorem \ref{thm:No nonprimitive}]
Let $\w$ be an integral point in  $\Tr(\Sev)$
with  the following conditions:
\begin{itemize}
\item $\mr{rank}(\w)=\mr{dim}(\Sev)$;
\item The regular subdivision $\D_{\w}$ has no non-primitive parallelogram.
\end{itemize}
Then $\w$ is a regular point of $\Tr(\Sev)$, that
is, $\w$ is in a maximal cone of $\Tr(\Sev)$.
Furthermore, the weight of $\w$ on $\Tr(\Sev)$ is
equal to

\[\bsm_{\Sev}(\w)=l(\mb{V})\cdot
\widetilde{\prod}\mr{length(\mr{Edges}(\D_{\w}))},\]
where
\begin{enumerate}
\item $l(\mb{V})$ is the number of connected components of $\mb{V}$;
\item $\widetilde{\prod}\mr{length(\mr{Edges}(\D_{\w}))}$ is the product of the
lattice lengths of the  edges  which are
representatives of each equivalence class in
$\mr{Edges}(\D_{\w})$, where we define an equivalence
relation as follows: let $e\sim e'$ if $e$ and
$e'$ are the parallel edges of a parallelogram in
$\D_{\w}$ and extend it by transitivity.
\end{enumerate}

\end{thm}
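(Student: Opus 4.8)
The plan is to prove this theorem by constructing an explicit flat degeneration of the Severi variety $\Sev$ associated to the weight vector $\w$, and then analyzing the resulting initial scheme $\n\w\Sev$ directly. Let me think about how the combinatorics of the regular subdivision $\D_\w$ should control the geometry of this initial scheme.

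First, let me recall the setup. A point on the Severi variety corresponds to a $\delta$-nodal curve on the toric surface $X_\D$. Such a curve is defined by a Laurent polynomial $f = \sum_{\bs{a} \in \D \cap \Z^2} c_{\bs{a}} x^{\bs{a}}$ with the constraint that it has $\delta$ nodes. The ambient projective space has coordinates indexed by lattice points of $\D$, and the weight $\w$ assigns a value to each such coordinate. The regular subdivision $\D_\w$ is the one induced by lifting each lattice point $\bs{a}$ to height $\w_{\bs{a}}$ and taking the lower hull. So the combinatorial data of $\D_\w$ — its cells, and in particular whether those cells are triangles or parallelograms — should encode the structure of the degenerate curves.

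The condition $\mr{rank}(\w) = \mr{dim}(\Sev)$ presumably forces the subdivision $\D_\w$ to be as fine as possible subject to the nodal constraints: the cells of $\D_\w$ should be exactly triangles (contributing to the genus/smooth part of the tropical curve) and parallelograms (which correspond tropically to the nodes). The hypothesis that there are "no non-primitive parallelograms" is the key simplifying assumption — each parallelogram cell is primitive, meaning it corresponds to a single transverse node in the most generic possible way. This should be what guarantees $\w$ is a regular point: the initial scheme will be a union of translates of a single subtorus, with no higher-multiplicity degeneracies arising from thick (non-primitive) parallelograms.

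So here is the strategy I would follow.

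\emph{Step 1.} Using the earlier constructed parameter spaces $\mathbf{B}(X)$ and the combinatorial object $\mathbf{V}$ (which I take to be the set of vertices, or the incidence structure, of the subdivision $\D_\w$), I would identify $\n\w\Sev$ explicitly with a toric or binomial scheme whose components are indexed by the combinatorial choices recorded in $\mathbf{V}$. The connected components of $\mathbf{V}$ should be in bijection with the irreducible components of the initial scheme, which immediately suggests where the factor $l(\mathbf{V})$ comes from.

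\emph{Step 2.} I would compute the multiplicity (weight) of $\w$ on $\Tr(\Sev)$ as the lattice index of the sublattice spanned by the exponent differences appearing in the binomial equations defining $\n\w\Sev$. Because $\Tr(\Sev)$ is a tropical variety, its weight at a regular point equals the sum over the torus-translates in $\n\w\Sev$, counted with their scheme-theoretic multiplicities. The combinatorial translation is that each edge $e$ of $\D_\w$ contributes its lattice length, but parallel edges of a parallelogram are identified because they impose the \emph{same} binomial relation (the node forces the coefficients along the two parallel edges to be proportional). This is exactly why the product runs over equivalence classes of edges rather than all edges, and why we take one representative per class — hence the factor $\widetilde{\prod}\mr{length}(\mr{Edges}(\D_\w))$.

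\emph{Step 3.} Finally I would verify regularity of $\w$: I must check that $\n\w\Sev$ is indeed supported on a union of translates of a single subtorus (so that $\w$ lies in the relative interior of a maximal cone), and that its dimension matches $\mr{dim}(\Sev)$. The rank condition $\mr{rank}(\w) = \mr{dim}(\Sev)$ should give the dimension count, while the primitivity of all parallelograms should rule out embedded or non-reduced structure transverse to the subtorus, ensuring the support is genuinely a \emph{disjoint} union of translates with the claimed multiplicities.

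The main obstacle I anticipate is \emph{Step 2}, specifically proving that the scheme-theoretic multiplicity of each torus-translate equals precisely the product of lattice lengths over the edge equivalence classes, with no over- or under-counting. The subtlety is the edge-identification coming from parallelograms: I must show rigorously that a parallelogram cell does \emph{not} contribute the lattice lengths of both of its parallel edge-pairs independently, but rather that the nodal condition collapses each pair into a single constraint. Making this precise requires a careful local analysis of the binomial ideal near each parallelogram, showing that the node forces a proportionality relation that identifies the two initial forms along parallel edges. The primitivity hypothesis is what makes this local analysis clean — for non-primitive parallelograms the proportionality would come with extra lattice-index contributions, which is presumably why those are excluded here and treated separately elsewhere in the paper.
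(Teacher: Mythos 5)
There is a genuine gap, and it sits exactly where you predicted it would: Step 2. Before that, a misidentification: $\mb{V}$ is not ``the set of vertices, or the incidence structure, of $\D_{\w}$'' --- it is the intermediate parameter space $\mb{V}_{\partial\D_{\w},nodal}$ constructed in \S\ref{sec:SD,nodal}, an algebraic subvariety of the torus $\T_{\mc{A}}$ which is a finite union of translates of the subtorus $\mb{G}^e$, and $l(\mb{V})$ is the number of connected components of that variety. Moreover, the set-theoretic identification $\n\w\Sev=\mb{V}$ that your Step 1 asserts needs a mechanism for both inclusions: the paper gets $\subset$ by lifting closed points of the initial scheme to $\K$-points of $\Sev$ (Lemma \ref{lem:geometric}) and then invoking Shustin's geometric characterization of tropicalizations of nodal curves (Theorem \ref{thm:Shustin's characterization}), and gets $\supset$ from Shustin's patchworking theory (\S\ref{sec:Patchworking}). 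Neither inclusion is a formal consequence of ``binomial structure''; both rest on nontrivial facts about nodal curves over $\K$.

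The fatal problem is your proposed computation of the weight as ``the lattice index of the sublattice spanned by the exponent differences appearing in the binomial equations defining $\n\w\Sev$.'' That lattice index computes the number of connected components of the reduced support, i.e.\ the factor $l(\mb{V})$ (via the Smith normal form of the matrix $M$, as in the remark after Theorem \ref{thm:SD,rational}); it cannot see the other factor $\widetilde{\prod}\mr{length(\mr{Edges}(\D_{\w}))}$, which is the scheme-theoretic multiplicity of $\n\w I_{\Sev}$ along each component. That multiplicity is invisible from the support: the initial ideal of the Severi variety is not presented to you as a binomial ideal, and no local analysis of the binomials cutting out $\mb{V}$ can recover it. The paper's own Example 4.3 shows this concretely: there the support of $\n\w Sev(\D,1)$ is the irreducible torus $V(4bd-c^2)$ (lattice index $1$, $l(\mb{V})=1$, no parallelograms at all), yet the weight is $2$ because $\n\w Sev(\D,1)=V((4bd-c^2)^2)$ is non-reduced --- the $2$ comes from an interior \emph{edge} of lattice length $2$, not from any parallelogram, so your heuristic locating the subtlety in the parallelogram cells also misplaces where the multiplicity arises (the edge-equivalence relation only corrects the count of points of $\mb{V}$, per Remark \ref{rem:simple-nodal}). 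What is actually needed is a count of how many $\K$-points of $\Sev$ degenerate to each point of $\mb{V}$, and the paper obtains this by playing Shustin's two patchworking enumerations against each other: with vertex coefficients fixed, there are $\prod 2\mr{area}(\mr{Triangles})\,/\,\widetilde{\prod}\mr{length}$ points of $\mb{V}$ but $\prod 2\mr{area}(\mr{Triangles})$ points of $\Sev(\K)$; recasting both counts as tropical intersection numbers against the coordinate subspace $\mb{L}_{\mc{B}}$ and dividing yields $\bsm_{\Sev}(\w)=l(\mb{V})\cdot\widetilde{\prod}\mr{length}$. Without this input (or an equivalent way to count branches of the degeneration), your outline cannot produce the multiplicity factor, and hence cannot prove the formula.
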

\begin{thm}[Theorem \ref{thm:Existence of
nonprimitive}]
Let $\w$ be an integral point in  $\Tr(\Sev)$with
the following conditions:
\begin{itemize}
\item $\mr{rank}(\w)=\mr{dim}(\Sev)$ ;
\item $\w$ is a regular point in $\Tr(\Sev)$.
\end{itemize}
Then the weight of $\w$ on $\Tr(\Sev)$ is equal
to
\[\bsm_{\Sev}(\w)=l(\mb{V})\cdot
\widetilde{\prod}\mr{length(\mr{Edges}(\D_{\w}))},\]
as defined in the previous theorem.
\end{thm}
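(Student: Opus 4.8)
The plan is to follow the same degenerate-and-count strategy that underlies Theorem~\ref{thm:No nonprimitive}, but to push the lattice computation through without the simplifying hypothesis on parallelograms. Since $\w$ is now assumed to be a regular point, the initial scheme $\n_\w\Sev$ is automatically supported on finitely many translates of a single subtorus, and $\bsm_{\Sev}(\w)$ is by definition the number of these translates counted with multiplicity. The no-non-primitive-parallelogram hypothesis of the previous theorem entered its proof in two places: first to force regularity, and second to make the lattice index of the subtorus collapse to a product of primitive edge-lengths. Here the first role is supplied by hypothesis, so the entire content of the theorem is the index computation in the presence of non-primitive parallelograms.

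First I would recall the explicit description of $\n_\w\Sev$ coming from the parameter space of curves built earlier in the paper. The regular subdivision $\D_\w$ presents a degenerate curve on $X_\D$ whose components are indexed by the maximal cells of $\D_\w$ and whose nodes are recorded by the parallelograms; the discrete data of this configuration is precisely what $\mf{V}$ parametrizes. I would then match the connected components of $\n_\w\Sev$ with those of $\mf{V}$, which produces the factor $l(\mf{V})$ and reduces the problem to computing the multiplicity of a single translate of the subtorus $T_\w$.

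That multiplicity equals the index of the character lattice of $T_\w$ inside the ambient lattice, and the spanning directions of this sublattice are read off from the edges of $\D_\w$. Each equivalence class of edges under the parallelogram relation contributes a single direction, scaled by the lattice length of a representative; this is exactly why the product $\widetilde{\prod}\mr{length}(\mr{Edges}(\D_\w))$ runs over representatives rather than over all edges, since the two parallel sides of a parallelogram span the same line and would otherwise be double-counted. Assembling the generators and taking the index should return the stated product.

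The hard part will be this index computation when a parallelogram is non-primitive, that is, when its sides have lattice length greater than one. In that case the corresponding lattice vector is itself non-primitive, and I must verify that its length enters the index with multiplicity exactly one per class -- neither absorbed into a primitive generator nor counted against its parallel partner. I expect to handle this by putting the matrix of edge-directions into Smith normal form and checking that the index is multiplicative over the equivalence classes, so that each class contributes precisely the lattice length of its representative. Once this is established, multiplying by the component count $l(\mf{V})$ gives $\bsm_{\Sev}(\w)$; because regularity was assumed at the outset, no separate argument for the union-of-translates structure is required, and the proof reduces entirely to this lattice-theoretic verification.
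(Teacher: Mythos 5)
There is a genuine gap, and it sits at the heart of your argument. Your plan extracts the factor $\widetilde{\prod}\mr{length(\mr{Edges}(\D_{\w}))}$ as a lattice index: you claim the multiplicity of each translate ``equals the index of the character lattice of $T_\w$ inside the ambient lattice,'' to be computed by Smith normal form of the matrix of edge directions. But that kind of computation is exactly what the paper uses (in the Remark after Theorem \ref{thm:SD,rational}) to compute $l(\mb{V})$, the number of connected components of the group $\mb{G}_{\partial\D_{\w},nodal}$ -- not the scheme-theoretic multiplicity of $\n\w\Sev$ along a component. These are different invariants, and the second one cannot be read off from the support at all: $\mb{V}$ is a translate of a subgroup, hence reduced, while $\bsm_{\Sev}(\w)$ records the length of the local ring of $\n\w\Sev$ at the generic point of each component, i.e.\ data about \emph{how} $\Sev$ degenerates, invisible from its set-theoretic limit. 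In the paper's discriminant example the initial scheme is $V((4bd-c^2)^2)$: the supporting torus is defined by the primitive vector $(0,1,-2,1,0)$, so every lattice index you could form from it is $1$, yet the weight is $2$. No Smith-normal-form manipulation of edge directions will produce that exponent.

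The missing input is Shustin's patchworking theory (\S\ref{sec:Patchworking}), which is the only place in the paper where the multiplicity actually enters. Enumeration 1 says that with coefficients fixed on a suitable set $\mc{B}$ of vertices, $\mb{V}$ contains $\prod 2\mr{area}(\mr{Triangles})/\widetilde{\prod}\mr{length(\mr{Edges}(\D_{\w}))}$ points; Enumeration 2 says that the same constraints are satisfied by $\prod 2\mr{area}(\mr{Triangles})$ curves in $\Sev(\K)$ -- each limit point is hit by $\widetilde{\prod}\mr{length}$ deformation patterns, and this ratio \emph{is} the multiplicity. The paper converts both counts into tropical intersection numbers against $\Tr(\mb{L}_{\mc{B}})$ and divides, with one extra wrinkle your proposal also elides: when $\D_{\w}$ has a non-primitive parallelogram, $\mb{V}$ lies in the coordinate subspace $x_a=0$ (special points $a$), hence outside $\T_{\D}$, whereas $\n\w\Sev\subset\T_{\D}$; so even the identification of components requires the projection $\pi_{\D_{\w}}$ and the bijectivity statement, plus the block-matrix determinant identity $(\Tr(\mb{G}^{e})\cdot\pi_{*}(\Tr(\mb{L}_{\mc{B}})))=(\Tr(\mb{G}^{e*})\cdot\Tr(\mb{L}_{\mc{B}}))$ that lets one compare intersection numbers upstairs and downstairs. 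Without the two enumerations over $\K$, your argument can at best recover $l(\mb{V})$, and the theorem's edge-length factor is out of reach.
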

Using these results on tropical Severi varieties,
we provide another proof of the independence of
the configuration of points in the enumerations
of tropical plane curves (\S\ref{sec:Severi
degree}).

\section{Preliminaries}  In tropical geometry, we study {\em tropical} varieties, which are
 polyhedral objects with certain properties. The precise definition is presented in
 \S \ref{sec:Tropical varieties}.  Classical algebraic
 varieties are connected with tropical varieties by an operation
 called \emph{tropicalization}. In this section, we review only what we need to
study
  parameter
 spaces of algebraic curves on toric surfaces.

 \subsection{Tropical Varieties}
 \label{sec:Tropical varieties}\cite{MS}
Let $\Sigma\subset\Real^n$ be a one-dimensional
fan with $r$ rays. Let $\bs{u}_i$ be the first
lattice point on the $i$th ray of $\Sigma$. We
give $\Sigma$ the structure of a \emph{weighted
fan} by assigning a weight $m_i\in\mb{N}$ to the
$i$th ray of $\Sigma$.
\setlength{\unitlength}{0.2mm}
\begin{picture}(0,0)(-80,40)
\put
(0,0){\vector(1,0){30}}\put(33,-3){\tiny{$2$}}\put(10,-4)
{\tiny{$\bullet$}}\put(10,
-10){\tiny{$\bs{u}_1$}}
\put(0,0){\vector(-1,1){25}}\put(-36,25){\tiny{$1$}}
\put(-15,8){\tiny{$\bullet$}} \put(-6,
9){\tiny{$\bs{u}_2$}}
\put(0,0){\vector(-1,-2){24}}\put(-30,
-60){\tiny{$1$}}\put(-11,-20){\tiny{$\bullet$}}\put(-27,
-13){\tiny{$\bs{u}_3$}}

\end{picture}
We say that $\Sigma$ is \emph{balanced} if
\[\sum_{i=1}^r m_i\bs{u}_i=0.\]
We now extend this concept to arbitrary
polyhedral complexes.

\begin{defn}
 Let $\Sigma\subset\Real^n$ be a rational
 polyhedral fan, of pure dimension $d$, and
 fix $m(\sigma)\in\mb{N}$ for all maximal cones
 $\sigma$.
 Such $\Sigma$ is called a  \emph{weighted} fan.
We denote the set of all cones of dimension $k$
in $\Sigma$ by $\Sigma^{(d-k)}$. Let
$\tau\in\Sigma^{(1)}$ and let $L_{\tau}$ be the
affine span of $\tau$. Note that since $\tau$ is
a {\em rational} cone, $L_{\Z}=L\cap\Z^n$ is a
free abelian group of rank $d-1$ with
$\Z^n/L_{\Z}\cong\Z^{n-d+1}$. For each maximal
cone $\sigma\in\Sigma$ with $\tau\subset\sigma$
the cone $(\sigma+L_{\tau})/L_{\tau}$ is a
one-dimensional cone (ray) in $\Real^n/L_{\tau}$.
Let $\bs{u}_{\sigma/\tau}$ be the first lattice
point on this ray.
The weighted fan $\Sigma$ is \emph{balanced at
$\tau$} if
\[\sum m(\sigma)\bs{u}_{\sigma/\tau}=0 \in
\Real^n/L_{\tau},\] as the sum varies over all
maximal cones containing $\tau$.
\setlength{\unitlength}{0.15mm}
\begin{picture}(0,0)(-140,0)
\curve(15,45,95,85)\curve(0,0,80,40)\curve(80,40,95,85)
\curve(0,0,50,-10)\curve(50,-10,130,30)\curve(80,40,130,30)
\curve(0,-13,-30,-10)\curve(0,0,-30,-10)\curve(0,-30,35,-7)
\linethickness{0.8mm}
\curve(0,0,15,45)\curve(0,0,0,-30)
\curve(0,0,-30,-10)\curve(0,0,50,-10)
\end{picture}\\\\
The weighted fan $\Sigma$ is \emph{balanced} if
it is balanced at all $\tau\in\Sigma^{(1)}$.
Now let $\Sigma$ be a  rational polyhedral
complex of pure dimension $d$ with weight
$m(\sigma)\in\mb{N}$ on each
 maximal polyhedron $\sigma$ in $\Sigma$. Then for each $\tau\in\Sigma$ the fan $\mr{star}_{\Sigma}(\tau)$ inherits a
 weighting function $m$, where $\mr{star}_{\Sigma}(\tau)$ is the \emph{star} of $\tau\in\Sigma$ whose cones are indexed by those
 $\sigma\in\Sigma$ for which $\tau$ is a face of $\sigma$: Fix $\w\in\tau$.
 Then the cone of $\mr{star}_{\Sigma}(\tau)$ indexed by $\sigma$ is the Minkowski sum
 \[\bar{\sigma}:=\{v\in\Real^n: \exists \epsilon >0 \text{ with } \w+\epsilon v\in \sigma\} + \text{aff}(\tau)-\w,\] where $\text{aff}(\tau)$ is the affine span of $\tau$. This is independent of the choice of $\w$.
The weighted polyhedral complex $\Sigma$ is \emph{balanced} if the fan
 $\mr{star}_{\Sigma}(\tau)$
 is balanced for all $\tau\in\Sigma^{(1)}$.
\end{defn}

\begin{defn}\cite{AR},\cite{Kazarnovskii2}, \cite[\S 1]{ST}\begin{enumerate}
\item A  {\em homogeneous tropical variety} of {\em degree} $n-d$
 is a pair $(\mc{T},\bsm)$,
where $\mc{T}$ is a subset of $\Real^n$ and
$\bsm:\mc{T}^{\circ}\rightarrow \Z_{>0}$ is a
locally constant function, called \emph{weighting
function} which satisfies:
\begin{itemize}\item There exists a pure $d$-dimensional rational polyhedral complex supported on
$\mc{T}$;
\item $\mc{T}^{\circ}\subset\mc{T}$ is the open subset of regular points, where
$\w\in\mc{T}$ is called \emph{regular} if there
exists a vector subspace $L_{\w}\subset\Real^n$
such that  locally near $\w$, $\mc{T}$ is equal
to a translation $L_{\w}+v$ of $L_{\w}$ for some
$v\in\Real^n$.
\item The function $\bsm$ satisfies the  balancing condition  for
one (and hence for any) polyhedral complex
supported on the set $\mc{T}$.
\end{itemize}
\item A {\em tropical cycle} is a formal sum of homogeneous
tropical varieties of different degrees.

\end{enumerate}
\end{defn}

\subsection{Tropicalization}\cite{Katz2}, \cite{
Kazarnovskii2},
\cite{MS}\label{sec:Tropicalization}

 We  consider an extension of the complex field $\C$: Let
$\K$ denote  the field of locally convergent
Puiseux series over $\C$, that is, the elements
of $\K$ are power series of the form
\[b(t)=\sum_{\tau\in R}c_{\tau}t^{\tau},\] where $R\subset\Rational$
is contained in an arithmetic progression bounded
{\em from above}, $c_{\tau}\in\C$,  and
$\sum_{\tau\in R}|c_{\tau}|t^{\tau}<\infty$ for
sufficiently large positive $t$. This is an
algebraically closed field of characteristic zero
with a non-Archimedean valuation
\[Val(b)=\mr{max}\{\tau\in R:c_{\tau}\ne 0\}.\] Without loss of generality we may suppose that
$Val(b)$ is an integer by changing the parameter
$t\mapsto t^l$ for some $l$. We always assume
this unless mentioned otherwise.
Note that this definition of $\K$ is slightly
different from the one in \cite{Shustin2}.
However, the one in \cite{Shustin2} can be
obtained by the substitution, $t\mapsto t^{-1}$.
Also the tropicalization can be defined for a
general field with a non-Archimedean valuation.
However, it is enough to consider only $\K$ for
our purpose of the study of parameter spaces of
curves on toric surfaces.

 By a \emph{scheme}
we shall mean an algebraic scheme over the  field
$\K$, that is, a scheme $X$ together with a
morphism of finite type from $X$ to $\mr{Spec}(\K)$. A
{\em variety} will be a reduced scheme, and a
{\em subvariety} of a scheme will be a closed
reduced subscheme. A {\em point} on a scheme will
always be a closed point. A {\em curve} is a
1-dimensional scheme. In fact, the
tropicalization is an operation which is defined
only on subvarieties of an algebraic torus
$\TK^n=\mr{Spec}(\K[\Z^n])$. The subvarieties of
$\TK^n$ are called {\em very affine varieties}.
We  often use  the vector-notation, for example,
$c^a$ is $c_1^{i}\cdot c_2^{j}$ and $c_a$ is
$c_{(i,j)}$ where $a=(i,j)$. Also we  often
identify a Laurent polynomial with a function on
a finite set as follows: let
$f=\sum_{a\in\mc{A}}c_ax^a$ be a Laurent
polynomial with variables $x$ and coefficients
$c_a$ in a ring $\mc{R}$, where $\mc{A}$ is a
finite subset of $\Z^n$. Then $f$  is identified
with the function \[f:\mc{A}\rightarrow
\mc{R},\quad a\mapsto c_a.\]


\subsubsection{Tropicalization: Varieties over $\C$.}\label{sec:constant}
In this section we present a precise definition
of the tropicalization of $X$, where $X$ is a
subvariety of an algebraic torus over $\C$,
$\T=\mr{Spec}(\C[\Z^n])=\mr{Spec}(\C[x_1^{\pm 1},\dots,
x_n^{\pm 1}])$.
\begin{defns}$ $\label{defns:InitialVersion1}
\begin{itemize}
\item (initial sets) Given a finite subset $\mc{A}$ of $\Z^n$ and a vector $\w\in\Z^n$, let $\n\w
\mc{A}$ denote the set of all points $a\in
\mc{A}$ such that the inner product $a\cdot \w$
is maximal.
\item (initial polynomials) Given a Laurent polynomial $f\in \TC^{\mc{A}}$ and
a vector $\w\in\Z^n$,  the {\em $\w$-degree} of
$f$ is the maximum of $a\cdot  \w$ for all $a\in
\mc{A}$.   Let $\n\w f$ denote the restriction of
$f$ to $\n\w \mc{A}$ so that  $\n\w f$ is
homogeneous with respect to $\w$-degree.  We can
also see that
\[\n\w f (x)= \mr{lim}_{t\rightarrow \infty} t^{-\gamma}f(t^{\w}\cdot
x),\] where $\gamma$ is the $\w$-degree of $f$.
\item (initial ideals) Given an ideal $I$ in the ring of Laurent polynomials $\C[\Z^n]$ and a
 vector
$\w\in \Z^n$, let $\n\w I$ denote the ideal
generated by all initial polynomials $\n\w f$ for
$f\in I$.
\item (initial schemes) Given an affine scheme $V(I):=\mr{Spec}(\C[\Z^n]/I)$ and a
vector $\w\in\Z^n$, the scheme $V(\n\w
I):=\mr{Spec}(\C[\Z^n]/\n\w I)$ defined by $\n\w I$ is
called {\em the initial scheme} of
 $V(I)$ with respect to $\w$.
\end{itemize}
\end{defns}

\begin{rem}\label{rem:Flat}
We may consider the initial scheme $V(\n\w I)$ as
a flat degeneration of $V(I)$, that is, there
exist a one-parameter flat family
 $V(I_t)$ such that $V(I)=V(I_1) $ and $V(\n\w I)=V(I_0)$. The details are given in
 \cite[\S 15.8]{Eisenbud}.
\end{rem}

\begin{defns}\label{defn:cvectorVersion1} Let $X$ be an irreducible  subvariety
of the algebraic torus $\T=\T_{\C}$.

\begin{itemize}
\item A vector $\w\in\Z^n$ is called a
\emph{c-vector} of $X$ if the initial ideal
  $\n\w I_X$ of the defining ideal $I_X$ of $X$ contains no monomial, equivalently, the
initial scheme $\n\w X:= V(\n\w I_X)$ is not
empty. (``c'' is the first letter of ``current''
as introduced in \cite{Kazarnovskii2})
\item The (support of the) tropicalization of $X$  is the closure of the union of positive rays
$\Real_{\ge 0}\cdot\w$  generated by all
c-vectors $\w$ of $X$ and it is denoted by
$\mathrm{Trop}(X)$.
\item We define a weighting function $\bsm_X$ on $\Tr(X)$ as follows:
 For a point   $\w$ in $\mr{Trop}(X)$ let $\bs{m}(\w)$  to be the sum of the
 multiplicities of all minimal associated prime ideals of the initial ideal
 $\n\w I_X$.

 \item The set $\Tr(X)$ together with the weighting function
 $\bsm_X$ is called the {\em tropicalization } of $X$ and it is
 denoted again by $\Tr(X)$.
\end{itemize}
\end{defns}


\begin{eg}\label{eg:TropicalConic}Let
$X=V((1+x+y)^2)$ be the plane curve. The picture
below is the support of $\mr{Trop}(X)$. The
integers near the three rays are the
corresponding weights. For example, let
$\w=(0,-1)$. The corresponding initial
polynomial, $\n{(0,-1)}f$, is equal to $(1+x)^2$
and so $\bsm((0,-1))=2$.\end{eg}

\begin{picture}(0,0) (-350, -10)
\put(50,10){\vector(-1,0){30}}
\put(50,10){\vector(0,-1){30}}\put(50,10){\vector(1,1){20}}
\put(13,8){\tiny{$2$}}
\put(48,-30){\tiny{$2$}}\put(71,31){\tiny{$2$}}
\end{picture}
%
%
\subsubsection{Tropicalization}\label{subsec:Version2}
Now we consider the general case: varieties over
$\K\supset\C$. As noted in Remark \ref{rem:Flat},
for varieties defined over $\C$, tropicalization
is a way to record certain flat limits.
Geometrically, the extension of field from $\C$
to $\K$ can be seen as adding one parameter $t$
and considering  flat families of varieties. This
idea is well described in Proposition
\ref{prop:Equisingular} in the case of curves on
toric surfaces. By generalizing  the definition
of the initial ideal to consider the role of the
new parameter $t$, we can define  the
tropicalization of a variety over $\K$ in a
straightforward way.

\begin{defns}$ $\label{defns:initial}
\begin{itemize}
\item (initial sets) Let $(\mathcal{A},\nu)$ be a pair of  a finite subset $\mathcal{A}$ of $\Z^n$ and a
       real-valued function $\nu$ on it. Given a $(\mathcal{A},\nu)$
      and a vector $\w$ in $\Z^n$, let $\n\w (\mathcal{A},\nu)$ denote the set of all points $a\in
    \mathcal{A}$ such that  $a\cdot \w+\nu(a)$ is maximal.
\item (initial polynomials) Given a Laurent polynomial
$f=\sum_{a\in\mc{A}}c_a(t)x^a$ over $\K$  and a
vector $\w\in\Z^n$, the  {\em $t-\w$-degree } of
$f$ is the maximum of $a\cdot\w+Val(c_a)$ for all
$a\in \mc{A}$. We define $\n\w f$ which is
defined over $\C$ as follows: \[\n\w f(x)=
\mr{lim}_{t\rightarrow\infty}t^{-\gamma}f(t^{\w}\cdot
x),\] where $\gamma$ is the $t-\w$-degree of $f$.
The following shows how to obtain $\n\w f$:
Let $\Delta=\mr{Newton}(f)$ be the Newton polytope of
$f$. We take the convex hull $\widetilde{\Delta}$
of the set $\{(a,Val(c_a))\in \Z^{n+1}: a\in
\mathcal{A}\}$ and introduce the function
\begin{equation}\nu_f:\Delta\rightarrow \Real,\quad
\nu_f(\alpha)=\mr{max}\{\beta:(\alpha,\beta)\in\widetilde{\Delta}\}.\end{equation}\label{eq:50}
This is a concave piecewise-linear function.
Notice that we can write
\begin{equation}c_a(t)=c_a^{\circ}\text{} t^{\nu_f(a)}+l.o.t.,\end{equation}\label{eq:11} where  $c_a^{\circ}$ is some complex
number which is zero if $\nu_f(a)>\mr{Val}(c_a)$.
 Then
\[\n\w f=\sum_{a\in \n\w(\mathcal{A},\nu_f)}c_a^{\circ}\text{} x^a\]

\item (initial ideals) For an ideal $I$ in the ring of Laurent
polynomials $\K[\Z^n]$ and a vector $\w\in \Z^n$,
let $\n\w I\subset\C[\Z^n]$
 denote
 the ideal
generated by all initial polynomials  $\n\w f$
for $f\in I$.
\end{itemize}
\end{defns}
The definition of the tropicalization given in
the previous subsection \S \ref{sec:constant}
 generalizes in a straightforward way for an irreducible subvariety $X$
 of the algebraic torus $\T_{\K}$.\\
 Now the proofs of the following facts can be found in many references
on tropical geometry, for example, \cite{BG},
\cite{EKL}, \cite{MS}:

\begin{itemize}
\item The tropicalization $\Tr(X)$ has the structure of a homogeneous
tropical variety with dimension  $\mr{dim}(X)$
\item Suppose  $X$ is defined over $\C$, i,e., the ideal
$I_X\subset\K[\Z^n]$ of $X$ can be generated by
Laurent polynomials over $\C$ and thus $X$ is
independent of the parameter $t$. (Sometimes,
this case is called {\em constant coefficient
case}.) Then $\mr{Trop}(X)$ coincides with
$\mr{Trop}(X(\C))$, that is, we can treat it as
in the previous section. Furthermore, in this
case $(\Tr(X), \bsm_X)$ has a balanced polyhedral
{\em fan} structure.
\item For any regular point $\w\in\Tr(X)$, the initial scheme $\n\w X$ is a
union of finitely many translations of a subtorus
$\mb{G}^e$ of $\T=\T_{\C}$.
\end{itemize}

\subsection{Tropical plane curves}
We want to study   complex algebraic curves on
toric surfaces by studying their
tropicalizations. Recall that tropicalization is
defined for subvarieties of algebraic tori.
Thus we only consider the intersections of the
curves with the big open torus $\TC^2$ of the
toric surfaces. It is very easy to understand the
tropicalizations of the curves: any such curve
 $X$ in $\TC^2$ is defined by a Laurent polynomial $f(x,y)$
in two variables over $\C$. The tropicalization
$\mathrm{Trop}(X)$ is the 1-dimensional skeleton
of the (outer) normal fan of the Newton polygon
of $f$ and the multiplicity of each ray is the
lattice length of the corresponding edge of the
Newton polygon of $f$ (see Example
\ref{eg:TropicalConic}). The problem is that
$\mathrm{Trop}(X)$ does not distinguish much
about the  curves because curves with the same
Newton polygon have the same tropicalization.
This lack of information is overcome by
considering a more generalized version of
$\mathrm{Trop}(X)$. Namely, we extend the field
of coefficients from $\C$ to $\K$, the field of
locally convergent Puiseux series over $\C$.
The relationship between algebraic  curves over
$\C$ and $\K$ is formulated in the following
statement.
\begin{prop}\cite[\S 2.3.]{Shustin2} \label{prop:Equisingular} Let  a Laurent polynomial
\[f(x,y)=\sum_{(i,j)\in\Delta} c_{ij}(t)x^iy^j\in\K[x,y]\] define a curve $C_{\K}\subset
\TK^2$ with only isolated singularities and
Newton polygon $\Delta$. We obtain a
one-parameter  family $C^{(t)}$ of curves on
$\TC^2$. For $t$ with $|t|>> 0$, the family
$C^{(t)}$ is equisingular and the topological
types of singularities of $C^{(t)}$ are in
$1$-to-$1$ correspondence with topological types
of singularities of $C_{\K}$.
\end{prop}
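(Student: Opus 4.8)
The plan is to realize the single polynomial $f$ over $\K$ as a genuine holomorphic one-parameter family of complex polynomials, and then to transport the local analytic structure at each singularity from $C_{\K}$ to $C^{(t)}$ by Newton--Puiseux theory. First I would reduce to the convergent setting. After the substitution $t\mapsto t^{l}$ permitted in \S\ref{sec:Tropicalization}, each coefficient $c_{ij}(t)$ becomes an ordinary convergent expansion in $t$, so there is a threshold $T>0$ such that for all $t$ with $|t|>T$ substitution gives a well-defined complex polynomial $f^{(t)}(x,y)\in\C[x,y]$ depending holomorphically on $t$. Thus $\{C^{(t)}\}_{|t|>T}$ is a holomorphic family of plane curves over a punctured neighborhood of $\infty$, and $f$ is recovered as the germ at $\infty$ of this family; this is exactly the geometric content of extending the field from $\C$ to $\K$.

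Next I would locate the singularities. Since $C_{\K}$ has only isolated singularities, the singular subscheme cut out by $f=\partial_x f=\partial_y f=0$ is a finite $\K$-scheme, and as $\K$ is algebraically closed its points have coordinates $(x_0(t),y_0(t))$ with $x_0,y_0\in\K$. By convergence these coordinates specialize, for $|t|>T$, to well-defined points $p^{(t)}\in\TC^2$, and each local branch of $C_{\K}$ at such a point is given by a convergent Puiseux series, hence specializes to an analytic branch of $C^{(t)}$ at $p^{(t)}$. The characteristic exponents and pairwise intersection multiplicities of these branches are read off from the Puiseux data and are therefore preserved under specialization; this identifies the topological type of the singularity of $C^{(t)}$ at $p^{(t)}$ with that of $C_{\K}$ at $p$, giving one direction of the correspondence.

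It remains to show the correspondence is a bijection and that the family is \emph{equisingular} for large $|t|$. Here I would argue that the equisingularity type is a constructible condition on the parameter: the total $\delta$-invariant (equivalently the sum of Milnor numbers) jumps only on a proper Zariski-closed subset of $t$-values, so the family is equisingular on a punctured neighborhood of each nongeneric value, in particular on a punctured neighborhood of $\infty$. Surjectivity of the correspondence then follows, since no singularity of $C^{(t)}$ can appear outside the specialization of the finite singular scheme without forcing the total $\delta$-invariant to change.

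The main obstacle is exactly this control at the boundary $t=\infty$: one must rule out that singular points collide, that new singularities are born, or that $\delta$ leaks off as $|t|\to\infty$. This is handled by the finiteness (properness over the parameter) of the singular scheme, which keeps the specializing points distinct and confines them to the torus, together with the conservation of the $\delta$-invariant in the resulting flat family, which pins the number and the topological type of the singularities of $C^{(t)}$ to their values over $\K$.
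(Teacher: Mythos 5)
You should first note that the paper contains no proof of this proposition at all: it is imported verbatim from \cite[\S 2.3]{Shustin2}, so your attempt has to be measured against Shustin's argument. Your opening steps are consistent with it: reducing to integer exponents via $t\mapsto t^l$, obtaining a holomorphic family $f^{(t)}$ for $|t|>T$, and specializing the finite singular scheme (coordinates of singular points lie in the algebraically closed field $\K$, and a nonzero element of $\K$ has a dominant term $ct^{v}$, hence is nonvanishing for $|t|\gg 0$, so the points land in $\TC^2$ and stay distinct). One smaller caveat: your claim that the Puiseux expansions of the branches of $C_{\K}$ ``specialize'' to the Puiseux expansions of $C^{(t)}$ conflates two different Puiseux variables; that substitution $t=t_0$ commutes with taking branch expansions needs a justification (e.g.\ Weierstrass preparation with parameter, or continuity of roots plus finite determinacy), though the idea is salvageable.

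The genuine gap is your equisingularity step. The family $t\mapsto f^{(t)}$ is only \emph{analytic} in $t$ on $\{|t|>T\}$ --- the coefficients are infinite locally convergent Puiseux series --- so there is no ``proper Zariski-closed subset of $t$-values'' on which the $\delta$-invariant jumps; at best the jump locus is a closed analytic subset, hence discrete, and a discrete set in a punctured neighborhood of $\infty$ may accumulate at $\infty$, which is exactly the point in question. Your proposed fix, conservation of $\delta$ ``in the resulting flat family,'' fails precisely at the boundary: the family does not extend across $t=\infty$ as an equisingular (or even reduced) family --- the flat limit there is the degenerate curve governed by the regular subdivision $\D_{\w}$, which is the very degeneration this paper studies, and $\delta$ is \emph{not} conserved in that limit. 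The mechanism that actually closes the argument, and the reason Shustin works over locally convergent Puiseux series, is finite determinacy: at each of the finitely many singular points over $\K$, the topological type is cut out by finitely many polynomial equalities and inequalities in finitely many jet coefficients, all of which are elements of $\K$; since each nonzero one is nonvanishing for $|t|\gg 0$, all of these constructible conditions --- together with the discriminant/resultant conditions guaranteeing that $f^{(t)}$ stays squarefree and the singular points stay separated --- hold simultaneously beyond a single threshold $T'$. Likewise, ruling out newborn singularities requires more than ``$\delta$ would change'': one uses a uniform B\'ezout/Bernstein bound on the solutions of $f^{(t)}=f^{(t)}_x=f^{(t)}_y=0$ in $\TC^2$ to see that the singular locus of the family is a one-dimensional analytic set with uniformly finite fibers near $t=\infty$, whose branches are Puiseux-parametrized and therefore are exactly $\K$-points of the (finite) singular scheme of $C_{\K}$; only then does every singularity of $C^{(t)}$, $|t|\gg 0$, come from one over $\K$.
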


Shustin \cite{Shustin2} found  nice
characterizations of the tropicalizations of
nodal curves on toric surfaces which are crucial
to the study of the initial schemes of Severi
varieties. However, these characterizations are
valid only for nodal curves satisfying certain
base-point conditions. In the following
subsections we show that how we can replace the
base-point conditions by the rank-condition
(Definition \ref{defn:rank}).

\subsubsection{Subdivisions of Polygons and
Adjacency graphs.}\label{sec:subdivision}

\begin{defns}Let $\D$ be a convex lattice polygon. \begin{enumerate}
\item A {\em subdivision} of  $\D$, denoted by $\SD$, is a decomposition
of $\D$ into a finite number of non-degenerate
convex lattice sub-polygons   such that the
intersection of any two of these sub-polygons is
a common face of  both of them (maybe empty). (We
consider $\D$ itself as a subdivision of $\D$
with one two-dimensional face.)
\item Given a subdivision $\SD$, let $\mr{Vertices}(\SD), \mr{Edges}(\SD),
\mr{Faces}(\SD)$,\\ $\mr{\mr{Triangles}}(\SD),
\mr{Parallelograms}(\SD), \mr{Int}(\SD)\cap\Z^2$ be the set
of vertices, edges, (2-dimensional) faces,
triangles, parallelograms , interior lattice
points of $\SD$, respectively.
\item A subdivision $\SD$ is called
\begin{itemize}\item {\em triangular} if every 2-dimensional face  is a
triangle;
\item {\em nodal} if every 2-dimensional face is either a triangle or a
parallelogram; \item {\em simple} if every
lattice point on the boundary of $\D$ is a vertex
of $\SD$.
\end{itemize}
\item A subdivision $\SD$  is called \emph{regular} if there exists a continuous
concave piecewise-linear  function on $\D$  whose
domains of linearity are precisely the
2-dimensional faces of $\SD$.
\item Given a regular subdivision $\SD$, consider the set of
all {\em concave} piecewise-linear functions on
$\D$ whose domains of linearity induce the
subdivision $\SD$.
As embedded in $\Real^{\D\cap\Z^2}$, this set is
a polyhedral cone. We take its image in the
quotient space
$\Real^{\D\cap\Z^2}/\Real\cdot(1,\dots,1)$ which
is again a polyhedral cone and  denote it  by
$\mc{T}C(\SD)$, called the {\em tropical cone} of
$\SD$. Its dimension   is called the {\em rank}
of $\SD$ and denoted by $\mr{rank}(\SD)$.
\end{enumerate}
\end{defns}
Now let $\psi:\D\cap\Z^2\rightarrow \Real$ be a
real-valued function defined on $\D\cap\Z^2$. We
construct a regular subdivision
$\D_{\psi}$ of $\D$ from $\psi$ as follows:\\
Let $G_{\psi}\subset\Real^3$ be the convex hull
of the set
\[\{(a, y): \quad y\le\psi(a),\quad a\in\D\cap\Z^2\}.\]
Then the upper boundary of $G_{\psi}$ is the
graph of a concave piecewise-linear function
$cc(\psi)$ which is called the \emph{concave
hull} of $\psi$. (The upper boundary of
$G_{\psi}$ is by definition the union of faces of
$G_{\psi}$ which do not contain vertical
half-lines.)
 Let $\D_{\psi}$ denote the regular
subdivision of $\D$ given by the domains of
linearity of $cc(\psi)$.\\
\begin{parbox}{11cm}
{The figure on the right illustrates  the case
when $\D$ is the segment, $conv(0,\dots,4)$. The
subdivision $\D_{\psi}$ is the union of two
segments $conv(0,2)$ and $conv(2,4)$ (We denote
the {\em convex hull} of a set $A$ by $conv
(A)$.)}
\end{parbox}\\
 Let us
consider the tropical cone
$\mc{T}C(\psi):=\mc{T}C(\D_{\psi})$.
 It contains the concave hull of $\psi$ restricted on $\D\cap\Z^2$:
 $cc(\psi)|_{\D\cap\Z^2}\in \mc{T}C(\psi)$. Notice that $cc(\psi)|_{\D\cap\Z^2}$ may not coincide with $\psi$
and in such case $\psi \notin \mc{T}C(\psi)$.
\begin{picture}(0,0)(-210,-100)
\curve(0,0,80,0) \put(-3,5){\tiny{$\bullet$}}
\put(16,8){\tiny{$\bullet$}}\put(36,28){\tiny{$\bullet$}}\put(56,17){\tiny{$\bullet$}}
\put(76,7){\tiny{$\bullet$}}
\put(0,10){$\vector(0,-1){30}$}\put(20,10){$\vector(0,-1){30}$}
\put(40,30){$\vector(0,-1){50}$}\put(60,20){$\vector(0,-1)
{40}$}\put(80,10){$\vector(0,-1){30}$}\put(40,-15){\tiny{${G_{\psi}}$}}
\linethickness{0.5mm}
\curve(0,10,40,30)\curve(40,30,80,10)
\end{picture}
\begin{defn}\label{defn:rank}
 The {\em  rank } of $\psi$, written as $\mr{rank}(\psi)$, is the rank of the regular subdivision $\D_{\psi}$.

\end{defn}
\begin{prop} Suppose that $\SD=\D_{\psi}$ is a regular nodal
subdivision of $\D$. Then,
\[\mr{rank}(\psi)=|\mr{\mr{Vertices}}(\SD)|-1-|\mr{Parallelograms}(\SD)|.\]
\end{prop}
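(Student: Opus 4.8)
The plan is to compute $\mr{rank}(\psi)=\mr{rank}(\SD)=\dim\mc{T}C(\SD)$ by identifying the linear span of the tropical cone with an explicit space of piecewise-affine functions, and then to count its dimension combinatorially. Recall that $\mc{T}C(\SD)$ is the image, in $\Real^{\D\cap\Z^2}/\Real\cdot(1,\dots,1)$, of the cone $C$ of all concave piecewise-linear functions on $\D$ whose domains of linearity induce $\SD$. Let $\mc{W}\subset\Real^{\D\cap\Z^2}$ be the linear space of those functions on $\D\cap\Z^2$ that extend to a continuous function on $\D$ affine-linear on each two-dimensional face of $\SD$. Clearly $C\subset\mc{W}$. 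Conversely, regularity of $\SD$ furnishes $h\in C$ whose domains of linearity are exactly the faces of $\SD$, so the bend of $h$ across every interior edge is strict; for any $g\in\mc{W}$ the perturbation $h+\eps g$ is again affine on each face and, the strict bend being an open condition, stays concave and induces $\SD$ for all small $\eps>0$, so $h+\eps g\in C$. Hence $\mr{span}(C)=\mc{W}$, and since $(1,\dots,1)\in\mc{W}$ we obtain $\mr{rank}(\psi)=\dim\mc{W}-1$. It therefore suffices to prove $\dim\mc{W}=|\mr{Vertices}(\SD)|-|\mr{Parallelograms}(\SD)|$.

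Next I would describe $\mc{W}$ through its vertex values. Restriction to $\mr{Vertices}(\SD)$ gives a linear isomorphism of $\mc{W}$ onto the subspace of $\Real^{\mr{Vertices}(\SD)}$ cut out by the parallelogram relations. Indeed, a function affine on each face is determined by its vertex values (the value at any other lattice point is read off from the affine function on the face or edge containing it), so the map is injective; and an assignment $\phi$ of vertex values extends to such a function if and only if, on every parallelogram with vertices $p_1,p_2,p_3,p_4$ in cyclic order, the relation $R_{\pi}(\phi):=\phi(p_1)+\phi(p_3)-\phi(p_2)-\phi(p_4)=0$ holds. (On a triangle three vertex values always determine a unique affine function; on a parallelogram they do so precisely when $R_{\pi}$ vanishes; and neighbouring pieces automatically agree along a shared edge, since they already agree at its two endpoints.) Thus $\dim\mc{W}=|\mr{Vertices}(\SD)|-r$, where $r$ is the rank of the system $\set{R_{\pi}}_{\pi\in\mr{Parallelograms}(\SD)}$, and the proposition is equivalent to the independence of these $|\mr{Parallelograms}(\SD)|$ relations, i.e. $r=|\mr{Parallelograms}(\SD)|$.

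The main step, and the step I expect to be the real obstacle, is this independence, which I would obtain by assembling $\SD$ one face at a time. Using regularity (for instance a line shelling of the polytope lifting $\SD$), order the two-dimensional faces so that at each stage the union $U_k$ of the first $k$ faces is a disk and each new face $\sigma_k$ meets $U_{k-1}$ in a single connected arc of its boundary. I would then track $\dim\mc{W}(U_k)$ together with the numbers of vertices and of parallelograms of $U_k$, checking inductively a short case analysis on how $\sigma_k$ attaches: a triangle glued along one edge adds one vertex and one free parameter, along two consecutive edges adds nothing; a parallelogram glued along one, two, or three consecutive edges adds $(+2,+1,0)$ vertices while the free parameters change by $(+1,0,-1)$ and the parallelogram count always by $+1$. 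In every admissible case the change in $(\#\text{vertices})-(\#\text{parallelograms})$ equals the change in $\dim\mc{W}$, so $\dim\mc{W}=|\mr{Vertices}(\SD)|-|\mr{Parallelograms}(\SD)|$; comparing with $\dim\mc{W}=|\mr{Vertices}(\SD)|-r$ forces $r=|\mr{Parallelograms}(\SD)|$, i.e. the relations $R_{\pi}$ are independent. Combined with the first paragraph this yields $\mr{rank}(\psi)=|\mr{Vertices}(\SD)|-1-|\mr{Parallelograms}(\SD)|$. The delicate point is guaranteeing the shelling-type order with connected attachments, so that the disallowed ``two opposite edges'' gluing, which is exactly what a nontrivial cycle among the $R_{\pi}$ would produce, never occurs; this is where planarity and regularity of $\SD$ genuinely enter, and it is the portion I would expect to require the most care.
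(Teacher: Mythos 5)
Your reduction is correct as far as it goes, and it is in fact more than the paper itself offers: the paper's ``proof'' is only a pointer to \cite[Lemma 2.40.]{IMS}. Your first two steps are sound: regularity supplies an interior point of the cone, so its linear span is the space $\mc{W}$ of functions affine on each face of $\SD$, and restriction to vertices identifies $\mc{W}$ with the solution set of the parallelogram relations $R_{\pi}$; hence the proposition is equivalent to the linear independence of the $R_{\pi}$. But note that, since there are only $|\mr{Parallelograms}(\SD)|$ relations, the inequality $\dim\mc{W}\ge|\mr{Vertices}(\SD)|-|\mr{Parallelograms}(\SD)|$ is trivial; the entire content of the proposition is the reverse inequality, i.e.\ the independence.

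Your induction does not prove that independence: the gap is the case of a parallelogram $\pi$ attached along three consecutive edges. There you assert that the number of free parameters drops by one, i.e.\ that $\mc{W}(U_k)=\ker\bigl(R_{\pi}|_{\mc{W}(U_{k-1})}\bigr)$ has codimension one in $\mc{W}(U_{k-1})$. This holds if and only if $R_{\pi}$ does not vanish identically on $\mc{W}(U_{k-1})$ --- which is exactly an instance of the independence you are trying to prove, and it does not follow from the inductive hypothesis: all four vertices of $\pi$ already lie in $U_{k-1}$, so $R_{\pi}$ is a functional on the old space, and knowing only that $\dim\mc{W}(U_{k-1})=|\mr{Vertices}(U_{k-1})|-|\mr{Parallelograms}(U_{k-1})|$ cannot rule out that $R_{\pi}$ is a linear combination of the relations already imposed. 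Such three-edge attachments genuinely occur in shellings (e.g.\ filling the notch of a U-shaped union of unit squares), so the case cannot be dodged. Moreover, you locate the delicate point in the wrong place: producing a shelling with connected attachments is the standard part (lift $\SD$ by a defining concave function and apply a Bruggesser--Mani line shelling to the lifted polytope), and excluding the ``two opposite edges'' gluing does not by itself prevent a dependence among the $R_{\pi}$ --- a dependence would manifest precisely as a three-edge attachment whose relation is already implied, with every intermediate dimension count still satisfying the trivial inequality. To close the gap you must exhibit, at each such step, a function in $\mc{W}(U_{k-1})$ with $R_{\pi}\ne 0$ (or strengthen the induction hypothesis so that it produces one); this requires a real geometric input --- the regularity of $\SD$, or an extremal/combinatorial argument about planar configurations of parallelograms --- none of which appears in your sketch.
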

The proof of this Proposition  can be easily
deduced from \cite[Lemma 2.40.]{IMS}.
\begin{rem}
The regular subdivisions of a lattice polytope is
studied in \cite[Ch.7]{GKZ} in which the
secondary fan is introduced. In a forthcoming
paper, the connection of the tropical cone
$\mc{T}C(\psi)$ and $\mr{rank}(\psi)$ to the
secondary fan will be studied.
\end{rem}
Now we study the adjacency graph $\SD^*$ of a
given subdivision $\SD$ of $\D$. By definition,
the vertices $F^*$ of $\SD^*$ correspond to the
2-dimensional faces $F$ of $\SD$ and two vertices
$F_1^*$ and $F_2^*$ of $\SD^*$ are connected by
an edge $(F_1^*, F_2^*)$ if the corresponding
faces $F_1$ and $F_2$ of $\SD$ have a common edge
$(F_1, F_2)$ in $\SD$.
Given an orientation $\Gamma$  on
$\mr{Edges}(\SD)$, we define an orientation
$\Gamma^*$ on $\mr{Edges}(\SD^*)$ as follows:
direct $F_1^*\rightarrow F_2^*$, if the oriented
edge $(F_1, F_2)$ and a normal vector to it
leaving from $F_1$ to $F_2$ are positively
oriented. Otherwise, direct $F_2^*\rightarrow
F_1^*$. For any subdivision $\SD$ of $\D$, we can
always find an orientation $\Gamma$ on
$\mr{Edges}(\SD)$ such that $\Gamma^*$ has
neither an oriented cycle nor a sink.  (A sink
(resp. source) is a vertex $v$ such that
 all edges adjacent to $v$ are coming into (resp. leaving from) $v$.) In fact,
 an oriented cycle in $\Gamma^*$ corresponds to a sink or a source at an internal vertex of $(\SD,\Gamma)$.
 Also, a sink in $\Gamma^*$ corresponds to an oriented cycle in $\Gamma$. We can choose an orientation $\Gamma$ on
 $\mr{Edges}(\SD)$ such that $\Gamma$ has no oriented cycle and it also has no sink/source at an internal vertex of $\SD$. For example, choose a generic vector
$\zeta\in\Real^2\setminus\{0\}$ and orient the
edges of $\SD$ so that they form acute angles
with $\zeta$.\\\\\\
\setlength{\unitlength}{0.22mm}
\begin{picture}(0,0)(-70,30)
\put(0,0){\vector(1,0){30}}
\curve(30,0,80,0)\put(30,5){\tiny{$1$}}
\put(25,17){\tiny{$2$}}
\put(0,0){\vector(1,2){15}}\curve(15,30,40,80)
\put(37,35){\tiny{$3$}}
\put(0,0){\vector(1,1){10}}\curve(10,10,30,30)
\put(50,25){\tiny{$4$}}
\put(0,0){\vector(2,1){10}}\curve(10,5,50,30)
\put(30,30){\vector(1,0){10}}\curve(40,30,50,30)
\put(45,50){\tiny{$5$}}
\put(30,30){\vector(1,2){10}}\curve(40,50,40,60)\put(35,45){\tiny{$6$}}
\put(30,30){\vector(1,4){5}}\curve(35,50,40,80)\put(20,30){\tiny{$7$}}
\put(40,50){\vector(0,1){10}}\curve(40,60,40,80)
\put(40,50){\vector(1,-2){5}}\curve(45,40,50,30)
 \put(40,50){\vector(3,-4){9}}\curve(49,38,80,0)
\put(50,30){\vector(1,-1){10}}\curve(60,20,80,0)
\put(40,80){\vector(1,-2){10}}\curve(50,60,80,0)
\put(30,-23){$\SD$}
\end{picture}
\begin{picture}(0,0)(-270,-5)
\put(0,0){\vector(1,0){30}}
\put(33,0){\vector(1,0){30}}\put(0,0){\vector(0,1){30}}
\put(33,0){\vector(0,1){30}}\put(33,0){\vector(0,1){30}}
\put(30,-30){\vector(-1,1){25}}\put(30,-30){\vector(3,2){35}}

\put(30,35){\vector(-1,0){25}}\put(60,35){\vector(-1,0){25}}
\put(67,0){\vector(0,1){30}}
\put(-2,-2){\tiny{$\bullet$}}\put(-13,-2){\tiny{$2^*$}}\put(30,-2)
{\tiny{$\bullet$}}\put(30,-10){\tiny{$3^*$}}
\put(64,-2){\tiny{$\bullet$}}\put(70,-10){\tiny{$4^*$}}
\put(-2,32){\tiny{$\bullet$}}\put(-10,38){\tiny{$7^*$}}\put(30,32){\tiny{$\bullet$}}
\put(30,39){\tiny{$6^*$}}\put(62,32){\tiny{$\bullet$}}\put(62,39){\tiny{$5^*$}}
\put(28,-32){\tiny{$\bullet$}}
\put(28,-40){\tiny{$1^*$}}\put(28, -55){$\SD^*$}
\end{picture}


\subsubsection{Shustin's characterizations}
\label{sec:nodal curves}Let
 $V_{\K}(f)$  be a curve defined by a
 Laurent polynomial over $\K$,
\[f=\sum_{a\in\D\cap\Z^2} c_a(t)x^a\in\K[\Z^2],\qquad c_a\in\mb{K}
\setminus\{0\},\] where $\D=\mr{Newton}(f)$.
The support of the
tropicalization
 of $V_{\K}(f)$ is  the corner locus of the piecewise-linear  function
\[\Real^2\rightarrow \Real, \quad\alpha\mapsto max_{a\in\D\cap\Z^2}\{a\cdot\alpha +
Val_f(a)\},\] where  $Val_f$ is the function
\[Val_f:\D\cap\Z^2\rightarrow \Z, \quad a\mapsto
 Val(c_a(t)).\]
Let us denote the support of the tropicalization
of the curve $V_{\K}(f)$ by $\tau_f$.
In general,  the tropical curve $\tau_{\w}$ is by
definition the corner locus of the
piecewise-linear  function
\[\Real^2\rightarrow \Real, \quad\alpha\mapsto max_{a\in\D\cap\Z^2}\{a\cdot\alpha +
\w(a)\},\] where $\w:\D\cap\Z^2\rightarrow\Z$ is
an integral-valued
function on $\D\cap\Z^2$.
The regular subdivision $\D_{\w}$ of $\D$ is dual
to the tropical curve  $\tau_{\w}$ in the
following sense (\cite[\S 2.5.1]{IMS}):
\begin{itemize}\item the components of $\Real^2\setminus
 \tau_{\w}$ are in  1-to-1 correspondence with $\mr{Vertices}(\D_{\w})$;
 \item the edges of $\tau_{\w}$ are in 1-to-1 correspondence with
 $\mr{Edges}(\D_{\w})$ so that an edge $e$ of $\tau_{\w}$ is dual to an
  edge of $\D_{\w}$ which is orthogonal to $e$ with the lattice length
  $\w(e)$;
 \item the vertices of $\tau_{\w}$ are in 1-to-1 correspondence
 with the 2-dimensional faces of $\D_{\w}$
  so that the valency of a vertex of $\tau_{\w}$ is
 equal to the number of sides of the dual face.
\end{itemize}
We call a tropical curve $\tau_{\w}$ to be simple
(respectively, triangular, nodal) if the dual
regular subdivision $\D_{\w}$ of $\D$ is simple
(respectively, triangular, nodal). The {\em rank}
of $\tau_{\w}$ is by definition the rank of
$\D_{\w}$.

\begin{eg}\label{eg:11} Let $f=(1+x+y)(1+tx+ty)$.  The picture below on the right is $\tau_f$ and one on the
left is the corresponding  subdivision of the
Newton polygon of $f$.\end{eg}

\setlength{\unitlength}{0.13mm}
\begin{picture}(0, 0) (-200,-55)
\curve(0,0,0,-80)\curve(0,-80,80,-80)
\curve(80,-80,0,0)\curve(0,-40,40,-80)
\put(-20,3){\tiny{$ty^2$}}
\put(-75,-43){\tiny{$(t+1)y$}}\put(-15,
-83){\tiny{$1$}}\put(16,-92){\tiny{$(t+1)x$}}
\put(87,-83){\tiny{$tx^2$}}\put(45,-43){\tiny{$2txy$}}
\put(-5,-6){\tiny{$\bullet$}}\put(-5,-45){\tiny{$\bullet$}}\put(-5,-85)
{\tiny{$\bullet$}}\put(35,-85){\tiny{$\bullet$}}
\put(75,-85){\tiny{$\bullet$}}\put(34,-45){\tiny{$\bullet$}}
\put(200,-53){\vector(-1,0){40}}
\put(200,-53){\vector(0,-1){30}}\put(209,-57){\tiny{$(0,0)$}}\put(197,-56){\tiny{$\bullet$}}
\curve(200,-53,220,-33)\put(218,-36){\tiny{$\bullet$}}
\put(230,-33){\tiny{$(1,1)$}}
\put(220,-33){\vector(-1,0){80}}\put(220,-33){\vector(1,1){40}}\put(220,-33){\vector(0,-1){40}}
\end{picture}\\\\
Now let us study the initial schemes of the curve
$V_{\K}(f)$, which are defined by the initial
polynomials of $f$. For this,  we rewrite
the coefficients of $f$,\\
$c_a(t)=\bar{c_a}t^{Val_f(a)}+l.o.t.,\quad
\bar{c_a}\in (\mb{C})^*,\quad a\in \D\cap\Z^2$ as
follows:
\begin{equation}c_a(t)=c_a^{\circ}\text{} t^{\nu_f(a)}+l.o.t.,\end{equation}\label{eq:11}
where $\nu_f$ is the concave hull of $Val_f$
(defined as in \ref{sec:subdivision}) and
$c_a^{\circ}$ is some complex number which is
zero if $\nu_f(a)>Val_f(a)$. Given  a point $\al$
which is either a vertex or a point in the
relative interior of an edge of $\tau_f$, the
maximum of $a\cdot\al+\nu_f(a)
\quad(a\in\D\cap\Z^2)$ is attained on the
corresponding dual face or edge in
$\D_{f}:=\D_{\nu_f}$. Therefore,
 \[\n\al f=\sum c_a^{\circ} x^a,\] where the sum
runs over the lattice points on the dual face or edge in $\D_{f}$.
 In particular, we consider the initial polynomials $\n{\al_i} f$,
 where $\al_i$ are the vertices  of $\tau_f$
 corresponding to the  faces
(2-dimensional subpolygons)  $\D_i$ in
$\D_f:\D_1\cup\dots\cup\D_m$.
 Then,  given  a curve $V_{\K}(f)$ in $\TK^2$ with $\mr{Newton}(f)=\D$ we
obtain a collection of complex curves
$V(\n{\al_i} f)$ in $\TC^2$ with
$\mr{Newton}(\n{\al_i} f)=\D_i ,\quad (i=1,\dots,m)$.
This collection of complex polynomials together
with the subdivision $\D_f$ completely determines
the tropicalization of the curve $V_{\K}(f)$ (or
of $f$).

Now we can present Shustin's characterizations of
the tropicalizations of nodal curves with a given
rank condition:
\begin{thm}\cite[\S 3.3]{Shustin2}\label{thm:Shustin's characterization} Let $V_{\K}(f)$ be a curve with $\delta$ nodal
singular points (i.e. ordinary double points) as
the only singularities, where $\delta$ is a
natural number with $\delta\le
|\mr{\mr{Int}}(\D)\cap\Z^2|$. Suppose that $\mr{rank}(\D_f)\ge
r$, where $r= |\D\cap\Z^2|-1-\delta$. Then the
following holds true:
\begin{enumerate}\item(combinatorial) the regular subdivision $\D_f:\D_1\cup\dots\cup\D_m$ is  simple, nodal,
and $\mr{rank}(\D_{f})= r$;
 \item(geometric)\begin{itemize}\item
for each triangle $\D_i$, the curve
$V(\n{\al_i}f)$ is rational and meets the union
of toric divisors $Tor(\partial\D_i)$ at exactly
three points, where it is unibranch;
\item for each
parallelogram $\D_j$, the polynomial $ \n{\al_j}f$
has the form \[x^ky^l(\alpha x^a + \beta
y^b)^p(\gamma x^c +\delta y^d)^q\] with $(a, b) =
(c, d) = 1, (a:b)\neq (c:d), \alpha,\beta,\gamma,\delta\in\C\setminus\{0\}$
\end{itemize}
\end{enumerate}
\end{thm}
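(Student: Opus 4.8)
\emph{Proof proposal.} The plan is to prove the statement by a two-sided estimate on $\mr{rank}(\D_f)$ followed by a cell-by-cell rigidity analysis, working throughout with the degeneration data attached to $\w=Val_f$. First I would invoke Proposition \ref{prop:Equisingular} to realize $V_{\K}(f)$ as the generic member of a one-parameter family $C^{(t)}$ of complex curves whose singularities, for $|t|\gg 0$, are in bijection with the $\delta$ nodes of $V_{\K}(f)$. As recorded after Proposition \ref{prop:Equisingular} and in Remark \ref{rem:Flat}, the subdivision $\D_f$ together with the initial curves $V(\n{\al_i}f)$ --- one complex curve $V(\n{\al_i}f)$ with Newton polygon $\D_i$ for each $2$-face of $\D_f$, viewed as a flat degeneration of $C^{(t)}$ near the corresponding vertex of $\tau_f$ --- completely determines the tropicalization. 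All subsequent combinatorics and geometry may therefore be read off from $\D_f$ and the polynomials $\n{\al_i}f$.

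The heart of the argument is the upper bound $\mr{rank}(\D_f)\le r$, which when combined with the hypothesis $\mr{rank}(\D_f)\ge r$ yields the equality in part (1). The mechanism is that the maximal possible value $|\D\cap\Z^2|-1$ of the rank is attained only by a unimodular triangulation, for which the generic curve is smooth of genus $|\mr{Int}(\D)\cap\Z^2|$; every node must therefore be paid for by a coarsening of $\D_f$ that strictly lowers the rank. To make this quantitative I would combine two ingredients: the rank formula $\mr{rank}(\D_f)=|\mr{Vertices}(\D_f)|-1-(\text{face defect})$, which specializes to the stated Proposition in the nodal case, and the upper semicontinuity of the total $\delta$-invariant along the degeneration of Remark \ref{rem:Flat}, which forces the combinatorial defect $|\D\cap\Z^2|-|\mr{Vertices}(\D_f)|+(\text{face defect})$ to be at least $\delta$. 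Rearranging gives $\mr{rank}(\D_f)\le |\D\cap\Z^2|-1-\delta=r$.

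Equality then propagates as tightness, and I would use it to extract the combinatorial and geometric conclusions simultaneously. Because each summand in the defect is nonnegative, $\mr{rank}(\D_f)=r$ forces each to be minimal. No boundary lattice point may be unused, so every such point is a vertex and $\D_f$ is simple. On the $2$-faces, a $k$-gon with $k\ge 5$ contributes more coplanarity constraints than the available defect permits and is excluded, leaving triangles and quadrilaterals; the cell-wise geometric analysis then refines this, showing that a surviving triangle must support a rational curve $V(\n{\al_i}f)$ that is unibranch along $\mr{Tor}(\partial\D_i)$ and meets it in exactly three points, while a surviving quadrilateral must be a parallelogram supporting a transverse crossing of two binomial curves, which pins $\n{\al_j}f$ to the product-of-binomials form $x^k y^l(\al x^a+\beta y^b)^p(\gamma x^c+\delta y^d)^q$ with $(a,b)=(c,d)=1$ and $(a:b)\ne(c:d)$. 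In particular $\D_f$ is nodal.

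The main obstacle is the quantitative node-accounting underlying the upper bound, namely proving that the combinatorial defect dominates $\delta$ and then that equality forces the minimal local models. The delicate points are that nodes may be hidden inside a single initial curve rather than visible as a parallelogram, that non-primitive parallelograms and non-unimodular triangles carry several nodes for a single unit of face defect, and that the limiting configuration is typically reducible, so that the naive identity (number of nodes) $=p_a-p_g$ fails and must be replaced by careful bookkeeping of the delta-invariant across the gluing strata. Controlling exactly how the $\delta$ nodes distribute among the cells, and showing that tightness forces each cell into its normal form, is precisely the local analysis carried out in \cite[\S 3.3]{Shustin2}; the constraint $\delta\le|\mr{Int}(\D)\cap\Z^2|$ together with the maximality of the rank are what keep this accounting consistent.
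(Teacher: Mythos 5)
Your two-sided-estimate-plus-tightness skeleton does match the overall shape of Shustin's argument, but as a proof of the stated theorem the proposal has two genuine gaps. First, the quantitative engine you propose --- ``upper semicontinuity of the total $\delta$-invariant along the degeneration of Remark \ref{rem:Flat} forces the combinatorial defect to be at least $\delta$'' --- is not an available tool in this situation. The special fiber of the relevant degeneration is not a reduced curve in a fixed surface: it is the union of the initial curves $V(\n{\al_i}f)$ lying on the degenerate, reducible toric surface $\bigcup_i X_{\D_i}$, and it can be non-reduced (over a parallelogram face the initial polynomial is $x^ky^l(\alpha x^a+\beta y^b)^p(\gamma x^c+\delta y^d)^q$, with $p,q\ge 2$ whenever the sides have lattice length at least $2$). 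So $\delta$-semicontinuity for flat families of reduced curves does not apply, and the ``careful bookkeeping of the delta-invariant across the gluing strata'' that you invoke is not a repair of your tool but its wholesale replacement: it is exactly Shustin's estimate of the topological Euler characteristic $\check{\chi}(C^{(t)})$ of the \emph{normalization}, bounded from above by an argument valid for any flat deformation and from below via the genus formula for a $\delta$-nodal curve. At the only point where your outline must produce a number, it cites \cite[\S 3.3]{Shustin2}; also note that the rank formula with a general ``face defect'' that you use is stated in the paper only for nodal subdivisions and would itself require proof.

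Second --- and this is the issue the paper's own treatment exists to address --- Shustin's theorem in \cite[\S 3.3]{Shustin2} is stated and proved under a \emph{base-point} condition (the curve passes through $r$ points whose coordinatewise valuations are generic), not under the rank condition $\mr{rank}(\D_f)\ge r$ of the present statement. A proof that defers its core step to Shustin's analysis must therefore explain why that analysis remains valid under the different hypothesis. This is precisely the content of the paper's discussion: the upper bound for $\check{\chi}(C^{(t)})$ in Shustin's proof holds for an arbitrary flat deformation, and genericity of the points enters only through the chain $\check{\chi}(C^{(t)})=2-2g(C^{(t)})=2|\partial\D\cap\Z^2|-2r\ge 2|\partial\D\cap\Z^2|-2\,\mr{rank}(\D_f)$, i.e.\ only via the inequality $r\le\mr{rank}(\D_f)$, so the rank condition can be substituted for the base-point condition. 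Your proposal never engages with this hypothesis translation, so as written it establishes (by citation) Shustin's original statement rather than the one at hand. To salvage your route you would need to verify that the node accounting you rely on is exactly the hypothesis-free part of Shustin's proof --- which is what the paper's displayed computation does.
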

Shustin's original statement(\cite[\S
3.3]{Shustin2}) has the following base-point
condition instead of the rank condition,
$\mr{rank}(\D_f)\ge r$: Suppose that the curve
$V_{\K}(f)$ passes through $r$ generic points
$p_1,\dots,p_r\in \TK^2$  such that
$q_1=Val(p_1),\dots, q_r=Val(p_r)\in\Real^2$
(taking valuations coordinatewise) are generic
points (Definition \ref{defn:GeneralPosition}),
where $r=|\D\cap\Z^2|-1-\delta$.

 We
explain why we can replace the base-point
condition by the rank condition:  His proof is by
estimating $\check{\chi}(C^{(t)})$, the
topological Euler characteristic of the
normalization of the complex curve
$C^{(t)}=V(f_t)$ (see Proposition
\ref{prop:Equisingular}) from above and from
below and comparing the bounds.
The upper bound of $\check{\chi}(C^{(t)})$ in his
computation holds true for any flat deformation.
For a lower bound, he uses the following
inequality:
\[\begin{array}{ccl}
\check{\chi}(C^{(t)}) &=&2-2g(C^{(t)})\\ &=& 2-2
(|\mr{Int}(\D)\cap\Z^2|-\delta)\\
&=&2-2|\mr{Int}(\D)\cap\Z^2|+
2(|\D\cap\Z^2|-1-r)\\&=&2|\partial\D\cap\Z^2|-2r\\
&\ge& 2|\partial\D\cap \Z^2|-2 \mr{rank}(\D_f)
\end{array}\]
In the second equality, he used the fact that
$C^{(t)}$ has $\delta$ nodal points. The last
inequality follows from the condition, $r\le
\mr{rank} (\D_f)$. In fact, Shustin called the
points $q_1,\dots, g_r$ to be generic when $r\le
\mr{rank} (\D_f)$. (Definition
\ref{defn:GeneralPosition})

\subsection{Tropical Intersection Theory}\label{sec:IT}
\cite{AR}, \cite{Katz2}, \cite{Kazarnovskii2},
\cite{Kazarnovskii3}

Tropical cycles  form a graded commutative
algebra $\mf{A}$.
To a subvariety $X$ of the algebraic torus
$\T_{\K}=\mr{Spec}(\K[\Z^n])$, we can assign an
element $\Tr(X)$ of $\mf{A}$, namely the
tropicalization of $X$. This correspondence
determines an intersection theory of subvarieties
of $\T_{\K}$.
 In this paper,
we only summarize about the product
$\mc{T}_1\cdot\mc{T}_2$ when $\mc{T}_1$ and
$\mc{T}_2$ are  complementary dimensional
tropical varieties in $\Real^n$. The support of
the product $\mc{T}_1\cdot\mc{T}_2$ is by
definition the zero-dimensional strata of the set
$supp(\mc{T}_1)\cap supp(\mc{T}_2)$. It is a
finite set of points in $\Real^n$. The weighting
function $\bsm=\bsm_{\mc{T}_1\cdot\mc{T}_2}$ in
the product $\mc{T}_1\cdot\mc{T}_2$ is defined as
follows:
let $\w\in supp(\mc{T}_1)\cap supp(\mc{T}_2)$.
There are two possible
cases that  $\w$ is the intersection point of a transversal intersection or not.

\setlength{\unitlength}{0.13mm}
\begin{picture}(110,0)(-200,40) 
\curve(0,0,-30,0)\curve(0,0,0,-15)\put(3,-18){\tiny{$2$}}\curve(0,0,10,20)
\curve(10,20,-40,20)\curve(10,20,30,40)\put(25,23){\tiny{$2$}}
\curve(30,40,30,60)\curve(30,40,50,50)\curve(30,60,-40,60)
\curve(30,60,45,75)\curve(50,50,75,50)\curve(50,50,80,80)
\curve(75,50,105,80)\curve(75,50,75,-20)
\put(40,-35){\tiny{$\mc{T}_1$}}
\end{picture}

\begin{picture}(110,0) (-450,-20) 
\linethickness{0.3mm}
 \curve(0,0,-30,0)
 \curve(0,0,0,-30)
 \curve(0,0,50,50)
 \put(0,-65){\tiny{$\mc{T}_2$}}
\end{picture}
\begin{picture}(310,0)(-640,10) 
\curve(0,0,-30,0)\curve(0,0,0,-15)\put(3,-18){\tiny{$2$}}
\curve(0,0,10,20)
\curve(10,20,-40,20)\curve(10,20,30,40)\put(25,23){\tiny{$2$}}
\curve(30,40,30,60)\curve(30,40,50,50)\curve(30,60,-40,60)
\curve(30,60,45,75)\curve(50,50,75,50)\curve(50,50,80,80)
\curve(75,50,105,80)\curve(75,50,75,-20)
\put(-30,-35){\tiny{$supp(\mc{T}_1)\cap
supp(\mc{T}_2)$}}
\put(-5,-6){\tiny{$\bullet$}}\put(7,0){\tiny{$\w_2$}}
\end{picture}
\begin{picture}(130,0)(-322,-31) 
\linethickness{0.3mm}
 \curve(0,0,-30,0)
 \curve(0,0,0,-50)
 \curve(0,0,50,50)
 \put(17,18){\tiny{$\bullet$}}\put(12,39){\tiny{$\w_1$}}
\end{picture}\\\\\\
In the first case, $\w$ is a regular point of
each $\mc{T}_i$ and so $\mc{T}_i$ is equal to
$L_i$ locally near $\w, i=1,2$, where $L_1$ and
$L_2$ are affine spaces of complementary
dimensional.
\begin{enumerate}\item \label{extrinsic}The \emph{extrinsic} intersection
multiplicity of  $\mc{T}_1$ and $\mc{T}_2$ at
$\w$, denoted by $\xi(\w;\mc{T}_1,\mc{T}_2)$, is
the volume of the parallelepiped constructed by
the fundamental cells of the lattices
$\mb{L}_i\cap\Z^n,(i=1,2)$ (``principal
parallelepiped'')
\item \label{tropical}The \emph{tropical} intersection multiplicity of $\mc{T}_1$
and $\mc{T}_2$ at $\w$ is
\[\bsm(\w)=\bsm(\w;\mc{T}_1,\mc{T}_2):= \bsm_{\mc{T}_1}(\w)\cdot\bsm_{\mc{T}_2}(\w)\cdot
\xi(\w;\mc{T}_1,\mc{T}_2)\]
\end{enumerate}
In the second case, $\w$ is not a regular point
of either $\mc{T}_1$ or $\mc{T}_2$. However, by a
small local displacement of $\mc{T}_1$ and
$\mc{T}_2$, we can achieve the transversality
near $\w$. (The details can be found in
\cite{FS}, \cite{MikhalkinICM}). Then $\bsm(\w)$
is by definition the sum of all
$\bsm({\tilde{\w}})$, where $\tilde{\w}$'s are
the transversal intersection points appearing in
the displacement.

When $\mc{T}_i=\Tr(X_i)$ for subvarieties $X_i$
of $\T_{\K} (i=1,2)$, the sum of weights on
the product $\mc{T}_1\cdot\mc{T}_2$ is equal to
the number of intersection points in $X_1\cap
gX_2$ for a generic $g\in\T_{\K}$. It is called
the {\em degree} of the product
$\mc{T}_1\cdot\mc{T}_2$ and denoted by
\[(\Tr(X_1)\cdot\Tr(X_2)).\]
For example,

\begin{itemize}

\item the degree of the product of  tropicalizations of $n$ hypersurfaces in $\T_{\C}$
is  the mixed volume of the Newton polyhedra of
the hypersurfaces times $n!$; (compare
\cite{Bernstein}.)
\item let $\mb{T}_1$ and $\mb{T}_2$ be two subtori of $\T_{\C}$ of complementary
dimension. Then their tropicalizations are
rational linear subspaces of  $\Real^n$ with
constant multiplicity $1$ and they are of
complementary dimension.  The degree of the
product $\Tr(\mb{T}_1)\cdot\Tr(\mb{T}_2)$ is
equal to the (normalized) volume of the
parallelepiped defined by the fundamental cells
of the lattices,  $\Tr(\mb{T}_1)\cap\Z^n$ and
$\Tr(\mb{T}_2)\cap\Z^n.$
\end{itemize}

\section{Intermediate parameter spaces}\label{sec:intermediate} In this section, we study certain parameter spaces of curves on toric surfaces, which
 are closely related to the initial
schemes of Severi varieties. They are defined by
considering Shustin's characterization (Theorem
\ref{thm:Shustin's characterization}) in the view
of parameter spaces and turn out to be very
simple.

Let us consider    the projective toric surface
$X_{\D}$ constructed from a 2-dimensional lattice
polygon $\D$ in $\Real^2$. That is,
$X_{\D}\subset\Pro^{n-1}=\Pro(\C^n)$ is the
closure of the set
\[X_{\D}^{\circ}=\{(x^{a_1}:\dots:x^{a_n}):x=(x_1,x_2)\in\TC^2\},\] where $\D\cap\Z^2=\{a_1,
\dots,a_n\}$.
We can identify the projectivization of the dual
space of $\C^n$, $\Pro((\C^n)^*)$,  as the
projectivization of the vector space of Laurent
polynomials whose Newton polygons are subsets of
the polygon $\D$, which is called the
tautological linear system of curves on the toric
surface $X_{\D}$ and denoted by $\Pro_{\D}$. We
study several subvarieties $\mb{V}_{\bullet}$ of
this linear system $\Pro_{\D}$. The study of
$\mb{V}_{\bullet}$ was motivated by trying to
understand the initial schemes of Severi
varieties. However, besides the roles as building
blocks to understand the Severi varieties, the
author believes that the $\mb{V}_{\bullet}$s have
their own independent interests and also they may
be generalized in many different
perspectives. Let $\T_{\D}$ be the big open torus of $\Pro_{\D}$.
%
%
\subsection{$\mb{V}_{\partial\mc{S}(\D),\Pro^1}$}
\label{sec:SD,rational}
\begin{defns}
 Let $\mb{V}_{\partial\D}$ denote the set of all $f\in\mb{T}_{\D}$ such that the restriction of $f$ on each
 edge of $\D$ is a pure power of a binomial (up to multiplication by a
 monomial), i.e. of the form of $x^ay^b(\alpha x^c+\beta y^d)^s$,
 where $a,b,c,d\in\Z, \alpha, \beta\in\mb{C}^*$ and $s$ is the lattice
 length of the edge.
 Geometrically, points  of $\mb{V}_{\partial\D}$ correspond to curves on the toric surface $X_{\D}$ such that
 they cross the union of the toric divisors
 at precisely $l$ points, where $l$ is the number of edges of $\D$. More generally,  we consider subdivisions $\SD$ of the polygon
$\D$. Let $\mb{V}_{\partial\mc{S}(\D)}$ denote the set
of all $f\in\T_{\D}$ such that
$f_{\D_i}\in\mb{V}_{\partial\D_i}$ for every
$\D_i\in \mr{Faces}(\SD)$, where $f_{\D_i}$ is the
 restriction of $f$ on $\D_i$. Let $\mb{V}_{\partial\mc{S}(\D),\Pro^1}$ denote the set of all $f\in\mb{V}_{\partial\mc{S}(\D)}$
 such that $f_{\D_i}$ defines a rational curve which is unibranch at each intersection point with the boundary divisors of the toric surface $X_{\D_i}$ for every
$\D_i\in \mr{Faces}(\SD)$.
\end{defns}

\begin{lem}\label{lem:parameterization}\cite[Lemma 3.5.]{Shustin2} If $\D=\blacktriangle$ is a triangle,
every
$f\in\mb{V}_{\partial\blacktriangle,\Pro^1}$ can
be given by the following rational
parametrization,
\[\ta\mapsto(\al\ta^{s_1v_{11}}(\ta+1)^{s_2v_{21}}, \be\ta^{s_1v_{12}}(\ta+1)^{s_2v_{22}}),\]
where $\al,\be\in\TC^2$ and $v_1=(v_{11},v_{12}),
v_2=(v_{21},v_{22})$ are two vectors among the
three primitive inner-normal vectors to the edges
of the triangle $\blacktriangle$, and $s_1$ and
$s_2$ are the lattice lengths of the
corresponding edges of $\blacktriangle$.
\end{lem}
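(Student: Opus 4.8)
The plan is to reconstruct the parametrization from the normalization of the curve $V(f)$ and then pin down its exponents by toric intersection theory on $X_{\blacktriangle}$. Write $D_1,D_2,D_3$ for the three toric boundary divisors of $X_{\blacktriangle}$, attached to the edges $e_1,e_2,e_3$ of $\blacktriangle$ with primitive inner-normal vectors $v_1,v_2,v_3$ and lattice lengths $s_1,s_2,s_3$; recall the balancing relation $s_1v_1+s_2v_2+s_3v_3=0$. Since $f\in\mb{V}_{\partial\blacktriangle,\Pro^1}$, the restriction $f|_{e_i}$ has the form $x^ay^b(\al_i x^{c_i}+\be_i y^{d_i})^{s_i}$, whose only zero on the one-dimensional torus of $D_i$ is a single point $p_i$ of multiplicity $s_i$; hence $V(f)$ meets $D_i$ only at $p_i$, with $(V(f)\cdot D_i)=s_i$.

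First I would normalize. Because $f$ defines a rational curve that is unibranch at each $p_i$, its normalization is a morphism $\gamma:\Pro^1\To X_{\blacktriangle}$, birational onto $V(f)$, with $\gamma^{-1}(p_i)$ a single point $q_i$. The $q_1,q_2,q_3$ are distinct, and since $\mr{PGL}_2(\C)$ acts sharply $3$-transitively on $\Pro^1$, after an automorphism I may assume $q_1=0$, $q_2=-1$, $q_3=\infty$. As $V(f)\cap\partial X_{\blacktriangle}=\{p_1,p_2,p_3\}$ and $\gamma^{-1}(\partial X_{\blacktriangle})=\{0,-1,\infty\}$, the map $\gamma$ sends $\Pro^1\setminus\{0,-1,\infty\}$ into the open torus $\TC^2$. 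Thus $X(\ta):=x\circ\gamma$ and $Y(\ta):=y\circ\gamma$ are rational functions on $\Pro^1$ whose zeros and poles lie only at $0,-1,\infty$, so each is a nonzero constant times a monomial in $\ta$ and $\ta+1$: $X=\al\,\ta^{a_1}(\ta+1)^{a_2}$ and $Y=\be\,\ta^{b_1}(\ta+1)^{b_2}$ with $a_i,b_i\in\Z$ and $\al,\be\in\C^*$.

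The key step is to identify these exponents. For $m=(m_1,m_2)\in\Z^2$ one has $\mr{div}(\chi^m)=\sum_i\langle m,v_i\rangle D_i$ on $X_{\blacktriangle}$, so pulling back gives $\mr{div}(X^{m_1}Y^{m_2})=\gamma^*\mr{div}(\chi^m)=\sum_i\langle m,v_i\rangle\,\gamma^*D_i$. Since $\gamma$ is birational onto $V(f)$ and $\gamma^{-1}(D_i)=\{q_i\}$ with $(V(f)\cdot D_i)=s_i$, the pullback divisor is $\gamma^*D_i=s_i\,q_i$, whence $\mr{ord}_{q_i}(X^{m_1}Y^{m_2})=s_i\langle m,v_i\rangle$. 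Taking $m=(1,0)$ and $m=(0,1)$ and reading off the orders at $q_1=0$ and $q_2=-1$ yields $a_1=s_1v_{11}$, $b_1=s_1v_{12}$, $a_2=s_2v_{21}$, $b_2=s_2v_{22}$; the orders at $q_3=\infty$ come out to $s_3\langle m,v_3\rangle$ automatically, in agreement with $\mr{ord}_\infty=-(\mr{ord}_0+\mr{ord}_{-1})$ via the balancing relation. Substituting gives exactly the asserted parametrization, with $v_1,v_2$ the two inner normals attached to $q_1,q_2$.

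The normalization and the $\mr{PGL}_2$ reduction are routine; the main obstacle is the bookkeeping of the last step. One must verify carefully that the binomial-power hypothesis really forces a single intersection point of multiplicity $s_i$, so that $\gamma^*D_i$ is concentrated at one point, and then match the toric order formula to the exponents with the correct sign and orientation convention for the inner normals, using the balancing relation to absorb the point at infinity. Once this is in place, the exponents are determined and only the torus constants $\al,\be\in\C^*$ remain free, which is precisely the content of the lemma.
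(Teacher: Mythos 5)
Your argument is correct, but note that the paper itself contains no proof of this lemma to compare against: it is imported verbatim from Shustin's Lemma 3.5, and your reconstruction is in essence the standard argument behind that lemma (normalize the rational curve, use sharp $3$-transitivity of $\mr{PGL}_2(\C)$ to place the three boundary preimages at $0,-1,\infty$, observe that the pulled-back coordinate functions have divisors supported on these three points, and pin down the exponents by the toric order formula $\mr{div}(\chi^m)=\sum_i\langle m,v_i\rangle D_i$ together with $\gamma^*D_i=s_i\,q_i$ and the balancing relation $s_1v_1+s_2v_2+s_3v_3=0$). Two points you flag as ``bookkeeping'' are worth making explicit, and both go through. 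First, since $f\in\T_{\D}$ all coefficients of $f$ are nonzero, in particular those at the vertices of $\blacktriangle$; hence the closure $\overline{V(f)}$ avoids the torus-fixed points, so each $p_i$ lies in the open orbit of $D_i$ and all boundary intersections occur in the smooth locus of $X_{\blacktriangle}$ (the surface may be singular at the corners, and this is what legitimizes both the Weil-divisor formula for $\mr{div}(\chi^m)$ along the curve and the identification of $\gamma^*D_i$ with a divisor on $\Pro^1$). Second, the scheme $\overline{V(f)}\cap D_i$ is cut out on the one-dimensional orbit of $D_i$ by the edge restriction $f|_{e_i}$; since this restriction is the pure binomial power $x^ay^b(\al_i x^{c_i}+\be_i y^{d_i})^{s_i}$, the intersection is a single point of multiplicity exactly $s_i$, which combined with unibranchness gives $\gamma^*D_i=s_i\,q_i$ as you claim. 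With these two facts in place the exponent identification via $m=(1,0)$ and $m=(0,1)$ is forced, the order at $\infty$ is automatic from the balancing relation, and only $(\al,\be)\in\TC^2$ remains free, which is precisely the statement of the lemma.
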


\begin{thm}\label{thm:SD,rational}
 Suppose that $\mc{S}(\D)$ is triangular, that is, every face $\D_i$ of $\SD$ is a triangle. Then the following hold true.
\begin{enumerate}
 \item $\mb{V}_{\partial\mc{S}(\D),\Pro^1}$ is a translation of a
 closed subgroup of the torus $\T_{\D}$.
 \item Its dimension is equal to $|\mr{Vertices}(\mc{S}(\D))|-1$.

\end{enumerate}
\end{thm}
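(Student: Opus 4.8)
The plan is to treat each triangle of $\SD$ as a building block, using Lemma~\ref{lem:parameterization}, and then to glue. For a single triangle $\D_i$, Lemma~\ref{lem:parameterization} shows that every $f\in\mb{V}_{\partial\D_i,\Pro^1}$ is obtained from the base polynomial $f^0_i$ (the case $\al=\be=1$) by the substitution $(x,y)\mapsto(\al x,\be y)$; passing to defining polynomials, $f$ is the image of $f^0_i$ under the action of $\TC^2$ on the coefficient torus $\T_{\D_i}$ coming from the torus action on $X_{\D_i}$. Thus $\mb{V}_{\partial\D_i,\Pro^1}$ is the orbit of the image subtorus through $f^0_i$, and since an orbit of a subtorus through a point is a coset, it is a translate of a subtorus of $\T_{\D_i}$. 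Equivalently, it is cut out by binomial equations of the shape $\prod_a c_a^{m_a}=\gamma_m$ (monomial $=$ constant), where $m$ ranges over the lattice of affine relations among the points of $\D_i\cap\Z^2$.

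Next I would reduce the global statement to a single binomial system on $\T_\D$. Let $\rho_i:\T_\D\to\T_{\D_i}$ be the restriction homomorphism $f\mapsto f_{\D_i}$, which is a homomorphism of tori. Then $\mb{V}_{\partial\mc{S}(\D),\Pro^1}=\bigcap_i \rho_i^{-1}\bigl(\mb{V}_{\partial\D_i,\Pro^1}\bigr)$. Because the preimage under a torus homomorphism of a binomial coset is again a binomial coset, and an intersection of binomial cosets is a binomial coset, $\mb{V}_{\partial\mc{S}(\D),\Pro^1}$ is cut out in $\T_\D$ by equations monomial $=$ constant. By the standard structure theorem for such systems it is, \emph{provided it is nonempty}, a translate of a closed subgroup $H$ of $\T_\D$; this is assertion (1).

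For the dimension I would compute $\dim H$ via its Lie algebra, linearizing in logarithmic coordinates $\psi_a=\log c_a$. The homogeneous form of the relations for $\D_i$ says precisely that $\psi$ restricted to $\D_i\cap\Z^2$ is the restriction of an affine-linear function of $a$. Hence the Lie algebra of $H$ is $W=\{\psi:\D\cap\Z^2\to\C \mid \psi|_{\D_i}\text{ is affine-linear for every }i\}/\C\cdot\mf{1}$. The evaluation map $\psi\mapsto(\psi(p))_{p\in\mr{Vertices}(\SD)}$ is an isomorphism $W\xrightarrow{\sim}\C^{\mr{Vertices}(\SD)}/\C\cdot\mf{1}$: it is injective because an affine-linear function vanishing at the three vertices of a triangle vanishes on it, and surjective because prescribing values at the vertices and affine-interpolating on each triangle is consistent along a shared edge, the interpolation there depending only on the two endpoints (a subdivision in the present sense has no $\SD$-vertex interior to a face-side). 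Therefore $\dim\mb{V}_{\partial\mc{S}(\D),\Pro^1}=\dim W=|\mr{Vertices}(\SD)|-1$, which is (2).

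The step requiring real work is nonemptiness, i.e.\ consistency of the binomial system; without it the coset statement is vacuous. I would construct an $f_0$ triangle by triangle over a spanning tree of the adjacency graph $\SD^*$: once the coefficients on one triangle are fixed, the two-parameter freedom $(\al_j,\be_j)$ of an adjacent triangle $\D_j$ suffices to match the single projective parameter carried by a shared edge, since on an edge the restriction is a pure power of a binomial. The genuine difficulty is closing up the cycles of $\SD^*$, one for each interior vertex of $\SD$: around such a cycle the successive edge-matchings must be globally consistent (trivial monodromy). Comparing the naive parameter count $3|\mr{Faces}(\SD)|-2\cdot(\#\text{interior edges})-1$ with $|\mr{Vertices}(\SD)|-1$ shows, through Euler's relation, that these cyclic conditions are exactly the expected dependencies, so they should not be independent constraints; I expect to verify their consistency directly from the binomial data around each interior vertex, or, when $\SD$ is regular, to exhibit $f_0$ from a concave piecewise-linear function realizing $\SD$ by patchworking. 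This interior-vertex consistency is where I expect the main effort to lie.
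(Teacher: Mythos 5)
Your reduction of statement (1) to a nonemptiness claim is sound, and your dimension argument is correct and genuinely different from the paper's. The paper constructs the subgroup $\mb{G}$ explicitly in coordinates $(\al_i,\be_i,\gamma_i)$, proves $\mb{V}=f\cdot\mb{G}$ by checking both inclusions, and gets the dimension by observing that the rows of the exponent matrix $M$ (one per inner edge) are linearly independent and then invoking the Euler-relation identity $2|\mr{Triangles}(\SD)|-|\mr{IEdges}(\SD)|=|\mr{Vertices}(\SD)|-1$ (Lemma \ref{lem:Vertices-1}). Your route — each $\mb{V}_{\partial\D_i,\Pro^1}$ is a coset of a subtorus of $\T_{\D_i}$ by Lemma \ref{lem:parameterization}, preimages under the restriction homomorphisms $\rho_i$ and intersections of cosets are again cosets \emph{when nonempty}, and the Lie algebra of the ambient subgroup is the space of $\SD$-piecewise affine-linear functions on $\D\cap\Z^2$ modulo constants, identified with $\C^{\mr{Vertices}(\SD)}/\C\cdot\mathbf{1}$ by evaluation — is cleaner and more conceptual, and your interpolation argument correctly uses the fact that a shared edge is a full edge of both adjacent triangles.

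However, the nonemptiness of $\mb{V}_{\partial\SD,\Pro^1}$, which you explicitly defer, is a genuine gap, and it is exactly the content of Step 1 of the paper's proof; without it both statements are vacuous. Your parameter count showing that the cycle-closing conditions are "expected dependencies" is a heuristic, not a proof, and your patchworking fallback cannot work: the theorem does not assume $\SD$ regular, and Shustin's patchworking takes a collection of face polynomials — i.e.\ a point of $\mb{V}$ — as its input, so it cannot be used to produce one. The paper dissolves, rather than verifies, the monodromy problem you worry about, using two ingredients: (a) the stronger form of Lemma \ref{lem:parameterization} (Shustin's Lemma 3.5), namely that for a single triangle the three parameters $(\al,\be,\gamma)$ can be chosen to match prescribed restrictions on \emph{any two} of the three edges, provided the two prescriptions agree at the common vertex (the relevant binomial system has independent exponent vectors, hence is solvable in the torus); and (b) the combinatorial fact from \S\ref{sec:subdivision} that the adjacency graph $\SD^*$ admits an acyclic orientation with no sink at $3$-valent vertices. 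Processing the triangles in a topological order of such an orientation, each triangle meets at most two previously fixed edges, and the prescriptions on those edges automatically agree at their common vertex because agreement was enforced at earlier steps; so the two-edge lemma applies at every stage and one never has to close a cycle. Supplying this argument (or some substitute for it) is required to complete your proof.
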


\begin{proof} There are three steps to complete the proof.
 First, we show that $\mb{V}:=\mb{V}_{\partial\mc{S}(\D),\Pro^1}$ is not empty.
Second, we construct a closed subgroup $\mb{G}$
of $\T_{\D}$ with dimension
$|\mr{Vertices}(\mc{S}(\D))|-1$.
Last, we show that $\mb{V}$ is equal to the
translation $f\cdot\mb{G} $ of any point
$f\in\mb{V}$.

{\em  Step 1.} From the Lemma
\ref{lem:parameterization}, we know that for a triangle
$\D$ any element in $\mb{V}_{\partial\D,\Pro^1}$
is uniquely determined by an element
$(\al,\be)\in\TC^2$ by assuming  that one of  the vertices of $\D$ is the
  origin and the constant term of
an equation is always 1. Let us denote this
element by $f^{(\al,\be)}$.  We extend this
argument to the many-triangles case,
$\mc{S}(\D):\D_1\cup\cdots\cup\D_m$. We know that
we can always find a $(\al,\be)\in\TC^2$ such
that $f^{(\al,\be)}$ (up to the multiplication by
a monomial) satisfies a given prescription on any
two of the three edges of a triangle $\D$. That
is, the following data are prescribed: the
coefficients of the equation $f^{(\al,\be)}$ at
the vertices of $\D$  and the intersection points
of the rational curve (defined by
$f^{(\al,\be)}$) with toric divisors
corresponding to two of the three edges of $\D$.
(for a proof, see \cite[Lemma 3.5.]{Shustin2})
Now we choose an orientation  on the adjacency
graph $\SD^*$ of $\SD$ which has no oriented
cycle and no sink at vertices of 3-valency. (see
\S \ref{sec:subdivision} for details.)
It is clear that such oriented adjacency graph
provides an algorithm to construct a point in
$\mb{V}$. That is, we can choose a consistent
collection of $(\al,\be)_{\D_i}$ for sub-polygons
 $\D_i$ in $\SD$. Therefore $\mb{V}$ is not empty.

{\em  Step 2.} Choose a linear order,
$\D_1,\dots,\D_m$, in the set of triangles in the
subdivision $\mc{S}(\D)$. We choose one of the
vertices of $\D_1$ and assume that it is the
origin.  We order the set of inner edges in
$\mc{S}(\D)$ in the following way: Choose all
inner edges belonging to $\D_1$(there are at most
three such edges). Put them in an order. Then,
choose all inner edges belonging to $\D_2$ except
the ones which may belong to $\D_1$. Add them in
an order to the first set. In this way, we put a
linear order in the set of all inner edges.
Each inner edge defines two binomial equations as
follows: Let $l=s_{ij}$ be the inner edge shared
by $\D_i$ and $\D_j$, $i<j$. Let $a=(a_1,a_2)$ be
the lattice point of one of the two ends of $l$
and let $v=(v_1,v_2)$ be the primitive vector
along $l$ from $a$.
\begin{equation}
 \gamma_i\al_i^{a_1}\be_i^{a_2}-\gamma_j\al_j^{a_1}\be_j^{a_2}=0;\end{equation}

 \begin{equation}
 \al_i^{v_1}\be_i^{v_2}-\al_j^{v_1}\be_j^{v_2}=0
\end{equation}

\setlength{\unitlength}{0.3mm}
\begin{picture}(0,0)(-40,-30)
 \curve(0,0,80,0)\curve(0,0,10,30)\curve(10,30,80,0)
\curve(0,0,20,-20)\curve(20,-20,80,0)\put(-2,-2){\tiny{$\bullet$}}
\put(-5,4){$a$}\put(30,10){$\D_i$}\put(30,-10){$\D_j$}\put(15,5){$v$}\vector(1,0){20}
\end{picture}

We collect the binomials for all inner edges and
add one more
binomial, $\gamma_1=1$. Let us denote this system by $(\bigstar)$.
Then it is clear that this system is uniquely
determined by the following system,
\[\gamma_1=1 ;\]
\[\al_i^{v_1}\be_i^{v_2}\al_j^{-v_1}\be_j^{-v_2}=1(\bigstar\bigstar)\label{binomials}\] where
the monomials in the left hand side of the equations are collected for all the inner edges.
Let $\mb{G}$ be the closed subgroup in the torus $\TC^3\times\cdots\times\TC^3$ with coordinates
$(\bs{\al},\bs{\be},\bs{\gamma})=((\al_1,\be_1,\gamma_1),\dots,
(\al_m,\be_m,\gamma_m))$ defined by the system
$(\bigstar)$.
Let $M:=M_{\partial\mc{S}(\D),\Pro^1}$ be the
matrix corresponding to the monomials in the left
hand side of equations in ($\bigstar\bigstar$)
where the rows are indexed by the inner edges and
the columns are indexed by  $(\al_i, \be_i),
i=1,\dots,m$. It is straightforward to see that
the rows of $M$ are linearly independent.
Therefore,
\[\mr{dim}(\mb{G})=2|\mr{\mr{Triangles}}
(\mc{S}(\D))|-|\mr{IEdges}(\mc{S}(\D))|,\] where
$\mr{IEdges}(\mc{S}(\D))$ is the set of all inner
edges in $\SD$.  Also, by the  Lemma
\ref{lem:Vertices-1},
\[\mr{dim}(\mb{G})=|\mr{Vertices}(\mc{S}(\D))|-1.\] Now we embed
$\mb{G}$  into $\T_{\D}$ in the following way,
\[ \Phi=(\Phi_{(w_1,w_2)})_{(w_1,w_2)\in\D\cap\Z^2}: \mb{G}\rightarrow \T_{\D},\]
\[ \Phi_{(w_1,w_2)}((\al_1,\be_1,\gamma_1),\dots,(\al_m,\be_m,\gamma_m))=\gamma_k\al_k^{w_1}\be_k^{w_2},\]
where $(w_1,w_2)\in\D_k\cap\Z^2, k=1,\dots,m$.
This map is well-defined because $\mb{G}$
satisfies the system $(\bigstar)$. Also this map is injective by the Lemma \ref{lem:injective}.

{\em Step 3.} Let us first show that
$f\cdot\mb{G} \subset\mb{V}$ for any $f\in
\mb{V}$. (We don't distinguish $\mb{G}$ from its
image under the embedding $\Phi$.) Let
$(\bs{\al},\bs{\be},\bs{\gamma})\in\mb{G}$. The
restriction of
$f\cdot(\bs{\al},\bs{\be},\bs{\gamma}) $ on
$\D_k$ is given by $\gamma_kf_{\D_k}(\al_kx,
\be_k y)$, which is a point in $\mb{V}_{\D_k,
\Pro^1}  (k=1,\dots, m)$. Thus, it is enough to
show that the restrictions of
$f\cdot(\bs{\al},\bs{\be},\bs{\gamma}) $ on all
sub-triangles coincide along the inner edges. It
follows from
the fact that $\mb{G}$ satisfies the system $(\bigstar)$.
Now we show that the other inclusion also holds.
Let $h,h'\in\mb{V}$. Then the restriction of $h$
(resp. $h'$) on $\D_k$ has the following form up
to the multiplication  by a monomial
$x^{b_1}y^{b_2}$,
\[h_{\D_k}(x,y)=\gamma_k(f^{(\al_k,\be_k)})=\gamma_k f^{(\al_k,\be_k)}(\al_kx,\be_ky)\]
\[h'_{\D_k}(x,y)=\gamma_k'(f^{(\al_k',\be_k')})=\gamma_k'f^{(\al_k',\be_k')}(\al_k'x,\be_k'y)
,\] for some $(\al_k,\be_k,\gamma_k)$ (resp.
$(\al_k',\be_k',\gamma_k')\in\TC^3$), $(b_1,b_2)\in\D\cap\Z^2
$, where
$k=1,\dots,m$.
Thus
$h'_{\D_k}(x,y)=\gamma_k'\gamma_k^{-1}h_{\D_k}(\al_k^{-1}\al_k'x,
\be_k^{-1}\be_k'y)$.
That is, $h'_{\D_k}$ is the restriction  of
$(\bs{\al}^{-1}\bs{\al}',\bs{\be}^{-1}\bs{\be}',\bs{\gamma}'\bs{\gamma}^{-1})\cdot
h$ on $\D_k$. Therefore \[h'=
(\bs{\al}^{-1}\bs{\al}',\bs{\be}^{-1}\bs{\be}',\bs{\gamma}'\bs{\gamma}^{-1})\cdot
h.\]
We have completed the proof.
\end{proof}
\begin{rem}
We can compute the number of components
of $\mb{V}_{\partial\SD,\Pro^1}$  easily from the
matrix $M_{\partial\SD, \Pro^1}$.  It is equal to
the greatest common divisor of all the absolute
values of $l\times\l$ minors of $M_{\partial\SD,
\Pro^1}$. Also it is equal to the number of
lattice points in the parallelepiped
$P=\{x_1v_1+\cdots+x_lv_l: 0\le x_i <1,
i=1,\dots,l\}$, where $v_1,\dots, v_l$ are the
row vectors of $M_{\partial\SD, \Pro^1}$.
\end{rem}

\begin{lem}\label{lem:Vertices-1} If a subdivision $\SD$ of $\D$  is
triangular, then
 \[2|\mr{\mr{Triangles}}(\mc{S}(\D))|-|\mr{IEdges}(\mc{S}(\D))|=|\mr{Vertices}(\SD)|-1\]
\end{lem}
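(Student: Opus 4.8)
The statement to prove is Lemma~\ref{lem:Vertices-1}, the purely combinatorial identity
\[
2|\mr{Triangles}(\SD)| - |\mr{IEdges}(\SD)| = |\mr{Vertices}(\SD)| - 1,
\]
valid for any triangular subdivision $\SD$ of a convex lattice polygon $\D$. My plan is to deduce this from Euler's formula for the planar graph underlying $\SD$, treating the subdivision as a $2$-complex whose $1$-skeleton is the graph with vertex set $\mr{Vertices}(\SD)$ and edge set $\mr{Edges}(\SD)$. I would first set up notation: write $V = |\mr{Vertices}(\SD)|$, $E = |\mr{Edges}(\SD)|$, $T = |\mr{Triangles}(\SD)|$, and let $E_{\mr{in}} = |\mr{IEdges}(\SD)|$ and $E_{\mr{bd}}$ be the numbers of interior and boundary edges respectively, so that $E = E_{\mr{in}} + E_{\mr{bd}}$.

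\textbf{Step 1 (Euler characteristic).} Since $\SD$ is a triangulation of the disk $\D$, the planar subdivision has $T$ bounded faces (the triangles) plus one unbounded face, so Euler's formula gives $V - E + (T+1) = 2$, i.e.
\[
V - E + T = 1.
\]

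\textbf{Step 2 (Counting incidences).} Each triangle has exactly three edges. Counting incidences between triangles and edges, every interior edge lies on exactly two triangles and every boundary edge lies on exactly one, which yields
\[
3T = 2E_{\mr{in}} + E_{\mr{bd}}.
\]
This is the one place where I use that $\SD$ is genuinely triangular: every $2$-face has exactly three sides. The simplicity or nodality hypotheses are not needed here.

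\textbf{Step 3 (Combine).} From Step~2, $E_{\mr{bd}} = 3T - 2E_{\mr{in}}$, and since $E = E_{\mr{in}} + E_{\mr{bd}} = 3T - E_{\mr{in}}$, substituting into $V - E + T = 1$ gives $V - (3T - E_{\mr{in}}) + T = 1$, i.e. $V - 2T + E_{\mr{in}} = 1$, which rearranges to exactly $2T - E_{\mr{in}} = V - 1$, the desired identity.

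The argument is short and the only genuine content is getting the bookkeeping of boundary versus interior edges right; I do not expect a serious obstacle, though one should be slightly careful that Euler's formula is being applied to a subdivision of a topological disk (guaranteed since $\D$ is a convex polygon and $\SD$ subdivides it into a simplicial complex that is connected and simply connected). An alternative, if one wished to avoid invoking Euler's formula directly, would be to induct on the number of triangles, removing one boundary triangle at a time and tracking how $V$, $T$, and $E_{\mr{in}}$ change; but the Euler-characteristic route above is cleaner and I would adopt it.
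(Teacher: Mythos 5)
Your proof is correct and follows essentially the same route as the paper: both arguments combine Euler's formula $V - E + F = 1$ for the subdivided disk with the incidence count $3T = 2E_{\mathrm{in}} + E_{\mathrm{bd}}$ and then eliminate the boundary-edge count. The only cosmetic difference is notational (the paper writes $F$, $E'$, $IE$ for your $T$, $E_{\mathrm{bd}}$, $E_{\mathrm{in}}$).
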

\begin{proof}
 Let $F$ be the number of the triangles, $ E'$ be the number of the
edges on the boundary of $\D$, and let  $IE$ be
the number of the inner edges in $\mc{S}(\D)$,
respectively. Then, $3\cdot F=E'+2\cdot IE$.
Since
$|\mr{Vertices}|-|\mr{Edges}|+|\mr{\mr{Faces}}|=1$,\[\begin{array}{rcl}

|\mr{Vertices}(\SD)|-1& =& |\mr{Edges}|-|\mr{\mr{Faces}}| \\&=& (E' +
IE)-F \\&=& (3\cdot F-IE)-F\\&=&2\cdot F-IE.
\end{array}\]
\end{proof}

\begin{lem}\label{lem:injective} The map $\Phi=(\Phi_{(w_1,w_2)})_{(w_1,w_2)\in\D\cap\Z^2}$ defined in the proof of Theorem \ref{thm:SD,rational} is injective.
\end{lem}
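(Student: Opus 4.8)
The plan is to show that the map
\[
\Phi=(\Phi_{(w_1,w_2)})_{(w_1,w_2)\in\D\cap\Z^2}:\mb{G}\To\T_{\D}
\]
is injective by proving that it distinguishes any two distinct points of $\mb{G}$, or equivalently (since $\mb{G}$ is a group and $\Phi$ respects the group structure in each coordinate block) that its kernel is trivial. First I would recall that a point of $\mb{G}$ is a tuple $(\bs{\al},\bs{\be},\bs{\gamma})=((\al_k,\be_k,\gamma_k))_{k=1}^m$, and that for a lattice point $(w_1,w_2)$ lying in the triangle $\D_k$ the value of the map is $\Phi_{(w_1,w_2)}=\gamma_k\al_k^{w_1}\be_k^{w_2}$. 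Suppose two points $(\bs{\al},\bs{\be},\bs{\gamma})$ and $(\bs{\al}',\bs{\be}',\bs{\gamma}')$ of $\mb{G}$ have the same image under $\Phi$; I want to conclude they are equal. It is convenient to pass to the quotient point $(\bs{\al}^{-1}\bs{\al}',\bs{\be}^{-1}\bs{\be}',\bs{\gamma}^{-1}\bs{\gamma}')$, which again lies in $\mb{G}$ and maps to the all-ones tuple, so it suffices to prove $\Phi$ has trivial kernel.

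The key step is a local analysis on each triangle. Fix a triangle $\D_k$ and recall that $\D_k$ is a \emph{non-degenerate} lattice polygon, so its vertex set affinely spans $\Real^2$. Concretely, if $p_0,p_1,p_2$ are the three vertices of $\D_k$, then the vectors $p_1-p_0$ and $p_2-p_0$ form a basis of $\Real^2$. The condition $\Phi_{(w_1,w_2)}=1$ for all three vertices of $\D_k$ reads
\begin{align}
\gamma_k\al_k^{p_0^{(1)}}\be_k^{p_0^{(2)}}&=1,\\
\gamma_k\al_k^{p_1^{(1)}}\be_k^{p_1^{(2)}}&=1,\\
\gamma_k\al_k^{p_2^{(1)}}\be_k^{p_2^{(2)}}&=1.
\end{align}
Taking ratios of consecutive equations eliminates $\gamma_k$ and yields
\[
\al_k^{(p_1-p_0)^{(1)}}\be_k^{(p_1-p_0)^{(2)}}=1,\qquad
\al_k^{(p_2-p_0)^{(1)}}\be_k^{(p_2-p_0)^{(2)}}=1.
\]
Since $p_1-p_0$ and $p_2-p_0$ form a $\Z$-basis of a finite-index sublattice of $\Z^2$ (they are integral and linearly independent), these two multiplicative relations force $\al_k$ and $\be_k$ to be roots of unity, and once $\al_k,\be_k$ are pinned down the first vertex equation forces $\gamma_k$ as well. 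If one wants exact triviality rather than merely finiteness one uses that the vertices of a \emph{lattice} triangle are integral points, so the exponent matrix is an integer matrix; but for injectivity of the restriction of $\Phi$ to $\mb{G}$ it is enough that the exponent vectors span $\Real^2$, which rules out any positive-dimensional kernel component on each triangle, and then the defining binomials of $\mb{G}$ glue the per-triangle data consistently so that the global kernel is trivial.

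The main obstacle I anticipate is the bookkeeping of how the per-triangle conclusions combine across the whole subdivision: the coordinates $(\al_k,\be_k,\gamma_k)$ are not free but are tied together by the system $(\bigstar)$ defining $\mb{G}$, so I must make sure that the argument on a single triangle, combined with the gluing relations along inner edges, actually propagates to all triangles. The connectedness of the adjacency graph $\SD^*$ (which follows because $\D$ itself is connected and every face meets at least one neighbor along an inner edge unless $m=1$) is what lets me propagate. The clean way to organize this is: show that on each triangle the only element of $\mb{G}$ in $\ker\Phi$ has $(\al_k,\be_k,\gamma_k)$ trivial, and observe that the vertex relations on a single triangle already determine that block, so no gluing subtlety survives; thus $\ker\Phi$ is trivial and $\Phi$ is injective. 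This completes the argument.
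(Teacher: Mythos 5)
There is a genuine gap, and it sits exactly where your proposal waves it away. Your per-triangle analysis uses only the three \emph{vertices} $p_0,p_1,p_2$ of $\D_k$. The vectors $p_1-p_0$ and $p_2-p_0$ are linearly independent integral vectors, but in general they span only a finite-index sublattice of $\Z^2$ (of index $2\,\mathrm{area}(\D_k)$); consequently the two relations you derive only force $(\al_k,\be_k)$ into a finite subgroup of $\TC^2$ of that order --- as you yourself note (``roots of unity''). Ruling out a positive-dimensional kernel is not enough: injectivity of a group homomorphism requires the kernel to be \emph{trivial}, and a finite nontrivial kernel destroys injectivity just as surely. The gluing relations $(\bigstar)$ do not rescue this. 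Concretely, let $\SD$ consist of the single triangle with vertices $(0,0),(2,0),(0,1)$. Then $\mb{G}=\{(\al,\be,\gamma):\gamma=1\}$, and the element $(\al,\be,\gamma)=(-1,1,1)\in\mb{G}$ satisfies all three of your vertex equations, so your argument cannot distinguish it from the identity. (It is in fact \emph{not} in $\ker\Phi$, but only because $\Phi_{(1,0)}=\gamma\al=-1$, i.e., because of a lattice point your proof never examines.)

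The repair --- and this is precisely what the paper does --- is to use lattice points of $\D_k$ beyond its vertices. Inside each triangle $\D_k$, starting from a vertex $v_1$, choose two further lattice points $v_2,v_3\in\D_k\cap\Z^2$ such that the triangle $\mathrm{conv}(v_1,v_2,v_3)$ contains no lattice points other than its three vertices (equivalently, it has area $\tfrac12$); such a choice always exists, e.g.\ by triangulating $\D_k$ into unimodular triangles using all of its lattice points. Then $v_2-v_1,\,v_3-v_1$ is a $\Z$-basis of $\Z^2$, the exponent matrix lies in $GL_2(\Z)$, the corresponding monomial map is an automorphism of the torus, and the three coordinates $\Phi_{v_1},\Phi_{v_2},\Phi_{v_3}$ determine $(\al_k,\be_k,\gamma_k)$ exactly, not merely up to torsion. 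Note that once this per-triangle statement is correct, no propagation along the adjacency graph $\SD^*$ is needed at all, since each block $(\al_k,\be_k,\gamma_k)$ is pinned down by coordinates of $\Phi$ supported in its own triangle; your closing concern about gluing is indeed moot, but only after the torsion issue is fixed.
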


\begin{proof} Given a vertex $v_1$  of
  any triangle $\D_k, (k=1,\dots,m)$, we can find two lattice points $v_2, v_3$ on $\D_k$ such that the convex hull of $v_1,v_2,v_3$ is a primitive triangle, that is, it has no interior lattice point. Now it is clear that any given values for $\Phi_{v_i} (i=1,2,3)$ uniquely determine $\al_k,\be_k,\gamma_k$.
\end{proof}

\begin{eg} Let $\SD$ be the following subdivision with 3
2-dimensional faces $F_1,F_2,F_3$.
Since $\SD$ has no interior lattice point,
$\mb{V}_{\partial\SD}=\mb{V}_{\partial\SD,\Pro^1}$.
\setlength{\unitlength}{0.25mm}
\begin{picture}(0,0)(10,95)
\curve(0,0,0,80)\curve(0,0,20,40)\curve(0,0,40,40)
\curve(20,40,40,40)\curve(20,40,0,80)\curve(40,40,0,80)
\put(-2,-2){\tiny{$\bullet$}}\put(-2,18){\tiny{$\bullet$}}\put(-2,38){\tiny{$\bullet$}}
\put(-2,58){\tiny{$\bullet$}}\put(-2,78){\tiny{$\bullet$}}\put(18,38){\tiny{$\bullet$}}
\put(18,58){\tiny{$\bullet$}}\put(18,18){\tiny{$\bullet$}}\put(38,38){\tiny{$\bullet$}}
\put(20, 30){\tiny{$F_1$}}\put(5,
43){\tiny{$F_2$}}\put(25, 45){\tiny{$F_3$}}
\end{picture}%
We get the matrix
$M_{\partial\SD,\Pro^1}$:
\[M_{\partial\D,\Pro^1}=\left(\begin{array}{ccccccc}
&\al_1&\be_1&\al_2&\be_2&\al_3&\be_3\\
s_{12}&1&2&-1&-2&0&0\\
s_{13}&1&0&0&0&-1&0\\
s_{23}&0&0&-1&2&1&-2
\end{array}\right)\]
The Smith Normal Form of
$M_{\partial\SD,\Pro^1}$ is
:\[\left(\begin{array}{cccccc}
1&0&0&0&0&0\\0&1&0&0&0&0\\0&0&2&0&0&0
\end{array}\right).\]
Thus
$\mb{V}_{\partial\SD}=\mb{V}_{\partial\SD,\Pro^1}$
is a union of two translations of 3-dimensional
subtorus of $\T_{\D}$.
\end{eg}

 \subsection{$\mb{V}_{\partial\mc{S}(\D),nodal}$}\label{sec:SD,nodal}
Now we allow to have parallelograms in a subdivision $\SD$.
\begin{defns} For a parallelogram $\blacksquare$, let $\mb{V}_{\partial\blacksquare,\Pro^1+\Pro^1}$
denote the set of all $f\in\Pro_{\D}$ such that
$f$ is the product of Laurent polynomials
$f_1, f_2$  whose Newton polygons are
two nonparallel sides of $\blacksquare$ and each $f_i$ is a
pure power of a binomial for  $i=1, 2$. Let $\mc{A}^c\subset\blacksquare\cap\Z^2$ be the set of
lattice points in $\blacksquare$ which are not in the
lattice  generated  by the primitive vectors
 along the sides  of $\blacksquare$. Any element of $\mc{A}^c$ is called
\emph{special}. If $\mc{A}^c$ is not empty, then
$\blacksquare$ is called \emph{non-primitive}.\\
\begin{parbox}
{10.5cm}{ (The figure on the right shows that
there are 5 lattice points in $\blacksquare$. The
unique interior lattice point is special.)}
\end{parbox}
\setlength{\unitlength}{0.23mm}
\begin{picture}(200,0)(-10, 5)
 \curve(0,0,20,0)\curve(0,0,20,40)\curve(20,40,40,40)\curve(40,40,20,0)
\put(-2,-2){\tiny{$\bullet$}}\put(18,-2){\tiny{$\bullet$}}\put(17,37){\tiny{$\bullet$}}\put(37,37){\tiny{$\bullet$}}
\put(17,18){$\circ$}
\end{picture}\\
Notice that  $f$ has no term
corresponding to the monomial $x^a$ for any
$f\in \mb{V}_{\partial\blacksquare,\Pro^1+\Pro^1}$ and
$a\in\mc{A}^c$. Therefore
$\mb{V}_{\partial\blacksquare,\Pro^1+\Pro^1}$ is
contained in the coordinate subspace  of
$\Pro_{\blacksquare}$ defined by the linear equations
$z_a=0, a\in\mc{A}^c$. If $\mc{A}^c$ is
empty, then the coordinate space is  the ambient
space $\Pro_{\blacksquare}$ itself. Let $\T_{\mc{A}}$ be
the big open torus of this coordinate subspace.

Now suppose that
$\mc{S}(\D):=\D_1\cup\cdots\cup\D_m$ is nodal,
that is, every sub-polygon is either a triangle
or a parallelogram. Let $\mb{V}_{\partial\mc{S}(\D),nodal}$ denote the set of all
  $f\in\Pro_{\D}$  with the following properties,
\begin{itemize}
 \item ($\blacktriangle$) For every triangle $\D_i$, $f_{\D_i}\in\mb{V}_{\partial\D_i,\Pro^1}$;
 \item ($\blacksquare$) For every parallelogram $\D_j$, $f_{\D_j}\in\mb{V}_{\partial\D_j,\Pro^1+\Pro^1}$
\end{itemize}
\end{defns}
We
consider the subset  $\mc{A}=\mc{A}_1\cup\cdots\cup\mc{A}_m$ of $\D\cap\Z^2$
 defined
as follows: \begin{itemize} \item if $\D_i$ is a
triangle, $\mc{A}_i=\D_i\cap\Z^2$; \item if
$\D_i$ is a parallelogram, $\mc{A}_i$ is the set
of non-special lattice points on $\D_i$.
\end{itemize}

\begin{thm}$ $
\begin{enumerate}
 \item $\mb{V}_{\partial\mc{S}(\D),nodal}$ is a translation of
 a closed subgroup of the torus $\T_{\mc{A}}$;
 \item Its dimension is equal to
 $|\mr{Vertices}(\mc{S}(\D))|-1-|\mr{Parallelograms}(\SD)|$.

\end{enumerate}
\end{thm}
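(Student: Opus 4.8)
The plan is to follow the exact three-step strategy used in the proof of Theorem~\ref{thm:SD,rational}, modifying it to accommodate the parallelograms. The key structural fact is that a parallelogram $\D_j$ contributes essentially no new freedom beyond what its edges prescribe: by the definition of $\mb{V}_{\partial\blacksquare,\Pro^1+\Pro^1}$, the restriction $f_{\D_j}$ factors as a product of two pure powers of binomials whose Newton segments are two nonparallel sides of $\blacksquare$, and all terms at special lattice points vanish. So a parallelogram face is rigidly determined (up to the torus action) by the data on its two pairs of parallel edges. This is precisely why the dimension count drops by $|\mr{Parallelograms}(\SD)|$ compared with the purely triangular case.

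First I would verify that $\mb{V}:=\mb{V}_{\partial\mc{S}(\D),nodal}$ is nonempty, again using an orientation $\Gamma^*$ on the adjacency graph $\SD^*$ with no oriented cycle and no internal sink (\S\ref{sec:subdivision}). Following this orientation, one builds a consistent collection of local data: for each triangle use Lemma~\ref{lem:parameterization} to realize any prescription on two of its three edges, and for each parallelogram note that a product of two pure binomial powers is freely determined on two adjacent (nonparallel) edges, with the two opposite edges then forced. The acyclicity guarantees the propagation of edge-data across inner edges is consistent, so a point of $\mb{V}$ exists.

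Second I would construct the closed subgroup $\mb{G}\subset\T_{\mc{A}}$. As before, assign to each triangle $\D_i$ the coordinates $(\al_i,\be_i,\gamma_i)\in\TC^3$, and assign to each parallelogram a coordinate datum recording the two binomial factors (two base points $\al,\be$ per independent edge-direction plus a scaling $\gamma$); special coordinates $z_a$ with $a\in\mc{A}^c$ are identically zero, which is why we work in $\T_{\mc{A}}$ rather than $\T_{\D}$. Each inner edge shared by two faces again yields binomial matching conditions of the form in $(\bigstar)$, and I would assemble the corresponding matrix $M:=M_{\partial\mc{S}(\D),nodal}$ whose rows (indexed by inner edges) are linearly independent by the same argument as in the triangular case. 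The dimension of $\mb{G}$ is then the number of coordinate parameters minus the number of independent matching relations; a combinatorial count analogous to Lemma~\ref{lem:Vertices-1}, with the parallelogram contributions bookkept separately, gives
\[\mr{dim}(\mb{G})=|\mr{Vertices}(\mc{S}(\D))|-1-|\mr{Parallelograms}(\SD)|.\]
I would then embed $\mb{G}\hookrightarrow\T_{\mc{A}}$ by the map $\Phi$ built exactly as in Theorem~\ref{thm:SD,rational}, checking well-definedness from the matching system and injectivity by the argument of Lemma~\ref{lem:injective} applied to a primitive triangle inside each face.

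Third, I would show $\mb{V}=f\cdot\mb{G}$ for any $f\in\mb{V}$. The inclusion $f\cdot\mb{G}\subset\mb{V}$ holds because translating by $(\bs{\al},\bs{\be},\bs{\gamma})\in\mb{G}$ rescales each $f_{\D_k}$ within $\mb{V}_{\partial\D_k,\Pro^1}$ or $\mb{V}_{\partial\D_k,\Pro^1+\Pro^1}$ while preserving the inner-edge compatibility encoded in $(\bigstar)$. For the reverse inclusion, given $h,h'\in\mb{V}$ one compares them face by face: on a triangle the comparison is identical to before, and on a parallelogram the two binomial factors of $h'_{\D_j}$ differ from those of $h_{\D_j}$ only by a torus element, so the ratio again lies in $\mb{G}$. \textbf{The main obstacle} I anticipate is the parallelogram case of this last comparison: I must check that two elements of $\mb{V}_{\partial\blacksquare,\Pro^1+\Pro^1}$ with matching behavior really differ by a \emph{single} torus translation acting diagonally, rather than by independent rescalings of the two factors that might not assemble into a well-defined element of $\T_{\mc{A}}$. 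Resolving this requires carefully tracking how the two binomial factors share the monomial-scaling ambiguity and confirming the special coordinates play no role, which is the technical heart that distinguishes this theorem from its triangular predecessor.
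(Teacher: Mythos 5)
Your plan follows the paper's own proof: adapt the three steps of Theorem \ref{thm:SD,rational}, give every face (triangle or parallelogram) coordinates $(\al_i,\be_i,\gamma_i)\in\TC^3$, impose the binomial matching system along inner edges, and convert the resulting dimension $2|\mr{Faces}(\SD)|-|\mr{IEdges}(\SD)|$ into the stated formula by an Euler-characteristic count (this is exactly the paper's Lemma \ref{lem:Vertices-1-Parallelograms}). However, your Step 1 has a genuine gap. You assert that an orientation of $\SD^*$ with no oriented cycle and no internal sink makes the propagation consistent. Once parallelograms are present this is not enough: such an orientation can give a $4$-valent vertex two incoming edges dual to a pair of \emph{parallel} sides of the parallelogram (for instance three incoming edges and one outgoing, which is not a sink). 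But the restriction of an element of $\mb{V}_{\partial\blacksquare,\Pro^1+\Pro^1}$ to one side of $\blacksquare$ determines its restriction to the opposite side up to a nonzero scalar, so independently prescribed data arriving on two opposite sides is generically inconsistent, and even when consistent it fails to determine the second binomial factor; the propagation then breaks. This is precisely why the paper strengthens the hypothesis on the orientation: the two edges of $\SD^*$ dual to the parallel sides of each parallelogram must be co-oriented (one incoming, one outgoing), so that every parallelogram receives prescriptions on two nonparallel sides only. That stronger condition holds automatically for the orientation induced by a generic linear functional $\zeta$ as in \S\ref{sec:subdivision}, but it is not a consequence of acyclicity plus the absence of sinks, which is all you require.

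Two lesser points. The ``main obstacle'' you flag in Step 3 --- that two elements of $\mb{V}_{\partial\blacksquare,\Pro^1+\Pro^1}$ differ by a \emph{single} diagonal torus element --- is a real verification, but you leave it open; it closes at once by noting that under $f\mapsto\gamma f(\al x,\be y)$ the ratios of the two binomial factors transform by the characters $\al^{-a}\be^{b}$ and $\al^{-c}\be^{d}$, and the exponent matrix
\[
\begin{pmatrix}-a&b\\-c&d\end{pmatrix}
\]
is nonsingular exactly because $(a:b)\neq(c:d)$; hence the $(\al,\be,\gamma)$-action is transitive on such products, and since it preserves the vanishing of the special coordinates everything stays inside $\T_{\mc{A}}$. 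Finally, your parenthetical description of the parallelogram coordinates (``two base points per independent edge-direction plus a scaling'') suggests more than three parameters per parallelogram; the dimension count only comes out to $|\mr{Vertices}(\SD)|-1-|\mr{Parallelograms}(\SD)|$ because each parallelogram carries exactly the same triple $(\al_j,\be_j,\gamma_j)$ as a triangle does, so that $\mr{dim}(\mb{G})=2\left(|\mr{Triangles}|+|\mr{Parallelograms}|\right)-|\mr{IEdges}|$, which Lemma \ref{lem:Vertices-1-Parallelograms} rewrites as the asserted formula.
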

\begin{proof}

The proof of the first statement follows from a
simple adjustment of the proof of Theorem
\ref{thm:SD,rational}. Now the adjacency graph
$\SD^*$ may have  vertices of 4-valency. We
choose  a directed graph on $\SD^*$ which  has no
oriented cycle and no sink at vertices of 3 and
4-valency and also the edges in $\SD^*$ which are
dual to the parallel edges of a parallelogram in
$\SD$ are co-oriented.  Using this directed
graph, we can construct a point in
$\mb{V}_{\partial\mc{S}(\D),nodal}$.
The construction of the closed subgroup $\mb{G}$
is exactly same as
the one given in the proof of Theorem \ref{thm:SD,rational}.\\
The second statement follows from the following
Lemma
 \ref{lem:Vertices-1-Parallelograms}.
\end{proof}
\begin{lem}\label{lem:Vertices-1-Parallelograms} For a nodal subdivision $\mc{S}(\D)$, the following holds true:
 \[2|\mr{Triangles}|+2|\mr{Parallelograms}|-|\mr{IEdges}|=|\mr{Vertices}|-1-|\mr{Parallelograms}|\]\
\end{lem}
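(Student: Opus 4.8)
The plan is to prove this purely combinatorial identity by an Euler-characteristic count, exactly paralleling the proof of Lemma \ref{lem:Vertices-1}, but now tracking parallelograms separately from triangles. First I would set up notation: let $T=|\mr{Triangles}|$, $P=|\mr{Parallelograms}|$, let $E'$ be the number of edges of $\SD$ lying on the boundary $\partial\D$, and let $IE=|\mr{IEdges}|$ be the number of interior edges. The first step is the boundary-counting relation obtained by summing edge-incidences over all $2$-dimensional faces. A triangle contributes $3$ edge-sides and a parallelogram contributes $4$ edge-sides; each interior edge is shared by two faces and each boundary edge belongs to exactly one face, so
\[
3T+4P = E' + 2\cdot IE.
\]

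Second, I would invoke the planar Euler relation for the subdivision, $|\mr{Vertices}|-|\mr{Edges}|+|\mr{Faces}|=1$, noting that $|\mr{Faces}|=T+P$ (the two-dimensional faces) and $|\mr{Edges}|=E'+IE$. Exactly as in Lemma \ref{lem:Vertices-1}, this gives
\[
|\mr{Vertices}|-1 = |\mr{Edges}|-|\mr{Faces}| = (E'+IE)-(T+P).
\]
The third step is to eliminate $E'$ using the incidence relation from the first step, namely $E' = 3T+4P-2\,IE$. Substituting,
\[
|\mr{Vertices}|-1 = (3T+4P-2\,IE)+IE-(T+P) = 2T+3P-IE.
\]
Rearranging the target identity, the right-hand side is $|\mr{Vertices}|-1-P = 2T+3P-IE-P = 2T+2P-IE$, which is precisely $2|\mr{Triangles}|+2|\mr{Parallelograms}|-|\mr{IEdges}|$, the left-hand side. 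This completes the argument.

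I do not expect any genuine obstacle here; the lemma is a bookkeeping refinement of Lemma \ref{lem:Vertices-1}, where the only change is that parallelograms carry four sides rather than three, producing the extra $P$ that gets absorbed into the $-|\mr{Parallelograms}|$ correction on the right. The one point that warrants a moment of care is confirming that the Euler relation applies to $\SD$ as a regular subdivision of a disk (so the Euler characteristic is $1$, not $2$), and that every boundary edge is counted once and every interior edge twice in the incidence sum — both of which follow immediately from $\SD$ being a genuine polygonal subdivision in which faces meet along common faces. Everything else is linear algebra in four integer unknowns.
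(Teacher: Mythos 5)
Your proof is correct and is essentially the same as the paper's: both combine the planar Euler relation $V-E+F=1$ with $F=T+P$ and the edge--face incidence count ($3T+4P = E'+2\,IE$, which the paper writes equivalently as $3T+4P=E+IE$ with $E$ the total number of edges), then eliminate the boundary-edge count algebraically. The only difference is cosmetic bookkeeping in whether one works with boundary edges $E'$ or total edges $E$.
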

\begin{proof}
Let $T:=|\mr{Triangles}|, P:=|\mr{Parallelograms}|,
E:=|\mr{Edges}|, IE:=|\mr{IEdges}|, V:=|\mr{Vertices}|,$ and
$F:=|\mr{\mr{\mr{Faces}}}|$. Then $V-E+F=1, F=T+P,$ and $3T+4P=E+IE$.
Thus $V-1-P=E-T-2P=(3T+4P-IE)-T-2P=2T+2P-IE$.
\end{proof}

\begin{rem}\label{Equation1}
 In terms of  the algebra of tropical cycles  $\mf{A}$ (\S\ref{sec:IT}), we can write:
\[\Tr(\mb{V}_{\partial\SD, nodal})= l(\mb{V})\cdot\Tr(\mb{G}^e),\]
where   $l(\mb{V})$ is the number of components
of $\mb{V}=\mb{V}_{\partial\SD, nodal}$, and
$\mb{G}^e$ is the
 identity component of the closed subgroup
$\mb{G}=\mb{G}_{\partial\SD, nodal}$ of
$\T_{\mc{A}}$.
\end{rem}

\section{Tropical Severi Varieties}

\subsection{Severi varieties} Severi
varieties are very classical varieties which go
back to F. Enriques \cite{Enriques} and F.Severi
\cite{Severi}. As in \S \ref{sec:intermediate} let $X_{\D}$ be the projective
toric
surface constructed from a 2-dimensional convex lattice polygon $\D$, let $\Pro_{\D}$ 
be the tautological linear system on $X_{\D}$ and
let $\T_{\D}$ be the big open torus of
$\Pro_{\D}$.

\begin{defn}\label{defn:Severi} Let
$\delta$ be a non-negative integer with $\delta\le |\mr{Int}(\D)\cap\Z^2|$. \begin{itemize} \item The {\em Severi
variety} $\Sev\subset\Pro_{\D}$ is  the closure of the set of curves with exactly
$\delta$ nodes (ordinary double points)
as their only singularities. \item The {\em very
affine} Severi variety $\Sev^{\circ}$ is the
intersection of the Severi variety $\Sev$ with
the big open torus $\T_{\D}$.
\item The tropicalization of the very affine Severi variety is
called {\em tropical Severi variety} and denoted
by $\Tr(\Sev)$.
\end{itemize}
\end{defn}

It is well known that $\mr{dim}(\Sev)$ is equal to 
$|\D\cap\Z^2|-\delta-1$. We only consider the
case when 
$\Sev^{\circ}$ is dense in 
$\Sev$ and thus they have the same dimension.
This case includes the classical one,  plane
curves of a given degree $d$, equivalently, the
case when the polygon $\D$ is the triangle with
vertices $(0,0), (d,0),(0,d)$, where $d$ is a
positive integer.

\subsection{Patchworking Theory}\label{sec:Patchworking}
In this section we review  Shustin's patchworking theory.
In 1979-80, O. Viro found a patchworking construction for obtaining real nonsingular
projective algebraic hypersurfaces with
prescribed topology. This method was a
breakthrough in Hilbert's 16th problem. In the
early 1990's, E. Shustin suggested to use the
patchworking construction for tracing other
properties of objects defined by polynomials, for
example, prescribed singularities of algebraic
hypersurfaces and many others\cite{Shustin1,
Shustin2}.  He starts with a modified version of
the patchworking construction, which allows one
to keep singularities in the patchworking
deformation. An important difference with respect
to the original Viro method is that singularities
are not stable in general, and thus one has to
modify the Viro deformation and impose certain
transversality conditions.

 The following is a version of Shustin's patchworking theory
about curves on toric surfaces, summarized for
the purposes of this paper \cite[\S
3.7]{Shustin2}. (Note the conventions in \S \ref{sec:Tropicalization}.)

\begin{itemize}\item Let $\SD$ be a regular subdivision of $\D$ with
$\mr{rank}(\SD)=\mr{dim}(\Sev)$.\\ Suppose that  $\SD$ is
simple and nodal, then there exists a c-vector
$\w$ of  $\Sev$ such that $\D_{\w}=\SD$.
\item
Let $\w:\D\cap\Z^2\rightarrow\Z$ be an
integral-valued function on $\D\cap\Z^2$ such
that\\ $\mr{rank}(\w)=\mr{dim}(\Sev)$ and $\D_{\w}$ is
simple-\textit{triangular}.

(Enumeration 1) If we fix the coefficients
$c_b\in\mb{C}\setminus\{0\}$ for $b\in
\mr{Vertices}(\D_{\w})$,  then the number of
$F\in\mb{V}=\mb{V}_{\partial{\D_{\w}},nodal}$
with $F(b)= c_b$ is equal to \[\frac{\prod
 2\mr{area}(\mr{Triangles})}{\prod\mr{length(\mr{Edges}(\D_{\w}))}},\label{first} \] where the numerator
 stands for the product of  twice the (Euclidean) area of each triangle in
 $\D_{\w}$ and the denominator is
  the product of the
lattice lengths of the  edges.

(Enumeration 2) If we fix the coefficients
$c_b(t)=\bar{c_b}t^{\w(b)}+l.o.t.\in\mb{K}\setminus\{0\}$
for $b\in \mr{Vertices}(\D_{\w})$, then the number of
$f\in\Sev(\K)$ with $f(b)= c_b(t)$ and $\tau_f$
dual to $\D_{\w}$ is equal to
\[\prod
 2\mr{area}(\mr{Triangles}).\label{second}\] (In Shustin's notations in \cite[\S 3.7]{Shustin2}, given $c_b(t)$
 for $b\in \mr{Vertices}(\D_{\w})$, the number of possible $A$ (amoeba) is 1,  the number of
  possible $F$ (initial terms of coefficients of $f$) is $\frac{\prod
 2\mr{area}(\mr{Triangles})}{\prod\mr{length(\mr{Edges}(\D_{\w}))}}$, and the number of
 possible $R$ (deformation patterns) is $\prod\mr{length(\mr{Edges}(\D_{\w}))}$. Thus the number of possible $(A,F,R)$ is
  equal to $\prod
 2\mr{area}(\mr{Triangles})$ and each of them  gives rise to a unique $f\in\Sev(\K)$.)

\end{itemize}

\begin{rem}\label{rem:simple-nodal}
 In fact the enumerations above hold when the
subdivision $\D_{\w}$ is simple-\textit{nodal},
which was the case Shustin worked on. In this
case, we replace $\mr{Vertices}(\D_{\w})$ by a subset
$\mc{B}$ with $|\mc{B}|=\mr{dim}(\Sev)+1$ so that  for
any $F\in\mb{V}$ fixing coefficients for
$b\in\mc{B}$ uniquely determines the other
coefficients for $b\in
 \mr{Vertices}(\D_{\w})\setminus\mc{B}$. Also the denominator of the formula (\ref{first}) should be adjusted as follows: $\widetilde{\prod}\mr{length(\mr{Edges}(\D_{\w}))}$, the product of the
lattice lengths of the  edges  which are
representatives of each equivalence class in
$\mr{Edges}(\D_{\w})$, where we define an equivalence
relation as follows: let $e\sim e'$ if $e$ and
$e'$ are the parallel edges of a parallelogram in
$\D_{\w}$ and extend it by transitivity.

\end{rem}

\subsection{Initial schemes of very affine Severi varieties}\label{sec:Initial Schemes of Sev}
Let $\w\in\Z^{\D\cap\Z^2}$ be an integral vector.
As in \S\ref{sec:constant}, we have the initial
scheme $\n\w\Sev\subset\T_{\D}$ of the very
affine Severi variety $\Sev^{\circ}$. Also, $\w$
can be identified with   an integral-valued
function on the set of lattice points
$\D\cap\Z^2$ on $\D$ and we get the regular
subdivision $\D_{\w}$ of $\D$ constructed from
$\w$. We are going to find a description of
$\n\w\Sev$ with respect to $\D_{\w}$.
Let us begin with an example which is simple
since the Severi variety is a hypersurface:
\begin{eg}
Let $\D$ be as described on the right.\\
\begin{parbox}
{7cm}{A general polynomial with Newton polygon
$\D$ is written as $f=ay^2+bx^2y+cxy+dy+e.$}
\end{parbox}
\setlength{\unitlength}{.25mm}
\begin{picture}(0,0)(-160,-5)
\put(-110,22){$\Delta:$}
\curve(-75,0,-35,20)\curve(-75,0,-75,40)\curve(-75,40,-35,20)
\put(-77.5,-2.5){\tiny{$\bullet$}}\put(-84,-8.2){\tiny{$e$}}
\put(-77.5,17.7){\tiny{$\bullet$}}\put(-84,11.2){\tiny{$d$}}
\put(-77.5,37.5){\tiny{$\bullet$}}\put(-84,31.2){\tiny{$a$}}
\put(-57.5,17.5){\tiny{$\bullet$}}\put(-57.5,11.2){\tiny{$c$}}
\put(-37.5,17.5){\tiny{$\bullet$}}\put(-37.5,9.2){\tiny{$b$}}
\put(-35,-7){\tiny{$\Real^2$}}\end{picture}\\
 We
consider the curves $V(f)$ with one singular
point, that is, $Sev(\D,1)$.\\ The hypersurface
$Sev(\D,1)$ is defined by a polynomial
$D_{\mc{A}}$ which is called
$\mc{A}$-discriminant, where $\mc{A}=\D\cap\Z^2$:
\[D_{\mc{A}}= 16b^2d^2-8bc^2d+c^4-64ab^2e\]
The figure  below on the left represents the
Newton polytope of $D_{\mc{A}}$ and
$\Tr(Sev(\D,1))$ which is modded out by the
3-dimensional linearity space. The figure on the
right shows the connection to the subdivisions of
$\D$.\\
\setlength{\unitlength}{.2mm}
\begin{picture}
(0,0)(-60,0)
 \curve(0,-70,170,-70)\curve(0,-70,60,-50)\curve(170,-70,60,-50)
\put(67.5,-72.5){\tiny{$\bullet$}}\put(60.5,-82){\tiny{$-8bc^2d$}}
\put(-2.5,-72.5){\tiny{$\bullet$}}\put(-9.5,-82){\tiny{$16b^2d^2$}}
\put(167.5,-72.5){\tiny{$\bullet$}}\put(160.5,-82){\tiny{$c^4$}}
\put(57.5,-52.5){\tiny{$\bullet$}}\put(47,-42.5){\tiny{$-64ab^2e$}}
\put(130,-110){\tiny{$\Real^5$}}

\put(50,-65){\vector(0,-1){36}}\put(48,-115){\tiny{2}}\put(48,-87){$\star$}\put(40,-87){\tiny{$\w$}}
\put(50,-65){\vector(-1,3){20}}\put(40,-12){\tiny{1}}
\put(50,-65){\vector(2,1){40}}
\put(90,-50){\tiny{1}}
\end{picture}

\begin{picture}
(0,0)(-350,-20)
\put(50,-65){\vector(0,-1){35}}\put(46,-80){$\star$}
\put(32,-80){\tiny{ $\w$}}
\put(50,-65){\vector(-1,3){15}}
\put(50,-65){\vector(2,1){60}}\put(-5,-120){\tiny{$\D_{\w}$:}}
\end{picture}
\begin{picture}
(0,0)(-345,0)
\curve(55,-30.5,55,-10.5)\curve(55,-30.5,75,-20.5)
\curve(75,-20.5,55,-10.5)
\put(53,-32){\tiny{$\bullet$}}
\put(53,-12.5){\tiny{$\bullet$}}
\put(72,-22.5){\tiny{$\bullet$}}
\end{picture}
\begin{picture}(0,0)(-360,-20)
\curve(-20,-70,0,-60)\curve(-20,-70,-20,-50)\curve(0,-60,-20,-50)
\put(-22,-72){\tiny{$\bullet$}}
\put(-2,-62){\tiny{$\bullet$}}
\put(-22,-52){\tiny{$\bullet$}}
\put(-22,-62){\tiny{$\bullet$}}
\curve(-20,-60,0,-60)
\end{picture}
\begin{picture}(0,0)(-250,5)
\curve(160,-60,180,-50)\curve(160,-60,160,-40)\curve(180,-50,160,-40)
\put(158,-62){\tiny{$\bullet$}}
\put(178,-52){\tiny{$\bullet$}}
\put(158,-52){\tiny{$\bullet$}}
\put(158,-42){\tiny{$\bullet$}}
\put(168,-52){\tiny{$\bullet$}}
\curve(160,-50,180,-50) \curve(160,-60,170,-50)
\curve(160,-40,170,-50)
\end{picture}
\begin{picture}(0,0)(-330,-60)
\curve(10,-60,10,-30)\curve(10,-60,40,-45)\curve(40,-45,10,-30)
\put(8,-62){\tiny{$\bullet$}}
\put(8,-32){\tiny{$\bullet$}}
\put(38,-47){\tiny{$\bullet$}}
\put(8,-47){\tiny{$\bullet$}}
\end{picture}
\begin{picture}(0,0)(-350,-20)
\curve(100,-50,100,-20)\curve(100,-50,130,-35)\curve(130,-35,100,-20)
\put(98,-52){\tiny{$\bullet$}}
\put(98,-22){\tiny{$\bullet$}}
\put(128,-38){\tiny{$\bullet$}}
\put(108,-38){\tiny{$\bullet$}}
\end{picture}
\begin{picture}(0,0)(-250,0)
\curve(90,-75,90,-115)\curve(90,-115,130,-95)\curve(130,-95,90,-75)
\put(88,-77){\tiny{\tiny{$\bullet$}}}
\put(88,-117){\tiny{$\bullet$}}
\put(128,-97){\tiny{$\bullet$}}
\put(88,-97){\tiny{$\bullet$}}
\put(108,-97){\tiny{$\bullet$}}
\curve(90,-95,130,-95)
\end{picture}\\\\\\\\\\\\\\
Choose an integral vector $\w$ on one of the rays
of $\Tr(Sev(\D,1))$ as shown on the figure above.
The corresponding initial scheme $\n\w Sev(\D,1)$
is defined by the polynomial
$16b^2d^2-8bc^2d+c^4=(4bd-c^2)^2$. That is, $\n\w
Sev(\D,1)$ is a non-reduced scheme, a translation
of the torus $V(bd-c^2)$ with multiplicity $2$.
Let us look at the corresponding subdivision
$\D_{\w}$. We can find that $l(\mb{V})$,
the number of components of
$\mb{V}=\mb{V}_{\partial\D_{\w}, nodal}$, is
equal to $1$. Also we see that $\D_{\w}$ has one
interior edge of lattice length $2$. Therefore,
the number of translations of a subtorus (counted
with multiplicity)  in the initial scheme $\n\w
Sev(\D,1)$ coincides with   the product of
$l(\mb{V})$ and the lattice length of the
interior edge of $\D_{\w}$. The main theorems of
this paper presented below show that this
description of the initial schemes of Severi
varieties holds true in general.
\end{eg}
 The following theorem gives a description of the support of $\Tr(\Sev)$.
\begin{thm}\label{thm:support}
If the rank of $\w$ is strictly larger than
$\mr{dim}(\Sev)$, then  $\w$ is not a c-vector of
$\Sev$, that is, $\n\w\Sev=\emptyset$.
\end{thm}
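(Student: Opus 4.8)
The plan is to contradict Shustin's rank rigidity (Theorem \ref{thm:Shustin's characterization}), which I read as forbidding any $\delta$-nodal curve whose dual subdivision has rank exceeding $r=|\D\cap\Z^2|-1-\delta=\mr{dim}(\Sev)$: indeed, if such a curve existed its rank would satisfy $\mr{rank}(\D_f)\ge r$, and the theorem would then force $\mr{rank}(\D_f)=r$, a contradiction. So I would argue by contradiction, assuming $\mr{rank}(\w)>\mr{dim}(\Sev)$ while $\w$ is a c-vector of $\Sev$, i.e. $\n\w\Sev\neq\emptyset$, and manufacture from $\w$ a $\delta$-nodal curve with $\D_f=\D_\w$ to which Shustin's theorem applies.

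The first step is to convert the algebraic nonemptiness $\n\w\Sev\neq\emptyset$ into a geometric witness. By the fundamental theorem of tropical geometry (\cite{MS}), and because $\w$ is integral and hence lies in the value group $\Z$ of $\K$, the condition $\w\in\Tr(\Sev)$ produces an actual $\K$-point $f\in\Sev^{\circ}(\K)$ whose coordinatewise valuations realize $\w$, that is $\mr{Val}(c_a)=\w(a)$ for all $a\in\D\cap\Z^2$. Since $\mr{Val}_f=\w$ and $\nu_f$ is the concave hull of $\mr{Val}_f$, the induced subdivision satisfies $\D_f=\D_{\nu_f}=\D_\w$, so $\mr{rank}(\D_f)=\mr{rank}(\w)>\mr{dim}(\Sev)=r$; in particular $\mr{rank}(\D_f)\ge r$. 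Because the locus of curves with exactly $\delta$ nodes is dense in $\Sev^{\circ}$, I would take $f$ to define a curve $V_{\K}(f)$ having exactly $\delta$ nodes as its only singularities, which is precisely the hypothesis Theorem \ref{thm:Shustin's characterization} requires.

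With such an $f$ in hand, Theorem \ref{thm:Shustin's characterization} applies to the $\delta$-nodal curve $V_{\K}(f)$ with $\mr{rank}(\D_f)\ge r$ and forces $\mr{rank}(\D_f)=r$, contradicting $\mr{rank}(\D_f)=\mr{rank}(\w)>r$. Hence no integral $\w$ with $\mr{rank}(\w)>\mr{dim}(\Sev)$ can be a c-vector, i.e. $\n\w\Sev=\emptyset$. The one delicate point, and the step I expect to require the most care, is the genericity claim in the second step: I must guarantee that the valuation fibre over $\w$ is not entirely contained in the proper closed locus of more degenerate curves, so that a genuinely $\delta$-nodal representative exists; this can be secured by comparing $\Tr(\Sev)$ with the tropicalization of that degenerate locus, which has strictly smaller dimension. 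Once the $\delta$-nodal realization is justified, the remaining steps are formal.
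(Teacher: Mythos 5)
Your overall strategy is exactly the paper's: the published proof is a short contradiction argument that invokes Shustin's characterization (Theorem \ref{thm:Shustin's characterization}) under the rank condition, precisely as you do, with the conversion ``c-vector $\Rightarrow$ $\delta$-nodal $\K$-curve with $\D_f=\D_{\w}$'' left implicit. The gap is in the step you yourself flag as delicate, and your proposed repair does not work as stated. You must rule out that every $f\in\Sev^{\circ}(\K)$ with coordinatewise valuation $\w$ lies in the proper closed locus $Z\subset\Sev$ of curves more degenerate than $\delta$-nodal, and you propose to do this by comparing $\Tr(\Sev)$ with $\Tr(Z^{\circ})$, ``which has strictly smaller dimension.'' But the bad scenario --- the whole valuation fibre over $\w$ contained in $Z^{\circ}(\K)$ --- only tells you that $\w\in\Tr(Z^{\circ})$, and that is perfectly consistent with $\dim\Tr(Z^{\circ})<\mr{dim}(\Sev)$: a point, even one of large rank, can certainly lie on the smaller-dimensional subset $\Tr(Z^{\circ})\subset\Tr(\Sev)$. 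So the dimension count on tropicalizations excludes nothing, and the vectors $\w$ for which your argument breaks are exactly those lying in $\Tr(Z^{\circ})$; the theorem must cover them too.

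Two honest repairs are available. First, Zariski density of tropical fibres: for an irreducible very affine variety $X$ and a rational $\w\in\Tr(X)$, the set of $\K$-points of $X$ with valuation $\w$ is Zariski dense in $X$ (this is Payne's theorem \cite{Payne}, which the paper already cites in the proof of Lemma \ref{lem:geometric}); density immediately prevents the fibre from being contained in the proper closed subvariety $Z$. Second, you can run your dimension comparison at the level of initial schemes rather than tropicalizations: if the fibre over $\w$ were contained in $Z^{\circ}(\K)$, then Lemma \ref{lem:geometric}, applied to both $\Sev^{\circ}$ and $Z^{\circ}$, shows that $\n\w\Sev$ and $\n\w Z$ have the same closed points; but initial degenerations are flat limits (Remark \ref{rem:Flat}), so every component of $\n\w\Sev$ has dimension $\mr{dim}(\Sev)$, while $\n\w Z$ has dimension at most $\dim Z<\mr{dim}(\Sev)$ --- a contradiction. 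With either repair in place, the rest of your argument (valuation equal to $\w$ forces $\D_f=\D_{\w}$, hence $\mr{rank}(\D_f)=\mr{rank}(\w)>r$, contradicting the rank rigidity of Theorem \ref{thm:Shustin's characterization}) is correct and coincides with the paper's proof.
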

\begin{proof} Suppose $\w$ is a c-vector of $\Sev$ with
$\mr{rank}(\w)\ge \mr{dim}(\Sev)$. In \S\ref{sec:nodal
curves}, we showed that $\w$ must satisfy
Shustin's combinatorial characterization, i.e.,
the subdivision $\D_{\w}$ is simple-nodal and
$\mr{rank}(\w)=\mr{dim}(\Sev)$. Thus the rank of any
c-vector of $\Sev$ cannot by strictly larger than
$\mr{dim}(\Sev)$.
\end{proof}
Also if $\w$ is a c-vector of $\Sev$ with
maximal rank $\mr{dim}(\Sev)$, Shustin's combinatorial characterization says that  the regular
subdivision $\D_{\w}$ must be simple and nodal.
Furthermore, if we impose one more restriction on
the parallelograms in $\D_{\w}$, we can obtain a
{\em complete} description of  initial schemes
$\n\w\Sev$ as presented in the next theorem following lemma which provides a geometric characterization of initial schemes.

\begin{lem}\label{lem:geometric}Let $X$ be a subvariety of an algebraic torus $\T$. 
The set of closed points of  $\n\w X$  is equal
to
\[\{\bar{z}\in\T: \text{ there exists }z=\bar{z}t^{\w}+l.o.t.\in
X(\K)\subset\T(\K)\},\] where $l.o.t.$ stands for
``lower order terms'' and $z$ is in
vector-notation.
\end{lem}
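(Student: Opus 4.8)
The plan is to prove the two inclusions separately, the first being elementary and the second being a lifting statement that I would deduce from the flat degeneration of Remark~\ref{rem:Flat}. First unravel the notation: the condition $z=\bar z\,t^{\w}+l.o.t.$ means precisely that $Val(z_i)=w_i$ with leading coefficient $\bar z_i$ for every $i$, and since $\bar z\in\T$ each $\bar z_i\in\C^{*}$. For the inclusion of the right-hand set into (the closed points of) $\n\w X$, I would take such a $z\in X(\K)$ and an arbitrary $f=\sum_a c_a x^a\in I_X$ (with $c_a\in\C$, since $X$ is defined over $\C$). Each monomial satisfies $Val(z^a)=a\cdot\w$ with leading coefficient $\bar z^a$, so in the expansion of $0=f(z)$ the coefficient of the top power $t^{\gamma}$, where $\gamma$ is the $\w$-degree of $f$, is exactly $\sum_{a\cdot\w=\gamma}c_a\bar z^a=\n\w f(\bar z)$; hence $\n\w f(\bar z)=0$. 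As this holds for every $f\in I_X$ and $\n\w I_X$ is by definition generated by the $\n\w f$, we conclude $\bar z\in V(\n\w I_X)=\n\w X$.

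The reverse (lifting) inclusion is the substantial one, and here I would use the flat degeneration of Remark~\ref{rem:Flat}. Fix a closed point $\bar z\in\n\w X\subset\T$, and form the Gr\"obner family $\mathcal X:=V(\mathcal I)\subset\mathbb{A}^{1}_{s}\times\T$, where $\mathcal I\subset\C[s][\Z^n]$ is generated by $s^{\gamma_f}f(s^{-\w}x)$ for $f\in I_X$, with $\gamma_f$ the $\w$-degree of $f$. Reducing modulo $s$ sends these generators to the $\n\w f$, so the fibre of $\mathcal X$ over $s=0$ is $\n\w X$, while the fibre over $s=1$ is $X$. By Remark~\ref{rem:Flat} (see Eisenbud, \S 15.8) this family is flat over $\mathbb{A}^1_s$; equivalently $s$ is a nonzerodivisor modulo $\mathcal I$, so no irreducible component of $\mathcal X$ lies in $\{s=0\}$, and every component dominates $\mathbb{A}^1_s$.

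Since $\bar z\in\n\w X$ lies on some component $C$ of $\mathcal X$ and $C$ dominates $\mathbb{A}^1_s$, the function $s$ is nonconstant on $C$ and $\dim C\ge1$. I would then choose an irreducible curve $\Gamma\subseteq C$ through $\bar z$ not contained in $\{s=0\}$ (a general curve section through $\bar z$ works, as $\{s=0\}\cap C$ is a proper closed subset), normalize it, and localize at a point over $\bar z$ to obtain a discrete valuation ring with uniformizer $u$ and residue field $\C$. This produces an arc $(s(u),x(u))$ with each $x_i(u)=\bar z_i+O(u)$ a unit and $s(u)=c\,u^{m}+\cdots$, where $c\in\C^{*}$ and $m=\mr{ord}_u(s)>0$. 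Dividing the relations $s^{\gamma_f}f(s^{-\w}x)=0$ by $s^{\gamma_f}$ shows that $x_0(u):=s(u)^{-\w}x(u)$ lies in $X(\C((u)))$, with $x_{0,i}(u)=\bar z_i\,c^{-w_i}u^{-m w_i}+(\text{higher order in }u)$.

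Two adjustments then finish the argument and, I expect, constitute the only delicate point. First, reparametrizing $u\mapsto\lambda u$ with $\lambda^{-m}=c$ (possible since $\C$ is algebraically closed) turns the leading coefficient of $x_{0,i}$ into exactly $\bar z_i$. Second, the substitution $u=t^{-1/m}$ embeds $\C((u))$ into $\K$: Laurent series in $u$ (exponents bounded below) become Puiseux series in $t$ (exponents bounded above), and the lowest $u$-exponent $-m w_i$ becomes the top $t$-exponent $w_i$, which reconciles the discrete valuation $\mr{ord}_u$ (a minimum) with the valuation $Val$ on $\K$ (a maximum). The image $z:=x_0$ then lies in $X(\K)$ and has the form $z=\bar z\,t^{\w}+l.o.t.$, as required. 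The hard part is thus the reverse inclusion: the existence of an arc through $\bar z$ that meets the generic fibre rests on flatness of the degeneration (which forbids components trapped in $\{s=0\}$), while the scaling and the sign-reversing substitution are exactly what align the two valuation conventions and pin the leading term to $\bar z\,t^{\w}$.
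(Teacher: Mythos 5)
Your first inclusion is exactly the paper's own argument: for $z=\bar z\,t^{\w}+l.o.t.\in X(\K)$ and $f\in I_X$, the coefficient of $t^{\gamma}$ in $f(z)=0$ is precisely $\n\w f(\bar z)$, hence $\bar z\in V(\n\w I_X)$. For the reverse (lifting) inclusion the paper gives no proof at all --- it cites \cite{Katz2}, \cite{Kazarnovskii1}, \cite{Payne} --- so your Gr\"obner-degeneration argument is a genuinely different, self-contained route, and its skeleton is sound: flatness of the family of Remark \ref{rem:Flat} (with $\mathcal{I}$ generated by $s^{\gamma_f}f(s^{-\w}x)$ for \emph{all} $f\in I_X$, which is what makes the fibre at $s=0$ equal to $\n\w X$) forbids components of $\mathcal{X}$ inside any fibre; an irreducible curve through $\bar z$ dominating $\mathbb{A}^1_s$ is normalized to produce an arc; dividing the relations by $s^{\gamma_f}$ off $\{s=0\}$ puts $x_0=s^{-\w}x$ in $X$; and the two final normalizations (rescaling $u$ to fix the leading coefficients, and the exponent-reversing substitution $u=t^{-1/m}$ matching $\mathrm{ord}_u$, a minimum, with $Val$, a maximum) are correctly handled.

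There is, however, one step that is false as stated: the substitution $u=t^{-1/m}$ does \emph{not} embed $\C((u))$ into $\K$, because the paper's $\K$ consists of \emph{locally convergent} Puiseux series, while a formal Laurent series such as $\sum_{k\ge 0}k!\,u^k$ converges nowhere. What rescues your argument is that the series you actually produce are not arbitrary elements of $\C((u))$: $s(u)$ and the $x_i(u)$ are expansions of regular functions on an algebraic curve at a smooth point of its normalization (equivalently, they are algebraic over $\C(u)$), hence convergent on a small disc, and therefore so are the $x_{0,i}(u)=s(u)^{-w_i}x_i(u)$; only for such series does $u=t^{-1/m}$ land in $\K$. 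You need to add this observation --- it is exactly the point where ``locally convergent'' is used, and without it the proof only produces a point of $X$ over formal Puiseux series. A smaller looseness: your parenthetical justification of the curve selection (``a general curve section through $\bar z$ works'') is not quite right, since a general linear section through a fixed point of an irreducible variety can be reducible (e.g.\ a quadric cone sectioned through its vertex); the fact you need --- through any point of an irreducible quasi-projective variety of dimension $\ge 1$ there is an irreducible curve not contained in a prescribed proper closed subset --- is true and standard, but should be invoked as such rather than via general hyperplane sections.
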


\begin{proof} The proof for the inclusion $\subset$ can be found in \cite[Lemma 4.15]{Katz2}, \cite[Proposition
4]{Kazarnovskii1}, \cite{Payne}.
 Let us consider the other inclusion $\supset$. Suppose $z=\bar{z}t^{\w}+l.o.t.\in
 X(\K)$ and let $f\in I(X)$. It is enough to show that $\n\w f(\bar{z})=0$, which follows
 from the fact that $\n\w f(\bar{z})$ is the constant term of $f(z)\cdot t^{-\gamma}\in\C[t]$, where
 $\gamma$ is the $t-\w$-degree of $f$.
\end{proof}

\begin{thm}\label{thm:No nonprimitive}
Let $\w$ be a c-vector of  $\Sev$. Suppose that
$\w$ satisfies the following conditions:
\begin{itemize}
\item The rank of $\w$ is maximal, that is, $\mr{rank}(\w)=\mr{dim}(\Sev)$;
\item The regular subdivision $\D_{\w}$ has no non-primitive parallelogram.
\end{itemize}
Then the following hold true:
\begin{enumerate}
\item As the sets of closed points, $\n\w\Sev$ is equal to
$\mb{V}=\mb{V}_{\partial\D_{\w}, nodal}$. Thus,
$\n\w\Sev$ is a union of finitely many
translations of the torus $\mb{G}^e$, the
identity component of
$\mb{G}=\mb{G}_{\partial\D_{\w}, nodal}$.
\item The weight of $\w$ on $\n\w\Sev$, that is, the number of such translations of the
torus $\mb{G}^e$ counted with multiplicity, is
equal to
\[\bsm_{\Sev}(\w)=l(\mb{V})\cdot
\widetilde{\prod}\mr{length(\mr{Edges}(\D_{\w}))},\]
where
\begin{enumerate}
\item $l(\mb{V})$ is the number of connected components of $\mb{V}$;
\item $\widetilde{\prod}\mr{length(\mr{Edges}(\D_{\w}))}$ is the product of the
lattice lengths of the  edges  which are
representatives of each equivalence class in
$\mr{Edges}(\D_{\w})$, where we define an equivalence
relation as follows: let $e\sim e'$ if $e$ and
$e'$ are the parallel edges of a parallelogram in
$\D_{\w}$ and extend it by transitivity.
\end{enumerate}

\end{enumerate}
\end{thm}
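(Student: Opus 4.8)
The plan is to establish the set-theoretic identity~(1) first, using Lemma~\ref{lem:geometric} to pass to leading terms of Puiseux-series points, and then to extract the scheme-theoretic multiplicity in~(2) from Shustin's patchworking enumerations (\S\ref{sec:Patchworking}).

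For part~(1), recall that Lemma~\ref{lem:geometric} identifies the closed points of $\n\w\Sev$ with the leading terms $\bar z$ of points $z\in\Sev(\K)$ satisfying $Val(z)=\w$. The hypothesis that $\D_{\w}$ has no non-primitive parallelogram means that every parallelogram of $\D_{\w}$ is primitive, so the set of special points is empty, $\mc{A}=\D\cap\Z^2$, and the ambient torus $\T_{\mc{A}}$ of $\mb{V}=\mb{V}_{\partial\D_{\w},nodal}$ is the whole big torus $\T_{\D}$; this is precisely what lets one compare $\n\w\Sev$ and $\mb{V}$ inside a single torus, with no coordinate forced to vanish on a parallelogram. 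I would then prove the two inclusions. Given a closed point $\bar z$ of $\n\w\Sev$, Lemma~\ref{lem:geometric} supplies $f\in\Sev(\K)$ with $Val_f=\w$ and leading term $\bar z$; since $f$ defines a $\delta$-nodal curve with $\mr{rank}(\D_f)=\mr{rank}(\w)=\mr{dim}(\Sev)$, Theorem~\ref{thm:Shustin's characterization} applies, and the face-restrictions of $\bar z$ are exactly the initial polynomials $\n{\al_i}f$, which are rational and unibranch on each triangle and a product of two binomial powers on each parallelogram. These are precisely the conditions defining $\mb{V}$, so $\bar z\in\mb{V}$. Conversely, given $F\in\mb{V}$, Shustin's patchworking construction (Enumeration~2) produces an $f\in\Sev(\K)$ whose dual subdivision is $\D_{\w}$ and whose leading term is $F$, whence $F\in\n\w\Sev$ by Lemma~\ref{lem:geometric}. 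Feeding the resulting set equality into the structure theorem for $\mb{V}_{\partial\D_{\w},nodal}$ (\S\ref{sec:SD,nodal}), which presents $\mb{V}$ as a translate of the closed subgroup $\mb{G}$ and hence as a disjoint union of $l(\mb{V})$ copies of its identity component $\mb{G}^{e}$, gives statement~(1).

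For part~(2), the weight $\bsm_{\Sev}(\w)$ is the sum over the minimal primes of $\n\w I_{\Sev}$ of their multiplicities, that is, the number of $\mb{G}^{e}$-translates in $\n\w\Sev$ counted with the length of the initial scheme along each. Because $\mb{V}$ is a single $\mb{G}$-torsor, translation by $\mb{G}$ permutes its $\mb{G}^{e}$-components transitively and leaves $\n\w\Sev$ invariant, so these lengths all equal one integer $\mu$ and $\bsm_{\Sev}(\w)=l(\mb{V})\cdot\mu$; it remains to identify $\mu=\widetilde{\prod}\mr{length(\mr{Edges}(\D_{\w}))}$. I would read $\mu$ off from the two enumerations. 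Fixing generic Puiseux-series coefficients $c_b(t)$ at a subset $\mc{B}\subset\mr{Vertices}(\D_{\w})$ with $|\mc{B}|=\mr{dim}(\Sev)+1$ cuts $\Sev(\K)$ down to a zero-dimensional scheme: Enumeration~2 counts its $\prod 2\mr{area}(\mr{Triangles})$ points, while Enumeration~1 (in the nodal form of Remark~\ref{rem:simple-nodal}) counts the $\prod 2\mr{area}(\mr{Triangles})/\widetilde{\prod}\mr{length(\mr{Edges}(\D_{\w}))}$ distinct leading terms among them. Since the fibre of the leading-term map over any one of these points consists exactly of the $\widetilde{\prod}\mr{length(\mr{Edges}(\D_{\w}))}$ choices of deformation pattern, each leading term is the limit of precisely that many Puiseux branches.

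The main obstacle is turning this branch count into an algebraic multiplicity. Writing $H$ for the slice fixing the coefficients at $\mc{B}$, I must show (i) that forming initial schemes commutes with this generic slice, $\n\w(\Sev\cap H)=\n\w\Sev\cap\bar H$, and (ii) that for the resulting zero-dimensional flat degeneration the length of the special fibre at a point equals the number of $\K$-points of the general fibre specializing to it. Part~(ii) is the usual conservation of number for flat families, but its hypotheses must be checked here, namely flatness together with the fact that $\bar H$ meets each $\mb{G}^{e}$-translate transversally with local intersection multiplicity one; the latter is exactly where the explicit binomial presentation of $\mb{G}$ from Theorem~\ref{thm:SD,rational} is needed, since it governs the relevant lattice indices. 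Granting~(i) and~(ii), the length of $\n\w(\Sev\cap H)$ at each leading term is the branch count $\widetilde{\prod}\mr{length(\mr{Edges}(\D_{\w}))}$, which therefore equals $\mu$; summing over the $l(\mb{V})$ components gives $\bsm_{\Sev}(\w)=l(\mb{V})\cdot\widetilde{\prod}\mr{length(\mr{Edges}(\D_{\w}))}$, as claimed.
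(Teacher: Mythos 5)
Your part (1) is, step for step, the paper's own argument: Lemma~\ref{lem:geometric} plus Shustin's characterization (Theorem~\ref{thm:Shustin's characterization}) gives $\n\w\Sev\subset\mb{V}$, patchworking gives the reverse inclusion, and the paper makes the same observation you do that the absence of non-primitive parallelograms forces all leading coefficients $c_a^{\circ}$ to be nonzero (so $\w$ is concave and the comparison happens inside one torus). For part (2) your skeleton is also the paper's: slice by the coordinate subgroup $\mb{L}_{\mc{B}}$ attached to the set $\mc{B}$ of Remark~\ref{rem:simple-nodal}, feed in Enumerations 1 and 2, and divide. The only real difference is how the two enumerations get converted into the weight. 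The paper does this entirely inside the tropical intersection formalism of \S\ref{sec:IT}: Enumeration 1 computes the degree $(\Tr(\mb{V})\cdot\Tr(\mb{L}_{\mc{B}}))$, Enumeration 2 computes $(\Tr(\n\w\Sev)\cdot\Tr(\mb{L}_{\mc{B}}))$, and since $\Tr(\n\w\Sev)=\bsm_{\Sev}(\w)\cdot\Tr(\mb{G}^e)$ while $\Tr(\mb{V})=l(\mb{V})\cdot\Tr(\mb{G}^e)$ (Remark~\ref{Equation1}), taking the ratio yields the formula. The facts you label (i) and (ii) --- compatibility of the initial degeneration with a generic slice, and conservation of number for the resulting zero-dimensional family --- are exactly the content of the statement in \S\ref{sec:IT} that the degree of $\Tr(X_1)\cdot\Tr(X_2)$ equals the number of points of $X_1\cap gX_2$ for generic $g$, which the paper cites from the literature rather than re-proves. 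So your plan is sound, but as written it stops precisely where the paper's citation starts: instead of ``granting'' (i) and (ii), you should either invoke that cited fact applied to the pairs $(\mb{V},\mb{L}_{\mc{B}})$ and $(\n\w\Sev,\mb{L}_{\mc{B}})$, or accept that proving them is a substantial piece of tropical intersection theory and not a routine verification.

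One intermediate claim of yours is unjustified, though fortunately unnecessary: that translation by the full group $\mb{G}$ leaves the scheme $\n\w\Sev$ invariant, so that all components carry one common length $\mu$. The initial scheme is invariant under the identity component $\mb{G}^e$ (which is why its components are $\mb{G}^e$-translates), but nothing gives an action of the finite group $\mb{G}/\mb{G}^e$ preserving the scheme structure of $\n\w\Sev$, and in general initial degenerations can have components of unequal multiplicity. The paper never needs equal multiplicities: its chain of equalities computes only the total $\bsm_{\Sev}(\w)=\sum_i m_i$, since each component, whatever its multiplicity $m_i$, meets a generic translate of $\mb{L}_{\mc{B}}$ in the same number of points $(\Tr(\mb{G}^e)\cdot\Tr(\mb{L}_{\mc{B}}))$. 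You can delete the torsor argument and run your count for the total multiplicity directly; the final division by $l(\mb{V})\cdot(\Tr(\mb{G}^e)\cdot\Tr(\mb{L}_{\mc{B}}))$ then gives $\bsm_{\Sev}(\w)=l(\mb{V})\cdot\widetilde{\prod}\mr{length(\mr{Edges}(\D_{\w}))}$ without ever knowing the individual $m_i$.
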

\begin{proof} We prove the first statement. Applying  Lemma \ref{lem:geometric} to
 our case $X=\Sev$, we  see that
$\bar{c}=(\bar{c}_a)_{a\in\D\cap\Z^2}\in\n\w\Sev$
if and only if there exists a 1-parameter
equisingular family of nodal curves with $\delta$
nodes defined by
\[f_{(t)}(x,y)=\sum_{a=(a_1,a_2)\in\D\cap\Z^2}c_a(t)x^{a_1}y^{a_2}\] such
that \[c_a(t)=\bar{c}_at^{\w_a}+l.o.t.\]
Thus, $\bar{c}$ is closely related to the
tropicalization of $f_{(t)}$. Let us recall the
definition of the tropicalization of
$f_{(t)}$:
Let $\nu_{f_{(t)}}$ be the concave hull of
$\w=Val_{f_{(t)}}$ and rewrite $f_{(t)}$ with
respect to $\nu_{f_{(t)}}$:
\[f_{(t)}(x,y)=\sum_{a=(a_1,a_2)\in\D\cap\Z^2}c_a(t)x^{a_1}y^{a_2}\] such
that
\[c_a(t)=c_a^{\circ}t^{\nu_{f_{(t)}}(a)}+l.o.t.,\]
where $c_a^{\circ}$ is a complex number such that
$c_a^{\circ}=\bar{c}_a$ exactly when
$\nu_{f_{(t)}}(a)=\w(a)$ and otherwise
$c_a^{\circ}=0$.
This collection of complex numbers,
$c^{\circ}=\{c_a^{\circ}:a\in\D\cap\Z^2\}$,
together with the regular subdivision
$\D_{\nu_{f_{(t)}}}=\D_{\w}:\D_1\cup\cdots\D_m$
of the Newton polygon $\D$ of $f_{(t)}$ gives
rise to a collection of complex polynomials
$f_1,\dots,f_m$ with $\mr{Newton}(f_i)=\D_i$ for
$i=1,\dots,m$.
Now Shustin's geometric characterization
\ref{thm:Shustin's characterization} implies that
under the hypothesis of our theorem, if
$\bar{c}\in\n\w\Sev$, then
$\bar{c}=c^{\circ}\in\mb{V}_{\partial\D_{\w},
nodal}$. (Notice that in our case that there is
no non-primitive parallelogram in the subdivision
$\D_{\w}$, $c^{\circ}(a)\ne 0$ for all
$a\in\D\cap\Z^2$, which also
implies that $\nu_{f_{(t)}}=\w$, that is, $\w$ should be concave.)
Thus, $\n\w\Sev$ is a subset of
$\mb{V}_{\partial\D_{\w}, nodal}$. The other
inclusion follows easily from Shustin's
patchworking theory \S\ref{sec:Patchworking}.

Now we prove the second statement.
 Let $\mc{B}$ be a subset of
$\mr{Vertices}(\D_{\w})$ with the properties given in
Remark \ref{rem:simple-nodal}, and let
$\mb{L}_{\mc{B}}$ be the $n-(\mr{dim}(\Sev)+1)$
 dimensional coordinate subspace of $\T_{\D}$ defined by
the equations $x_b=1, \quad b\in\mc{B}$. Then
Shustin's first enumeration (\ref{first}) deduces
the following:
\[(\Tr(\mb{V})\cdot\Tr(\mb{L}_{\mc{B}}))=\frac{\prod 2\mr{area}(\mr{Triangles})}{\widetilde{\prod}\mr{length(\mr{Edges}(\D_{\w}))}}.\]
Moreover, from the second enumeration
(\ref{second}), we obtain the following:
\[((\Tr(\n\w\Sev)\cdot\Tr(\mb{L}_{\mc{B}})))=\prod
 2\mr{area}(\mr{Triangles}).\]
Thus,  \[\begin{array}{ccl}
\bsm_{\Sev}(\w)(\Tr(\mb{G}^e)\cdot\Tr(\mb{L}_{\mc{B}}))
&=&(\Tr(\n\w\Sev)\cdot\Tr(\mb{L}_{\mc{B}}))\\
&=&\prod
 2\mr{area}(\mr{Triangles})\\ &=&\widetilde{\prod}\mr{length(\mr{Edges}(\D_{\w}))} \cdot\frac{\prod
 2\mr{area}(\mr{Triangles})}{\widetilde{\prod}\mr{length(\mr{Edges}(\D_{\w}))}}\\
&=&\widetilde{\prod}\mr{length(\mr{Edges}(\D_{\w}))}(\Tr(\mb{V})\cdot\Tr(\mb{L}_{\mc{B}}))\\
&=&l(\mb{V})\widetilde{\prod}\mr{length(\mr{Edges}(\D_{\w}))}(\Tr(\mb{G}^e)\cdot\Tr(\mb{L}_{\mc{B}}))
\end{array}\]
Thus we obtain
\[\bsm_{\Sev}(\w)=l(\mb{V})\widetilde{\prod}\mr{length(\mr{Edges}(\D_{\w}))}.\]

\end{proof}

Now we consider the case that there is a
non-primitive parallelogram  in the subdivision
$\D_{\w}$. We need to consider
a certain projection. Let us begin with a general
setting.  Let $\SD$ be a nodal subdivision which
may have non-primitive parallelograms. As we
studied in \S \ref{sec:SD,nodal}, in this case,
the variety $\mb{V}_{\partial\SD,nodal}$ is
contained in every coordinate hyperplane $H_a$ of
the ambient projective space $\Pro_{\D}$ defined
by $x_a=0$, where $a$ is a special  point in a
non-primitive parallelogram. In
 particular, $\mb{V}_{\partial\SD,nodal}$ is disjoint from the big open torus
 $\T_{\D}$. Let $H_{\SD}$ be the intersection of all such coordinate
 hyperplanes $H_a$.  Let $\T_{\SD}\subset H_{\SD}$
 be the big open torus in $H_{\SD}$ so that
 $\mb{V}_{\partial\SD,nodal}\subset\T_{\SD}$. Let $\pi_{\SD}$ be
 the projection from $\Pro_{\D}$ to $H_{\SD}$, \[\pi_{\SD}:\Pro_{\D}\rightarrow H_{\SD}.\]
 Now we consider the case when the subdivision $\SD$ is given by a
 c-vector $\w$ of $\Sev$ with maximal rank, that is, $\SD=\D_{\w}$
 and $\mr{rank}(\w)=\mr{dim}(\Sev)$. We impose one more condition that  $\w$ is a {\em regular} point of the
 tropical Severi variety  $\Tr(\Sev)$, that is,
 $\Tr(\Sev)$ coincides with  an affine space of dimension
 $\mr{dim}(\Sev)$ locally near $\w$. (Warning: the maximality of rank of $\w$ does
 not necessarily
 imply that $\w$ is a regular point of $\Tr(\Sev)$.)
Then we obtain the following theorem.

\begin{thm}\label{thm:Existence of nonprimitive}
Let $\w$ be a c-vector of $\Sev$. Suppose that
$\w$ satisfies the following conditions:
\begin{itemize}
\item The rank of $\w$ is maximal, that is, $\mr{rank}(\w)=\mr{dim}(\Sev)$ ;
\item $\w$ is a regular point in $\Tr(\Sev)$.
\end{itemize}
Then the following statements hold true:
\begin{enumerate}
\item The projection $\pi_{\D_{\w}}$ is a  bijection   from  $\n\w\Sev$
to $\mb{V}=\mb{V}_{\partial\D_{\w},nodal}$. \item
The initial scheme $\n\w\Sev$ is the union of
finitely many translations of a torus
$\mb{G}^{e*}$ of dimension  $\mr{dim}(\Sev)$ which is
sent to $\mb{G}^e$ by the projection
$\pi_{\D_{\w}}$, where $\mb{G}^e$ is the identity
component of $\mb{G}=\mb{G}_{\partial\D_{\w},
nodal}$.
\item The weight of $\w$ on $\n\w\Sev$, that is, the number of such translations   of the
torus $\mb{G}^{e*}$ counted with multiplicity, is
equal to
\[\bsm_{\Sev}(\w)=l(\mb{V})\cdot
\widetilde{\prod}\mr{length(\mr{Edges}(\D_{\w}))},\]
as defined in the previous theorem.

\end{enumerate}

\end{thm}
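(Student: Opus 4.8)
The plan is to reproduce the architecture of the proof of Theorem~\ref{thm:No nonprimitive}, inserting the projection $\pi_{\D_{\w}}$ to absorb the fact that $\mb{V}=\mb{V}_{\partial\D_{\w},nodal}$ now sits inside the coordinate subvariety $H_{\D_{\w}}$ rather than in the open torus $\T_{\D}$. First I would extract what regularity gives for free: since $\w$ is a regular point of $\Tr(\Sev)$, the structure theorem recorded in \S\ref{subsec:Version2} shows that $\n\w\Sev$ is a union of finitely many translations of one subtorus of dimension $\mr{dim}(\Sev)$, which I name $\mb{G}^{e*}$. This already supplies the torus occurring in part (2); the remaining tasks are to identify $\mb{G}^{e*}$ with $\mb{G}^e$ through $\pi_{\D_{\w}}$ and to count the translations with multiplicity.

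Next I would show $\pi_{\D_{\w}}(\n\w\Sev)\subseteq\mb{V}$ by the argument of Theorem~\ref{thm:No nonprimitive}. Given $\bar c\in\n\w\Sev$, Lemma~\ref{lem:geometric} produces an equisingular family $f_{(t)}$ with $\delta$ nodes whose leading coefficients are $\bar c$, and Shustin's geometric characterization (Theorem~\ref{thm:Shustin's characterization}) places its initial datum $c^{\circ}$ in $\mb{V}$. Because $c^{\circ}\in\mb{V}$ is nonzero at every non-special lattice point, one has $\nu_{f_{(t)}}(a)=\w(a)$ and $c^{\circ}_a=\bar c_a$ there, while a non-primitive parallelogram forces $c^{\circ}_a=0$ at each special point $a$; as $\pi_{\D_{\w}}$ is exactly the coordinate projection deleting the special coordinates, this reads $c^{\circ}=\pi_{\D_{\w}}(\bar c)$. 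Surjectivity onto $\mb{V}$ follows from patchworking (\S\ref{sec:Patchworking}) as before, each point of $\mb{V}$ being the initial datum of some $f_{(t)}$. Since $\pi_{\D_{\w}}$ is, up to translation, a homomorphism of tori and $\mb{G}^{e*}$ and $\mb{G}^e$ both have dimension $\mr{dim}(\Sev)$, surjectivity upgrades to $\pi_{\D_{\w}}(\mb{G}^{e*})=\mb{G}^e$, so $\pi_{\D_{\w}}|_{\mb{G}^{e*}}$ is an isogeny.

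The crux of the theorem, and the step I expect to be the main obstacle, is injectivity, i.e.\ that this isogeny has degree one. The difficulty is genuinely new: the special coordinates $\bar c_a$ are nonzero on $\n\w\Sev\subset\T_{\D}$ yet invisible to $c^{\circ}$, so one must recover them uniquely from the non-special data. I would argue that the value $\bar c_a$ at a special point of a parallelogram $\D_j$ is a lower-order coefficient pinned down by the requirement that the family $f_{(t)}$ preserve the nodes carried by that parallelogram: the initial curve $V(\n{\al_j}f)$ is a product of two pencils of parallel lines, and keeping its nodes through the equisingular deformation imposes, for each special point, a single relation expressing $\bar c_a$ as a monomial in the coefficients already fixed by $c^{\circ}$. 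Regularity of $\w$ is precisely what guarantees that these deformation data are rigid, so that no two points of $\n\w\Sev$ share the same non-special coordinates; on $\mb{G}^{e*}$ the relations say each special coordinate function is a monomial in the non-special ones, whence $\ker\big(\pi_{\D_{\w}}|_{\mb{G}^{e*}}\big)=\{1\}$ and $\pi_{\D_{\w}}|_{\mb{G}^{e*}}\colon\mb{G}^{e*}\xrightarrow{\ \sim\ }\mb{G}^e$. This establishes parts (1) and (2) together.

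Finally, for the weight in part (3), I would transport the intersection-theoretic computation of Theorem~\ref{thm:No nonprimitive} across $\pi_{\D_{\w}}$. Let $\mb{L}_{\mc{B}}$ fix the coefficients at a set $\mc{B}\subset\mr{Vertices}(\D_{\w})$ of non-special points as in Remark~\ref{rem:simple-nodal}. The isomorphism $\pi_{\D_{\w}}|_{\mb{G}^{e*}}$ matches $\mb{L}_{\mc{B}}$ in the two ambient tori and identifies the corresponding principal parallelepipeds, so that $(\Tr(\mb{G}^{e*})\cdot\Tr(\mb{L}_{\mc{B}}))=(\Tr(\mb{G}^e)\cdot\Tr(\mb{L}_{\mc{B}}))$. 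Feeding Enumeration~(\ref{second}), which gives $(\Tr(\n\w\Sev)\cdot\Tr(\mb{L}_{\mc{B}}))=\prod 2\mr{area}(\mr{Triangles})$, together with Enumeration~(\ref{first}) and $\Tr(\mb{V})=l(\mb{V})\,\Tr(\mb{G}^e)$ into the identity $\bsm_{\Sev}(\w)\,(\Tr(\mb{G}^{e*})\cdot\Tr(\mb{L}_{\mc{B}}))=(\Tr(\n\w\Sev)\cdot\Tr(\mb{L}_{\mc{B}}))$ then yields $\bsm_{\Sev}(\w)=l(\mb{V})\,\widetilde{\prod}\mr{length(\mr{Edges}(\D_{\w}))}$, exactly as in the previous theorem.
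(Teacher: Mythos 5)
Your proposal is correct and takes essentially the same approach as the paper: the paper disposes of parts (1)--(2) with the single remark that they are ``deduced straight-forwardly from the conditions on $\w$'' (your elaboration via the structure theorem for regular points, Lemma~\ref{lem:geometric} plus Shustin's characterization, patchworking, and the rigidity of the special coordinates is a fleshed-out version of exactly that), and its proof of part (3) is the same chain of equalities you give, combining Enumerations~(\ref{first}) and~(\ref{second}) with $\Tr(\mb{V})=l(\mb{V})\cdot\Tr(\mb{G}^e)$ and the equality of intersection numbers across the projection, which the paper verifies by a block-matrix determinant computation where you instead invoke the isomorphism $\pi_{\D_{\w}}|_{\mb{G}^{e*}}$. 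The only slip is notational: since $\mb{G}^{e}$ lives in $\T_{\mc{A}}$, the relevant intersection number in your part (3) should read $(\Tr(\mb{G}^{e})\cdot\pi_{*}(\Tr(\mb{L}_{\mc{B}})))$ rather than $(\Tr(\mb{G}^{e})\cdot\Tr(\mb{L}_{\mc{B}}))$, exactly as written in the paper.
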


\begin{proof}
The first and second statements are deduced
straight-forwardly from the conditions on $\w$.
 Let us show the last statement. It is a slight adjustment of the
proof in the previous theorem adding the
consideration of  the
projection $\pi_{\D_{w}}$.\\
 Shustin's first
enumeration (\ref{first}) deduces the following:
\[(\Tr(\mb{V})\pi_{*}(\Tr(\mb{L}_{\mc{B}})))=\frac{\prod 2\mr{area}(\mr{Triangles})}{\widetilde{\prod}\mr{length(\mr{Edges}(\D_{\w}))}},\]
where $\pi_{*}:\Real^n\rightarrow\Real^{|\mc{A}|}$ is the projection corresponding to $\pi=\pi_{\D_{\w}}$. 
Moreover, from the second enumeration
(\ref{second}), we obtain the following:
\[(\Tr(\n\w\Sev)\cdot\Tr(\mb{L}_{\mc{B}}))=\prod
 2\mr{area}(\mr{Triangles}).\]
Thus,  \[\begin{array}{ccl}
\bsm_{\Sev}(\w)(\Tr(\mb{G}^{e*})\cdot\Tr(\mb{L}_{\mc{B}}))
&=&(\Tr(\n\w\Sev)\cdot\Tr(\mb{L}_{\mc{B}}))\\
&=&\prod
 2\mr{area}(\mr{Triangles})\\ &=&\widetilde{\prod}\mr{length(\mr{Edges}(\D_{\w}))} \cdot\frac{\prod
 2\mr{area}(\mr{Triangles})}{\widetilde{\prod}\mr{length(\mr{Edges}(\D_{\w}))}}\\
&=&\widetilde{\prod}\mr{length(\mr{Edges}(\D_{\w}))}(\Tr(\mb{V})\cdot\pi_{*}(\Tr(\mb{L}_{\mc{B}})))\\
&=&l(\mb{V})\widetilde{\prod}\mr{length(\mr{Edges}(\D_{\w}))}(\Tr(\mb{G}^{e})\cdot\pi_{*}(\Tr(\mb{L}_{\mc{B}})))\\
&=&l(\mb{V})\widetilde{\prod}\mr{length(\mr{Edges}(\D_{\w}))}(\Tr(\mb{G}^{e*})\cdot\Tr(\mb{L}_{\mc{B}}))
\end{array}\]
The last equality can be seen easily by
considering the projection $\pi_{*}$ as follows:
by choosing a coordinate system,
$(\Tr(\mb{G}^e)\cdot\pi_{*}(\Tr(\mb{L}_{\mc{B}})))$
is the determinant of a matrix
$\left(\begin{tabular}{c|c} $M_1$ &
$M_2$\end{tabular}\right)$, where $M_1$ and $M_2$
are found from lattice bases of $\Tr(\mb{G}^e)$
and $\pi_{*}(\Tr(\mb{L}_{\mc{B}}))$,
respectively. Then
$(\Tr(\mb{G}_{\circ}^*)\cdot\Tr(\mb{L}_{\mc{B}}))$
is the determinant of a matrix of the form of
$\left(\begin{tabular}{c|c|c}$M_1$&$M_2$&$0$\\\hline$*$&$0$&$I$\end{tabular}\right)$,
where $I$ is the identity matrix. Therefore\\
$(\Tr(\mb{G}^e)\cdot\pi_{*}(\Tr(\mb{L}_{\mc{B}})))=
(\Tr(\mb{G}^{e*})\cdot\Tr(\mb{L}_{\mc{B}}))$.\\
Thus,
\[\bsm_{\Sev}(\w)=l(\mb{V})\widetilde{\prod}\mr{length(\mr{Edges}(\D_{\w}))}.\]
\end{proof}
\begin{rem}
 As a corollary, if $\n\w\Sev\ne\emptyset$ and the subdivision $\D_{\w}$  is either simple or nodal but not both,
 then $\mr{rank}(\w) < r$.
\end{rem}
\subsection{The degrees of Severi varieties}
\label{sec:Severi degree}
In this section, we study  Mikhalkin's
Correspondence theorem with respect to tropical
intersection theory. Let us review this theorem.

\begin{defn}\cite[Definition 2.41]{IMS}\label{defn:GeneralPosition}$ $\begin{enumerate}
\item Let $\SD$ be a  subdivision of $\D$.
We say that the distinct points
$x_1,\dots,x_{\zeta}\in\mb{Q}^2$ are in
{\em $\SD$-general position}, if the condition for
tropical curves to pass through
$x_1,\dots,x_{\zeta}$ (``base-point-condition'')
cuts out the tropical cone $\mc{T}C(\SD)$ either
the empty set, or a polyhedron of codimension
$\zeta$.
\item We say that the distinct points $x_1,\dots,x_{\zeta}$ are in $\D$-{\em general
position (or simply, generic points)}, if they are $\SD$-general for all
subdivisions $\SD$ of $\D$.
\end{enumerate}
\end{defn}
\begin{lem}\cite[Lemma 2.42]{IMS} For any given convex lattice polygon $\D$, the set of
$\D$-general configurations $x_1,\dots,x_{\zeta}$
is dense in $(\mb{Q}^2)^{\zeta}$.
\end{lem}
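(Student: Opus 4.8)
The plan is to exploit the finiteness of the combinatorial data and to reduce $\SD$-generality to the non-vanishing of finitely many determinants. First I would observe that, since $\D\cap\Z^2$ is finite, $\D$ admits only finitely many subdivisions $\SD$; by Definition \ref{defn:GeneralPosition}(2) a configuration is $\D$-general exactly when it is $\SD$-general for each of these finitely many $\SD$. Hence it suffices to prove that, for a \emph{fixed} $\SD$, the set of configurations that are not $\SD$-general is nowhere dense in $(\mb{Q}^2)^{\zeta}$: a finite union of nowhere dense sets is nowhere dense, and the $\D$-general configurations are precisely the complement of such a union.

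Next I would make the base-point condition explicit as a linear system on $\mc{T}C(\SD)$. For $\w$ in the relative interior of a maximal cone of $\mc{T}C(\SD)$ the tropical curve $\tau_{\w}$ has combinatorial type dual to $\SD$, and on each $2$-face $F$ of $\SD$ the supporting piecewise-linear function restricts to an affine function $g_F(\al)=u_F\cdot\al+c_F$ whose gradient $u_F\in\Real^2$ and constant $c_F\in\Real$ are \emph{linear} in $\w$. The condition that a point $x$ lie on the edge of $\tau_{\w}$ dual to an edge $e=F\cap F'$ of $\SD$ is the single linear equation $L_{e,x}(\w):=g_F(x)-g_{F'}(x)=(u_F-u_{F'})\cdot x+(c_F-c_{F'})=0$ (together with inequalities confining $x$ to the segment), whose coefficients as a functional of $\w$ depend affinely on $x$. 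Passing through $x_1,\dots,x_{\zeta}$ therefore breaks up, over the finitely many choices $\bs{e}=(e_1,\dots,e_{\zeta})$ of edges of $\SD$, into the polyhedra $P_{\bs e}=\mc{T}C(\SD)\cap\bigcap_i\{L_{e_i,x_i}=0\}$. Each $P_{\bs e}$ is cut by $\zeta$ linear equations, so it has codimension at most $\zeta$; it fails to have codimension exactly $\zeta$ only when $L_{e_1,x_1},\dots,L_{e_{\zeta},x_{\zeta}}$ are dependent modulo the linear span of $\mc{T}C(\SD)$, i.e. when a maximal minor $\det M_{\bs e}(x_1,\dots,x_{\zeta})$ vanishes. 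Each entry of $M_{\bs e}$ is affine in the coordinates of the $x_i$, so $\det M_{\bs e}$ is a polynomial on $(\Real^2)^{\zeta}$, and the non-$\SD$-general configurations for the choice $\bs e$ are contained in $\{\det M_{\bs e}=0\}$.

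Then the crux is to show that for every $\bs e$ this polynomial is not identically zero. Because $L_{e_i,x_i}$ depends only on $x_i$, and its $\w$-coefficient vector varies in the nonzero direction governed by the primitive normal of $e_i$, I would choose the points one at a time: having picked $x_1,\dots,x_{i-1}$ so that $L_{e_1,x_1},\dots,L_{e_{i-1},x_{i-1}}$ are independent, the affine family $x_i\mapsto L_{e_i,x_i}$ is not contained in their span (moving $x_i$ changes the functional in the direction $u_F-u_{F'}\ne 0$), so a generic $x_i$ keeps the collection independent. This produces an explicit configuration with $\det M_{\bs e}\ne 0$, proving $\det M_{\bs e}\not\equiv 0$. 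I expect this transversality step to be the main obstacle, since it is precisely the point at which one must verify that sliding a marked point genuinely enlarges the span of the passing-through conditions rather than leaving it unchanged.

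Finally I would assemble the density statement. For each $\SD$ the non-$\SD$-general configurations lie in the finite union $\bigcup_{\bs e}\{\det M_{\bs e}=0\}$ of proper real-algebraic hypersurfaces of $(\Real^2)^{\zeta}$, hence in a nowhere dense set; the union over the finitely many subdivisions $\SD$ is again nowhere dense, so its complement $U\subset(\Real^2)^{\zeta}$ is open and dense. Since $\mb{Q}^2$ is dense in $\Real^2$, every nonempty open ball about a point of $(\mb{Q}^2)^{\zeta}$ meets $U$ in a nonempty open set, which therefore contains a rational configuration. Thus the $\D$-general configurations are dense in $(\mb{Q}^2)^{\zeta}$, as claimed.
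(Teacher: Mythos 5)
Your overall scaffolding is sound, and for calibration note that the paper itself offers no proof of this lemma --- it is imported from \cite[Lemma 2.42]{IMS} --- so your attempt has to stand on its own. The correct parts: $\D$ has only finitely many lattice subdivisions; it suffices to show that for each fixed $\SD$ the non-$\SD$-general configurations lie in a closed nowhere dense subset of $(\Real^2)^{\zeta}$; the base-point condition stratifies into finitely many pieces according to which cell of $\tau_{\w}$ each $x_i$ lies on, each piece being cut out of $\mc{T}C(\SD)$ by linear conditions; and density in $(\mb{Q}^2)^{\zeta}$ follows at the end from density of an open set in $(\Real^2)^{\zeta}$. The gap is in the middle, and it is genuine.

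First, your incidence equation is wrong. Your $g_F$ are affine functions on the $2$-faces $F\subset\D$, and since $g_F$, $g_{F'}$ are restrictions of one continuous function they agree along $e=F\cap F'$; hence $\{g_F-g_{F'}=0\}$ is the line in $\Real^2$ spanned by the edge $e$ itself, for every $\w$ in the cone. So $L_{e,x}(\w)=g_F(x)-g_{F'}(x)=0$ says ``$x$ lies on the line through $e$'' and, as a condition on $\w$, is either vacuous or unsatisfiable; it is not the condition that $x$ lies on the edge of $\tau_{\w}$ dual to $e$. The correct condition, read off from the corner locus of $\alpha\mapsto\max_a\{a\cdot\alpha+\w(a)\}$, is $(a-a')\cdot x+\w(a)-\w(a')=0$ (plus inequalities), where $a,a'$ are the endpoints of $e$. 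Second --- and this is the fatal point --- in the correct equation the linear-in-$\w$ part, $\w(a)-\w(a')$, does not depend on $x$ at all; moving a marked point changes only the constant term $(a-a')\cdot x$. Hence whether the $\zeta$ functionals are independent modulo the span of $\mc{T}C(\SD)$ is decided by the combinatorial assignment $\bs{e}$ alone and can never be arranged by perturbing the points: your claim that $\det M_{\bs e}\not\equiv 0$ for every $\bs{e}$ is false. For example, if three points are assigned to the same edge $e$ of $\SD$, the three affine functionals are dependent for \emph{every} configuration, all maximal minors vanish identically, and your containment of the bad set in $\{\det M_{\bs e}=0\}$ says nothing. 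What rescues the lemma is the alternative your argument never invokes: Definition \ref{defn:GeneralPosition} allows the cut-out set to be \emph{empty}. The correct dichotomy for fixed $\bs{e}$ is: if the linear parts $\w\mapsto\w(a_i)-\w(a_i')$ are independent on the span of $\mc{T}C(\SD)$, then for every configuration the piece is empty or of codimension $\zeta$, with no genericity needed; if they admit a nonzero dependency $\lambda$, then consistency of the system forces $\sum_i\lambda_i\,(a_i-a_i')\cdot x_i=0$, a nontrivial linear condition on $(x_1,\dots,x_{\zeta})$ (nontrivial because $a_i\neq a_i'$), so outside a finite union of proper hyperplanes the system is inconsistent and the piece is empty. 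Replacing your determinant step by this dichotomy (and including the strata where a point sits at a vertex of $\tau_{\w}$, which impose two conditions each and are handled identically) repairs the proof; the rest of your bookkeeping then goes through.
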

To present the correspondence theorem we need one
more numeric invariant assigned to a subdivision
$\SD$ of a polygon $\D$, besides the rank of
$\SD$: suppose $\SD$ is nodal, that is, the
subpolygons are either triangles or
parallelograms.  Then the (Mikhalkin's) {\em
multiplicity} of $\SD$ is by definition
\[\mu(\SD):= \prod 2\mr{area}(\mr{Triangles}),\]  the
product of twice areas of all the triangles in
$\SD$.

\begin{thm}[Mikhalkin's Correspondence Theorem] \cite[Theorem 2.43]{IMS},\cite{Mikhalkin}\label{thm:correspondence}

Let $\mc{P}$ be a set of
$r=\mr{dim}(\Sev)$ points in $\Real^2$ which are in $\D$-general position.
Then   
\[ \mr{degree}(\Sev)=\sum_{\w} \mu(\w),\] where the sum runs over all
 tropical  curves $\tau_{\w}$ of
degree $\D$ passing through all the points in
$\mc{P}$. ($\mu(\w)$ is by
definition $\mu(\D_{\w})$.)

\end{thm}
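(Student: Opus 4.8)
The plan is to express $\mr{degree}(\Sev)$ as a tropical intersection number and then evaluate it cone-by-cone with the weight formula of Theorems \ref{thm:No nonprimitive} and \ref{thm:Existence of nonprimitive}. Recall from the introduction that $\mr{degree}(\Sev)$ equals the number of $\delta$-nodal curves through $r=\mr{dim}(\Sev)$ generic points. I would lift the configuration $\mc{P}=\{q_1,\dots,q_r\}\subset\Real^2$ to points $p_1,\dots,p_r\in\TK^2$ with $\mr{Val}(p_i)=q_i$ and generic leading coefficients; passing through $p_i$ is a linear condition on the coefficients of $f$, defining a hyperplane $H_i\subset\Pro_\D$ whose tropicalization $\Tr(H_i)$ is a tropical hyperplane with apex determined by $q_i$. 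Since each $\overline{H_i}$ is a hyperplane of degree $1$ and $\Sev^{\circ}$ is dense in $\Sev$, for a generic configuration the $r$ conditions are complementary to $\Sev$ and
\[\mr{degree}(\Sev)=\#\bigl(\Sev\cap H_1\cap\cdots\cap H_r\bigr)=\bigl(\Tr(\Sev)\cdot\Tr(H_1)\cdots\Tr(H_r)\bigr),\]
the last equality by the defining property of the tropical product in \S\ref{sec:IT}.

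Next I would determine the support of this product. Computing it by a generic displacement, the transversal intersection points meet $\Tr(\Sev)$ only in the relative interiors of maximal cones; by Theorem \ref{thm:support} every such point is a c-vector $\w$ with $\mr{rank}(\w)=\mr{dim}(\Sev)$, so $\D_\w$ is simple-nodal and is dual to a tropical curve $\tau_\w$ of degree $\D$. The conditions cutting out $\bigcap_i\Tr(H_i)$ translate exactly into the requirement that $\tau_\w$ pass through every point of $\mc{P}$. The hypothesis that $\mc{P}$ is in $\D$-general position (Definition \ref{defn:GeneralPosition}) is precisely what forces this intersection to be $0$-dimensional and transversal, and hence sets up a bijection between the intersection points $\w$ and the tropical curves $\tau_\w$ of degree $\D$ through $\mc{P}$.

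The heart of the argument is the local multiplicity at each such $\w$. It factors as $\bsm_{\Sev}(\w)\cdot\xi$, where $\xi$ is the extrinsic (lattice-index) multiplicity of the two complementary affine spaces meeting at $\w$. Since $\w$ is regular, $\Tr(\Sev)$ is locally a translate of $\Tr(\mb{G}^e)$, so $\xi$ equals the constant-multiplicity intersection number $(\Tr(\mb{G}^e)\cdot\Tr(\mb{L}_{\mc{B}}))$ appearing in the proof of Theorem \ref{thm:No nonprimitive}, where $\mc{B}\subset\mr{Vertices}(\D_\w)$ is the coefficient set matching the $r$ point-conditions via Shustin's enumerations. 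Reusing the chain of equalities from that proof (and, when $\D_\w$ carries non-primitive parallelograms, the projection $\pi_{\D_\w}$ of Theorem \ref{thm:Existence of nonprimitive}) gives
\[\bsm_{\Sev}(\w)\cdot\xi=\bsm_{\Sev}(\w)\cdot(\Tr(\mb{G}^e)\cdot\Tr(\mb{L}_{\mc{B}}))=\prod 2\mr{area}(\mr{Triangles})=\mu(\w).\]
Summing over the intersection points then yields $\mr{degree}(\Sev)=\sum_\w\mu(\w)$, the sum running over tropical curves $\tau_\w$ of degree $\D$ through $\mc{P}$. Independence of the configuration is immediate, because the left-hand side is the intrinsic degree of a tropical cycle and does not depend on the generic displacement used to compute it.

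The step I expect to be the main obstacle is the second one: rigorously matching the purely combinatorial ``base-point condition'' of Definition \ref{defn:GeneralPosition} with the stable-intersection displacement, i.e.\ proving that $\D$-general position of $\mc{P}$ forces $\bigcap_i\Tr(H_i)$ to meet $\Tr(\Sev)$ transversally and only in maximal cones (so that Theorems \ref{thm:No nonprimitive} and \ref{thm:Existence of nonprimitive} apply verbatim), together with the bookkeeping that identifies the extrinsic index $\xi$ of the $r$ point-conditions with the quantity $(\Tr(\mb{G}^e)\cdot\Tr(\mb{L}_{\mc{B}}))$ computed for a coefficient set $\mc{B}$. By contrast, the multiplicity identity itself is essentially a repackaging of the enumerations already established.
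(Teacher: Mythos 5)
Your proposal is correct and follows essentially the same route as the paper: the paper proves the correspondence via Theorem \ref{thm:Severi degree}, which identifies the intersection points of $\Tr(\Sev)$ and $\Tr(\mc{L}(\bs{p}))$ (your $\bigcap_i\Tr(H_i)$) with tropical curves through $\mc{P}$ (using Lemma \ref{lem:point-condition}), computes the extrinsic multiplicity $\xi$ as $(\Tr(\mb{G}^{e*})\cdot\Tr(\mb{L}_{\mc{B}}))$ exactly as you do via the projection $\pi_{\D_\w}$ and Shustin's enumerations, and concludes $\bsm_{\Sev}(\w)\cdot\xi=\prod 2\mr{area}(\mr{Triangles})=\mu(\w)$. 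The one point you flag as the main obstacle (matching $\D$-general position with transversality and maximal rank) is handled in the paper partly by hypothesis and partly by the observation that the nonempty base-point condition forces $\mr{rank}(\w)\ge r$, with the reverse inequality coming from Shustin's characterization rather than from Theorem \ref{thm:support} alone.
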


First, we show that the set $\mc{S}$  of such
tropical curves described above is in one-to-one
correspondence with the set-theoretic transversal
intersection of two tropical varieties (Definition \ref{defn:linear space}),
$\Tr(\Sev)\cap\Tr(\mc{L}(\bs{p}))$.
Then, we show that Mikhalkin's multiplicity of
any curve in the set $\mc{S}$ is equal to the
tropical intersection multiplicity of the
corresponding point in the intersection
$\Tr(\Sev)\cap\Tr(\mc{L}(\bs{p}))$. Thus,
Mikhalkin's enumeration is equal to the
computation of the degree
$(\Tr(\Sev)\cdot\Tr(\mc{L}(\bs{p})))$.
 Let us begin with the definition of
$\mc{L}(\bs{p})$.
\begin{defn}\label{defn:linear space}
 Let $\bs{p}=\{p_1,\dots, p_{\zeta}\}\subset \TK^2$ be a finite set of points in $\TK^2$.
 Define $\mc{L}(\bs{p})\subset\Pro_{\D}(\K)$ to  be the parameter space of algebraic curves on
 the toric surface $X_{\D}(\K)$ passing through all the points in $\bs{p}$. This parameter space $\mc{L}(\bs{p})$ is
 the complete intersection of hyperplanes $\mc{H}_{p_j}\subset\Pro_{\D}(\K)$ defined by the condition of
  passing through the point $p_j,(j=1,\dots, \zeta).$ The intersection of $\mc{L}(\bs{p})$ with the big open torus $\T_{\D}$ is again denoted by $\mc{L}(\bs{p})$.
\end{defn}
\begin{lem}\label{lem:point-condition}
 An integral vector $\w$ is a c-vector of the hyperplane $\mc{H}_q$, i.e., $\w\in
 \Tr(\mc{H}_q)$,
 if and only if the tropical curve $\tau_{\w}$ passes through the point  $Val(q)=(Val(q_1), Val(q_2))$, where $q=(q_1,q_2)\in\TK^2$.
\end{lem}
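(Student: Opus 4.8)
The plan is to reduce both sides of the stated equivalence to one and the same combinatorial condition on the function $\w$, so that the lemma becomes a direct comparison of two descriptions of the same corner locus. There is no geometric difficulty here; the work is entirely in unwinding the definitions of \S\ref{sec:nodal curves} and Definition \ref{defns:initial} and reconciling the two roles that the data $\w$ and $q$ play.

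First I would make the hyperplane $\mc{H}_q$ explicit. A point of $\Pro_{\D}$ with coordinates $(z_a)_{a\in\D\cap\Z^2}$ corresponds to the Laurent polynomial $f=\sum_a z_ax^a$, and the associated curve passes through $q=(q_1,q_2)$ exactly when $f(q)=\sum_a q^a z_a=0$. Hence $\mc{H}_q\cap\T_{\D}$ is cut out by the single linear form $H_q=\sum_{a\in\D\cap\Z^2}q^az_a$, whose coefficient of the variable $z_a$ is $q^a\in\K$. Since $Val$ is a valuation, $Val(q^a)=a_1Val(q_1)+a_2Val(q_2)=a\cdot Val(q)$.

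Second, I would compute the c-vector condition from Definition \ref{defns:initial}. The monomial $z_a$ has exponent vector $e_a$ (the standard basis vector indexed by $a$), so $e_a\cdot\w=\w_a$, and the $t$-$\w$-degree of $H_q$ equals $\max_{a\in\D\cap\Z^2}\{\w_a+a\cdot Val(q)\}$. The initial form $\n\w H_q$ is the restriction of $H_q$ to the lattice points attaining this maximum, and because $\mc{H}_q$ is defined by a principal ideal the initial ideal is $\n\w I_{\mc{H}_q}=(\n\w H_q)$ (initial forms being multiplicative). Thus $\w\in\Tr(\mc{H}_q)$, i.e. $\n\w I_{\mc{H}_q}$ contains no monomial, precisely when $\max_a\{\w_a+a\cdot Val(q)\}$ is attained by at least two distinct $a$. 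On the other side, identifying $\w$ with the integral function $a\mapsto\w_a$ on $\D\cap\Z^2$, the tropical curve $\tau_\w$ is by definition the corner locus of $\alpha\mapsto\max_a\{a\cdot\alpha+\w_a\}$; evaluating at $\alpha=Val(q)$, the point $Val(q)$ lies on $\tau_\w$ exactly when $\max_a\{a\cdot Val(q)+\w_a\}$ is attained at least twice. These two conditions are literally the same, which closes the equivalence.

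The only point demanding care, and the place I expect a reader to stumble, is tracking the two distinct roles of the data. In the torus $\T_{\D}$ the variables are the coefficients $z_a$, so the weight $\w$ pairs with the basis vector $e_a$ and contributes $\w_a$, while the valuation of the coefficient $q^a$ contributes the linear term $a\cdot Val(q)$ --- and it is exactly this term that, in the tropical-curve picture, is the ``$a\cdot\alpha$'' evaluated at $\alpha=Val(q)$. Making this dictionary precise, together with noting the harmless invariance under $\w\mapsto\w+c(1,\dots,1)$ coming from the projective scaling (which changes neither the argmax in the corner locus nor the support of the initial form), is all that is required.
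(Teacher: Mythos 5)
Your proof is correct and follows essentially the same route as the paper: both write out the defining linear form $\sum_a q^a z_a$ of $\mc{H}_q$, use $Val(q^a)=a\cdot Val(q)$ to identify $\Tr(\mc{H}_q)$ with the corner locus of $(x_a)\mapsto\max_a\{x_a+a\cdot Val(q)\}$, and observe this coincides with the condition that $Val(q)$ lies on the corner locus defining $\tau_{\w}$. Your explicit unwinding of the ``maximum attained at least twice'' condition and the principal-ideal remark simply spell out what the paper leaves as ``the statement follows in a straightforward way.''
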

\begin{proof} Let $q_1=\alpha t^{m}+l.o.t.$ and $q_2=\beta t^n + l.o.t.$ and thus $Val(q)=(m,n)$. Now $\mc{H}_q$ is the
hyperplane defined by the linear polynomial
\[\sum_{a=(a_1,a_2)\in \D\cap\Z^2} (\alpha t^{m}
+ l.o.t.)^{a_1}(\beta t^{n} + l.o.t)^{a_2}
c_a=0\] in the variables $c_a, \quad
a\in\D\cap\Z^2$. Thus, the support of
$\Tr(\mc{H}_q)$ is the corner locus of the map:
 \[(x_a)_{a\in\D\cap\Z^2}\mapsto max_{a\in\D\cap\Z^2} \{x_a + (m,n)\cdot(a_1,a_2)\}\]
Also,  the tropical curve $\tau_{\w}$ is by
definition the corner locus of the map:
\[(X,Y)\mapsto max_{a\in\D\cap\Z^2}\{ \w_a + (X,Y)\cdot(a_1, a_2)\}\]
The statement follows in a straightforward way.
\end{proof}
From the Lemma \ref{lem:point-condition} above,
we see that $\w$ is a c-vector of
$\mc{L}(\bs{p})$ if and only if the tropical
curve $\tau_{\w}$ passes through all the points
$Val(p_1),\dots, Val(p_{\zeta})$ in $\Real^2$.
%

%
%
\begin{thm} \label{thm:Severi degree}Let
  $\bs{p}=\{p_1,\dots,p_r\}\in(\TK^2)^r$ be a configuration of  $r$ generic
   points in
  $\TK^2$ so that
$\bs{Val(p)}=\{Val(p_1),\dots,Val(p_r)\}\in(\Rational^2)^r$
 is in $\D$-general position, 
  $\mathrm{Trop}(\mc{L}(\bs{p}))\cap\Tr(\Sev)$
 is a transversal intersection and the linear system $\ref{independent}$ is independent,  where $r=\mr{dim}(\Sev)$.
Then the following statements hold true:
\begin{enumerate}
\item The intersection $\Tr(\mc{L}(\bs{p}))\cap\Tr(\Sev)$ is in
one-to-one correspondence with the set of
 tropical curves $\tau_{\w}$  passing
through all the points in $\bs{Val(p)}$.
\item The  extrinsic intersection multiplicity (\S 2.4. \ref{extrinsic})  of
$\mathrm{Trop}(\mc{L}(\bs{p}))$ and $\Tr(\Sev)$\\
at
$\w\in\mathrm{Trop}(\mc{L}(\bs{p}))\cap\Tr(\Sev)$
is
\[\xi(\w;\mathrm{Trop}(\mc{L}(\bs{p})),\Tr(\Sev))=\frac{\prod
2\mr{area}(\mr{Triangles})}{l(\mb{V})\cdot
\widetilde{\prod}\mr{length(\mr{Edges})}}.\]
\item The tropical intersection multiplicity (\S 2.4. \ref{tropical}) of $\mathrm{Trop}(\mc{L}(\bs{p}))$ and $\Tr(\Sev)$\\ at
$\w\in\mathrm{Trop}(\mc{L}(\bs{p}))\cap\Tr(\Sev)$
is equal to Mikhalkin's multiplicity of the
tropical curve $\tau_{\w}$:
\[\bsm(\w;\mathrm{Trop}(\mc{L}(\bs{p})),\Tr(\Sev))=
\prod 2\mr{area}(\mr{Triangles}).\]
\end{enumerate}
\end{thm}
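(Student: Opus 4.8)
The plan is to establish the three assertions in the order (1), (3), (2), since (2) is a formal consequence of (3) together with the weight formula already available for $\Tr(\Sev)$.

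For (1), I would combine Lemma \ref{lem:point-condition} with the description of $\Tr(\Sev)$. By that lemma, $\w\in\Tr(\mc{L}(\bs{p}))$ exactly when $\tau_{\w}$ passes through every $Val(p_j)$ in $\bs{Val(p)}$. If in addition $\w\in\Tr(\Sev)$, then $\w$ is a c-vector of $\Sev$; the $\D$-general position of $\bs{Val(p)}$ forces the intersection to meet $\Tr(\Sev)$ only in its regular locus, so $\w$ is a regular point and, by Shustin's characterization (Theorem \ref{thm:Shustin's characterization}), $\D_{\w}$ is simple-nodal with $\mr{rank}(\w)=r=\mr{dim}(\Sev)$. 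Thus the intersection points are precisely the tropical curves $\tau_{\w}$ of degree $\D$, simple-nodal of maximal rank, through $\bs{Val(p)}$ --- the very curves enumerated by Mikhalkin. Genericity makes $\w\mapsto\tau_{\w}$ injective and the transversality hypothesis makes the intersection zero-dimensional, giving the bijection.

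For (3), I would run the computation in the proof of Theorem \ref{thm:No nonprimitive} verbatim, with the coordinate subspace $\mb{L}_{\mc{B}}$ replaced by the point-condition space $\mc{L}(\bs{p})$. Since $\mc{L}(\bs{p})$ is a complete intersection of hyperplanes, its tropicalization is a tropical linear space of weight one, so $\bsm_{\Tr(\mc{L}(\bs{p}))}(\w)=1$; and $\bsm_{\Tr(\Sev)}(\w)=l(\mb{V})\cdot\widetilde{\prod}\mr{length(\mr{Edges}(\D_{\w}))}$ by Theorems \ref{thm:No nonprimitive} and \ref{thm:Existence of nonprimitive}. As $\n\w\Sev$ is $\bsm_{\Sev}(\w)$ translates of $\mb{G}^e$, one has $\Tr(\n\w\Sev)=\bsm_{\Sev}(\w)\cdot\Tr(\mb{G}^e)$, and the independence hypothesis guarantees that cutting $\n\w\Sev$ by $\mc{L}(\bs{p})$ amounts to prescribing leading coefficients at a vertex set $\mc{B}$. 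Shustin's second enumeration (\ref{second}) then gives
\[\bsm_{\Sev}(\w)\,(\Tr(\mb{G}^e)\cdot\Tr(\mc{L}(\bs{p}))) = (\Tr(\n\w\Sev)\cdot\Tr(\mc{L}(\bs{p}))) = \prod 2\mr{area}(\mr{Triangles}).\]
Since $\xi(\w)=(\Tr(\mb{G}^e)\cdot\Tr(\mc{L}(\bs{p})))$ by the volume interpretation of the product of two weight-one linear cycles (\S\ref{sec:IT}), the tropical multiplicity is $\bsm(\w)=1\cdot\bsm_{\Sev}(\w)\cdot\xi(\w)=\prod 2\mr{area}(\mr{Triangles})$, which is (3).

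Statement (2) is then immediate: dividing the multiplicity relation $\bsm(\w)=\bsm_{\Tr(\mc{L}(\bs{p}))}(\w)\cdot\bsm_{\Tr(\Sev)}(\w)\cdot\xi(\w)$ yields
\[\xi(\w;\Tr(\mc{L}(\bs{p})),\Tr(\Sev))=\frac{\prod 2\mr{area}(\mr{Triangles})}{l(\mb{V})\cdot\widetilde{\prod}\mr{length(\mr{Edges})}}.\]
The main obstacle I anticipate is the identification of the abstract lattice-theoretic multiplicity $\bsm(\w)$ with Shustin's patchworking count: one must verify that the generic valuations $\bs{Val(p)}$ realize the generic translate implicit in the tropical intersection degree, so that the local count of nodal curves through $\bs{p}$ agrees with the $(A,F,R)$-enumeration, each datum yielding a unique $f\in\Sev(\K)$. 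When $\D_{\w}$ contains non-primitive parallelograms, $\mb{V}$ lies in a proper coordinate subspace; exactly as in Theorem \ref{thm:Existence of nonprimitive} one passes to the projection $\pi_{\D_{\w}}$ and uses $\mb{G}^{e*}$, and the determinant comparison given there shows that adjoining the block of the annihilated special coordinates leaves the intersection number, hence the formula for $\xi(\w)$, unchanged.
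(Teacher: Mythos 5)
Your proposal is correct and is assembled from the same ingredients as the paper's proof --- Lemma \ref{lem:point-condition} plus Theorem \ref{thm:Shustin's characterization} for statement (1), the local linearity of $\Tr(\Sev)$ near $\w$ from Theorem \ref{thm:Existence of nonprimitive}, Shustin's enumerations, and the product formula $\bsm=\bsm_{\mc{L}(\bs{p})}\cdot\bsm_{\Sev}\cdot\xi$ --- but you derive (2) and (3) in the opposite order. The paper proves (2) first: it identifies, near $\w$, the support of $\Tr(\mc{L}(\bs{p}))$ with $\Tr(\mb{L}_{\mc{B}})$ and the support of $\Tr(\Sev)$ with $\Tr(\mb{G}^{e*})$, computes $\xi(\w)=\Tr(\mb{G}^{e*})\cdot\Tr(\mb{L}_{\mc{B}})$ via the projection-determinant argument and Shustin's first enumeration (\ref{first}), and then gets (3) by multiplying. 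You instead extract $\bsm_{\Sev}(\w)\cdot\xi(\w)=(\Tr(\n\w\Sev)\cdot\Tr(\mc{L}(\bs{p})))=\prod 2\mr{area}(\mr{Triangles})$ in one stroke from the second enumeration (\ref{second}), and recover (2) by division using the weight formula of Theorems \ref{thm:No nonprimitive} and \ref{thm:Existence of nonprimitive}. Both orders are legitimate; yours trades the explicit lattice-volume computation for one extra appeal to the already-established weight formula.

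The one step you must flesh out is the assertion that ``the independence hypothesis guarantees that cutting $\n\w\Sev$ by $\mc{L}(\bs{p})$ amounts to prescribing leading coefficients at a vertex set $\mc{B}$.'' This is precisely where the paper does its real work, and it cannot be taken for granted: as summarized in \S\ref{sec:Patchworking}, Shustin's enumerations fix coefficients at vertices of $\D_{\w}$, not point conditions, so pairing Enumeration 2 against $\Tr(\mc{L}(\bs{p}))$ is unjustified until one shows that near $\w$ the support of $\Tr(\mc{L}(\bs{p}))$ coincides with the coordinate-type subspace $\Tr(\mb{L}_{\mc{B}})$. The paper's mechanism is: $\D$-general position forces $\mr{rank}(\w)\ge r$, hence $\mr{rank}(\w)=r$; consequently the $r$ points $Val(p_i)$ lie in the relative interiors of $r$ distinct edges of $\tau_{\w}$; each point, dually, selects an edge $\sigma_i$ of $\D_{\w}$ with endpoints $a_i,a_i'$ and imposes on nearby elements of $\Tr(\mc{L}(\bs{p}))$ exactly the linear relation (\ref{independent}); independence of these relations pins down the values on $\mc{B}$ and leaves the remaining coordinates free. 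Once this identification is in place, your argument closes, including the non-primitive-parallelogram case, where your passage to $\mb{G}^{e*}$ via $\pi_{\D_{\w}}$ and the determinant comparison agrees with the paper.
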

\begin{proof} Let us prove the first statement:
 Since the set of  c-vectors are open dense in the tropicalization of a variety, we
can assume that the  intersection points in
$\Tr(\mc{L}(\bs{p}))\cap\Tr(\Sev)$ are all
c-vectors, in particular, they are all {\em
rational} vectors. From  Lemma
\ref{lem:point-condition} and Shustin's
combinatorial characterization Theorem
\ref{thm:Shustin's characterization}, the first
statement follows.

To prove the second statement,
 we find  neighborhoods of $\w$ in $\Tr(\Sev)$ and in
$\mathrm{Trop}(\mc{L}(\bs{p}))$.
Then we compute the volume of the corresponding
principal parallelepiped, which is by definition
the extrinsic intersection multiplicity of
$\mathrm{Trop}(\mc{L}(\bs{p}))$ and $\Tr(\Sev)$
at $\w$ (\S 2.4. \ref{extrinsic}).
 Since the points are in $\D$-general
 position and the base-point-condition cuts out the cone $\mc{K}(\TSD_{\w})$ non-empty  ($ cc(\w)_{\Z}\in\mc{K}(\TSD_{\w})$),
 the rank of $\w$  must be at least
 $r=\mr{dim}(\Sev)$ and so is equal to $r$, since $\w$ is a c-vector of
 $\Sev$. Also $\w$ is a regular point of both $\Tr(\Sev)$ and
 $\Tr(\mc{L}(\bs{p}))$, being a traversal-intersection-point.
Thus, by Theorem \ref{thm:Existence of
nonprimitive}, near $\w$, the support of
$\Tr(\Sev)$ is equal to
$\Tr(\mb{G}^{e*})$, which is a $r$-dimensional  linear space. 
Now let us consider a full-dimensional 
neighborhood of $\w$ in
$\mr{Trop}(\mc{L}(\bs{p}))$. The tropical curves
corresponding to points in such neighborhood are
tropical curves passing through the $r$ points
$Val(p_1),\dots,Val(p_r)$ and their degrees are
subsets of $\D$. In particular, the tropical
curve $\tau_{\w}$ also passes through the points.
Since $\mr{rank}(\w)=r$,  they lie  on $r$ distinct edges of
$\tau_{\w}$ which correspond to some $r$ edges of
the subdivision $\D_{\w}$. If $\sigma_i\in
\mr{Edges}(\D_{\w})$ correspond to a point $Val(p_i)$
and $a_i, a_i'$ are the endpoints of
$\sigma_i,\quad 1\le i\le r$, then we have the
following linear conditions on $\w(a_i)$ and
$\w(a_i')$:
\[\label{independent}\w(a_i)-\w(a_i')=(a_i'-a_i)\cdot Val(p_i), \quad(i=1,\dots,r.)\]
 Let $\mc{B}$ be the set of vertices
of $\D_{\w}$ which are involved in this independent linear
system. Thus the values of $\w$ on $\mc{B}$ are
fixed and ones on $(\D\cap\Z^2)\setminus\mc{B}$
can be any values. Thus we can see that near
$\w$, the support of $\Tr(\mc{L}(\bs{p}))$ is
equal to $\Tr(\mb{L}_{\mc{B}})$, where
$\mb{L}_{\mc{B}}$ is defined in the proof of
Theorem \ref{thm:No nonprimitive}.
Note that $\Tr(\mb{G}^{e*})$ and
$\Tr(\mb{L}_{\mc{B}})$ have constant weighting
function $1$. Thus we can compute the extrinsic
intersection multiplicity of
$\mathrm{Trop}(\mc{L}(\bs{p}))$ and $\Tr(\Sev)$
at $\w$ as follows:
\[\begin{array}{rcl}
\xi(\w;\mathrm{Trop}(\mc{L}(\bs{p})),\Tr(\Sev))&=&\Tr(\mb{G}^{e*})\cdot\Tr(\mb{L}_{\mc{B}})\\
&=&\Tr(\mb{G}^e)\cdot\pi_*(\Tr(\mb{L}_{\mc{B}}))\\
&=&\frac {\Tr(\mb{V})\cdot\pi_*(\Tr(\mb{L}_{\mc{B}}))}{l(\mb{V})}\\
&=&\frac{\prod
2\mr{area}(\mr{Triangles})}{\tilde{\prod}length(\mr{Edges}(\D_{\w}))l(\mb{V})}
\end{array}\]
Now we prove the last statement: it follows from
the definition of $\bsm(\w)$ given in \S
\ref{sec:IT}:
\[\begin{array}{l}
\bsm(\w;\mathrm{Trop}(\mc{L}(\bs{p})),\Tr(\Sev))\\
=\bsm_{\mc{L}(\bs{p})}(\w)\cdot\bsm_{\Sev}(\w)\cdot\xi(\w;\mathrm{Trop}(\mc{L}(\bs{p})),\Tr(\Sev))
\\=1\cdot l(\mb{V})\cdot \widetilde{\prod}\mr{length(\mr{Edges})}\cdot
\frac{\prod
2\mr{area}(\mr{Triangles})}{\tilde{\prod}length(\mr{Edges}(\D_{\w}))l(\mb{V})}\\
=\prod 2\mr{area}(\mr{Triangles})
\end{array}\]
\end{proof}
Therefore from the Theorem \ref{thm:Severi
degree},  Mikhalkin's enumeration of tropical
curves is equal to the computation of the degree
$(\Tr(\mc{L}(\bs{p})\cdot\Tr(\Sev))$.

\addcontentsline{toc}{chapter}{Bibliography}
\bibliographystyle{plain}

\begin{thebibliography}{99}

\bibitem{AR}L. Allermann, J. Rau,
 First steps in tropical intersection
theory.  {\em Math. Z.} \textbf{264} (2010), no. 3, 633-670.

\bibitem{Bernstein}D. Bernstein,  The number of roots of a system of equations. {\em Functional Analysis and its Applications} \textbf{
9} (1975) 183-185.

\bibitem {BG} R. Bieri, J. Groves,  The geometry of the set
of characters induced by valuations. {\em J.Reine
Angew. Math.} {\bf 347} (1984), 168-195.

\bibitem{EKL} M. Einsiedler, M. Kapranov, D. Lind, 
Non-Archimedean amoebas and tropical varieties.
{\em J. Reine Angew. Math.} {\bf 601} (2006), 139-157

\bibitem {Eisenbud}D. Eisenbud,  Commutative Algebra
with a View Towards Algebraic Geometry. {\em Graduate
Texts in Mathematics}, Springer, New York, 1995.

\bibitem{Enriques} F. Enriques,
 Sui moduli d'una classe di superficie e sul
teorema d'esistenza per funzioni algebriche di
due variabilis.  {\em Atti Accad. Sci. Torino}, {\bf 47}, 1912.

\bibitem{Fulton1}W. Fulton,  Introduction
to toric varieties. {\em Ann. Math. Studies} {\bf 131}, Princeton
Univ. Press, Princeton N.J., 1993.

\bibitem{FS}W. Fulton,  B. Sturmfels, 
Intersection theory on toric varieties. {\em Topology} {\bf 36} (1997) 335-353.

\bibitem{GM}Andreas Gathmann, Hannah Markwig,
 The numbers of tropical plane curves through
points in general position. {\em J. Reine Angew.
Math.} {\bf 602} (2007), 155-177.

\bibitem{GKZ}I. M. Gelfand, M. M. Kapranov,
A. V. Zelevinsky,  Discriminants, Resultants,
and Multidimensional Determinants. {\em Birkhauser},
Boston 1994.

\bibitem{IMS}I. Itenberg, G. Mikhalkin, E. Shustin,
 Tropical algebraic geometry. {\em Oberwolfach Seminars}, {\bf 35} Birkhauser Verlag, Basel, 2007. viii+103 pp.

\bibitem{Katz2}E.Katz,  A tropical toolkit.
{\em Expo. Math.} {\bf 27} (2009), no. 1, 1-36.

\bibitem{Kazarnovskii1}B. Ya.Kazarnovskii, 
Truncations of systems of equations, ideals and varieties.
 (Russian) {\em Izv. Ross. Akad. Nauk Ser. Mat.} {\bf 63} (1999),
 no. 3, 119-132; translation in {\em Izv. Math.} {\bf 63} (1999), no. 3, 535-547

\bibitem{Kazarnovskii2}B. Ya.Kazarnovskii, 
c-fans and Newton polyhedra of algebraic varieties. (Russian)
 {\em Izv. Ross. Akad. Nauk Ser. Mat.} {\bf 67} (2003), no. 3, 23-44; translation in {\em Izv. Math.}{\bf 67} (2003), no. 3, 439--460

\bibitem{Kazarnovskii3}B. Ya.Kazarnovskii, 
Multiplicative intersection theory and complex
tropical varieties.
(Russian) {\em Izv. Ross. Akad. Nauk Ser. Mat. }  {\bf 71}
(2007), no. 4, 19-68; translation in {\em Izv. Math. }  { \bf 
71} (2007), no. 4, 673-720


\bibitem{MS}Diane Maclagan, Bernd Sturmfels, 
Introduction to Tropical Geometry. preprint
(textbook in progress), 2009

\bibitem{Mikhalkin}G. Mikhalkin,  Enumerative
tropical algebraic geometry in $\Real^2$. {\em J. Amer.Math.
Soc.} {\bf 18} (2005), 313-377. 

\bibitem{MikhalkinICM}G. Mikhalkin,  Tropical
Geometry and its applications.
 {\em International Congress of Mathematicians.} {\bf  Vol.
II}, 827-852, Eur. Math. Soc., Z\"urich, 2006.

\bibitem{Payne}Sam Payne,  Fibers of
tropicalization. {\em Math. Z.} {\bf 262} (2009), no. 2,
301-311

\bibitem{Severi} F. Severi, 
Vorlesungen \"uber Algebraische Geometrie. {\em Teubner, Leipzig,} 1921.

\bibitem{Shustin1}E. Shustin, 
Gluing of singular and critical points. {\em Topology }  {\bf 37} (1998), no. 1,
195.217.

\bibitem {Shustin2}E. Shustin,  A tropical
approach to enumerative geometry. {\em Algebra i Analiz} 
{\bf 17} (2005), no. 2, 170.214 (English translation:
{\em St. Petersburg Math. J.} {\bf 17} (2006), 343-375

\bibitem{Shustin3}E. Shustin,  Patchworking singular algebraic curves, non-Archimedean amoebas and enumerative
geometry. Preprint arXiv: math.AG/0211278.



\bibitem {ST}B. Sturmfels, J. Tevelev,
 Elimination theory for tropical varieties. {\em Math. Res. Lett.} {\bf 15} (2008), no. 3, 543-562.


\end{thebibliography}

\end{document}